\newtheorem{lemma}{Lemma}[section]
\newtheorem{theorem}[lemma]{Theorem}
\newtheorem{proposition}[lemma]{Proposition}
\newtheorem{remark}[lemma]{Remark}
\newtheorem{definition}[lemma]{Definition}
\newtheorem{definitions}[lemma]{Definitions}
\newcommand{\C}{{\mathcal{C}}}
\newcommand{\Ker}{{\rm{Ker}}}
\newcommand{\dsum}{\sum}
\newcommand{\dbigcup}{\bigcup}
\newcommand{\N}{{\mathbb{N}}}
\newcommand{\K}{{\mathbb{K}}}
\newcommand{\uloopr}[1]{\ar@'{@+{[0,0]+(-4,5)}@+{[0,0]+(0,10)}@+{[0,0] +(4,5)}}^{#1}}
\definecolor{turquoise2}{rgb}{0,0.898039,0.933333}
\definecolor{magenta}{rgb}{1,0,1}
\definecolor{olivedrab}{rgb}{0.419608,0.556863,0.137255}
\definecolor{purple2}{rgb}{0.568627,0.172549,0.933333}
\definecolor{amethyst}{rgb}{0.6, 0.4, 0.8}
\definecolor{ao(english)}{rgb}{0.0, 0.5, 0.0}
\definecolor{atomictangerine}{rgb}{1.0, 0.6, 0.4}
\definecolor{amber(sae/ece)}{rgb}{1.0, 0.49, 0.0}
\definecolor{alizarin}{rgb}{0.82, 0.1, 0.26}
\definecolor{auburn}{rgb}{0.43, 0.21, 0.1}
\definecolor{aqua}{rgb}{0.0, 1.0, 1.0}
\begin{document}

\subjclass[2010]{Primary {17D92, 17A60}} \keywords{{Genetic algebra}, evolution algebra, {annihilator}, {extension property}}

\title[Description of three-dimensional evolution algebras]{Description of three-dimensional evolution algebras}

\author[Y. Cabrera]{Yolanda Cabrera Casado}
\address{Y. Cabrera Casado: Departamento de \'Algebra Geometr\'{\i}a y Topolog\'{\i}a, Fa\-cultad de Ciencias, Universidad de M\'alaga, Campus de Teatinos s/n. 29071 M\'alaga.   Spain.}
\email{yolandacc@uma.es}

\author[M. Siles ]{Mercedes Siles Molina}
\address{M. Siles Molina: Departamento de \'Algebra Geometr\'{\i}a y Topolog\'{\i}a, Fa\-cultad de Ciencias, Universidad de M\'alaga, Campus de Teatinos s/n. 29071 M\'alaga.   Spain.}
\email{msilesm@uma.es}

\author[M. V. Velasco]{M. Victoria Velasco}
\address{M. V. Velasco:  Departamento de An\'{a}lisis Matem\'{a}tico, Universidad de Granada, 18071 Granada, Spain.}
\email{vvelasco@ugr.es}

\begin{abstract}
We classify  three dimensional evolution algebras over a field having characteristic different from 2 and in which there are roots of orders 2, 3 and 7.
\end{abstract}
\maketitle

\section{Introduction}
{The use of non-associative algebras to formulate Mendel's laws was started by Etherington in his papers \cite{Ethe1, Ethe2}. Other genetic algebras (those that model inheritance in genetics) called evolution algebras emerged to study non-Mendelian genetics.} Its theory in the finite-dimensional case was introduced by Tian in \cite{Tian}. The systematic study of evolution algebras of arbitrary dimension and of their algebraic properties was started in \cite{CSV1}, where the authors analyze evolution subalgebras, ideals, non-degeneracy, simple evolution algebras {and} irreducible evolution algebras. The aim of this paper is to obtain the classification of three-dimensional evolution algebras {having in mind to apply  this classification in a near future in a biological setting and to detect possible tools to implement in wider classifications}.

Two-dimensional evolution algebras over the complex numbers were {determined} in \cite{evo9}, although we have found that this classification is incomplete: {the algebra  $A$ with natural basis $\{e_1, e_2\}$ such that $e_1^2 =e_2$ and $e_2^2=e_1$} is a two-dimensional evolution algebra not isomorphic to any of the six types in \cite{evo9}. We realized of this fact when classifying the three-dimensional evolution algebras $A$ such that ${\rm dim}(A^2)=2$ and having annihilator\footnote{{The annihilator of $A$, ${\rm ann}(A)$, is defined as the set of those elements $x$ in $A$ such that $xA=0$.}} of dimension 1  {(see Tables 14-16)}.

The three dimensional case is much more complicated, as can be seen in this work, were it is proved that there are {116} types of three-dimensional evolution algebras. All of them  {are classified}  in Tables $1$-$24$.  {The} matrices  {appearing in different tables are not isomorphic (in the meaning that they do not generate the same evolution algebra). Matrices in different rows  of a same table neither are isomorphic. In} general, different values of the parameter for matrices in the same row give non-isomorphic evolution algebras, but in some case this is not true. These cases are displayed in Tables {$2'$}-$23'$.

Just after finishing this paper we found the article \cite{EL16}, where one of the aims of the authors is to classify indecomposable\footnote{Irreducible following \cite{CSV1}.} nilpotent evolution algebras up to dimension five over algebraically closed fields of characteristic not two. The three-dimensional {ones can be localized} in our classification {and for these}, it is not necessary to consider algebraically closed fields.

In this paper we deal with evolution algebras over a field $\K$ of characteristic different from 2 and in which every polynomial of the form $x^n-\alpha$, for $n=2, 3, 7$ and $\alpha\in \K$  has a root in the field. We denote by ${\phi}$ a seventh root of the unit and by $\zeta$ a third root of the unit.

\medskip
In Section \ref{ProductAndChangeOfBasis} we introduce the essential definitions. For every arbitrary finite dimensional algebra, fix a basis $B=\{e_i \ \vert \ i=1, \dots, n\}$. The product of this algebra, relative to the  basis  $B$ is determined by the matrices of the multiplication operators, $M_B(\lambda_{e_i})$ (see \eqref{productoEi}). The relationship under change of basis is also established. In the particular case of evolution algebras Theorem \ref{teor} shows this {connection}.

We start Section \ref{threedim} by analyzing the action of the group $S_3\rtimes (\K^\times)^3$ on $\mathcal M_3(\K)$. The orbits of this action will completely determine the non-isomorphic evolution algebras $A$ when ${\rm dim} (A^2)=3$ and in some cases when ${\rm dim} (A^2)=2$.

We have divided our study into four cases depending on the dimension of $A^2$, which can be 0, 1, 2 or 3. The first case is trivial. The study of the third and of the fourth ones is made by taking into account which are the possible matrices $P$ that appear as change of basis matrices. It happens that for dimension 3,  as we have said, the only matrices are those in $S_3\rtimes (\K^\times)^3$.

When the dimension of $A^2$ is 2, there exists three groups of cases (four in fact, but two of them are essentially the same). {Let $B=\{e_1, e_2, e_3\}$ be a natural basis of $A$ such that $\{e_1^2, e_2^2\}$ is a basis of $A^2$ and $e_3^2=c_1e_1^2+c_2e_2^2$ for some $c_1, c_2\in \K$}. The first   {case happens when $c_1c_2\neq 0$. Then,} $P\in S_3 \rtimes (\K^{\times})^3$. The second group of cases arises when {$c_1= 0$ and $c_2\neq 0$. Then,} the matrix $P$ is ${\rm id}_3$, $(2, 3)$,\footnote{{The matrix obtained from the identity matrix, ${\rm id}_3$, when exchanging the second and the third rows}} or the matrix $Q$ given in Case 2  {(when ${\rm dim}(A^2=2)$)}. The third one appears when {case happens when $c_1, c_2= 0$. In this case} the matrix $P$ is ${\rm id}_3$ or the matrices $Q'$ and $Q''$ given in Case 4  {(when ${\rm dim}(A^2=2)$)}.

For $P\in S_3 \rtimes (\K^{\times})^3$,  we classify taking into account: the dimension of the annihilator of $A$,  the number of non-zero entries  {in the structure matrix (which remains invariant, as it is proved in Proposition \ref{prop:numerodeceros})}, and if the algebra $A$ satisfies Property (2LI)\footnote{{For any basis $\{e_1, e_2, e_3\}$ the ideal $A^2$ has dimension two and it is generated by $\{e_i^2, e_j^2\}$, for every $i, j\in \{1, 2, 3\}$ with $i\neq j$.
}}.

For $P\in \{{\rm id}_3, (2, 3), Q\}$, we obtain a first classification, given in the different Figures. Then we compare which {matrices produce isomorphic algebras and eliminating redundancies we}  get the matrices given in  {the set $S$ that appears in Theorem \ref{thm:clasificacion}}. Again, some of these matrices give isomorphic evolution algebras. In order to classify them, we take into account that the number of non-zero entries of the matrices in {S} remains invariant under the action of the matrix $P$ (see Remark \ref{CerosInvariantesEnS}). Note that the resulting matrices correspond to evolution algebras with zero annihilator and do not satisfy Property (2LI).

For $P\in  \{{\rm id}_3, Q', Q''\}$ we classify taking into account that the third column of the structure matrix has three zero entries (the dimension of the annihilator is one and, consequently, they do not satisfy Property (2LI)) and the number of zeros in the first and the second row remains invariant under change of basis matrices (see Remark \ref{EntradasNoNulasEnFilas}).

For ${\rm dim}(A^2)=3$ we classify by the number of non-zero entries in the structure matrix.

In the case ${\rm dim} (A^2)=1$ it is not efficient to tackle the problem of the classification by obtaining the possible change of basis matrices, although {for completeness} we have determined them in Appendix \ref{apendice}. This is because we follow a different pattern. The key point for this study will be the extension property {\footnote{{There is a natural basis of $A^2$ that can be enlarged to a natural basis of $A$}} ((EP) for short).}
We have classified  taking into account the following properties: {whether or not} $A^2$ has the extension property, the dimension of the annihilator of $A$, and {whether or not} the evolution algebra $A$ has a principal\footnote{Principal means that it is generated as an ideal by one element.} two-dimensional evolution ideal which is degenerate\footnote{{An evolution algebra is non-degenerate if $e^2\neq0$ for any element $e$ in any basis (see \cite[Definition 2.16 and Corollary 2.19]{CSV1})}. Otherwise we say that it is degenerate.} as an evolution algebra (PD2EI for short).

The classification of three-dimensional evolution algebras is achieved in Theorem \ref{thm:clasificacion}. We summarize {the cases} in the tables that follow.

\medskip

\begin{center}
\scalebox{0.6}
{
\begin{tabular}{|c|c|c|c|}
\hline
&&&\\
 \rm{$A^2$ has EP}& $\rm{dim}({\rm ann}(A))$ & \rm{$A$ has  a PD2EI} & \rm{Number}\\
 &&&\\
\hline
 No & 0  & Yes & 1 \\
\hline
 No & 1  & Yes & 1 \\
\hline
 Yes & 2  & No & 1 \\
\hline
 Yes & 1  & No & 1 \\
\hline
 Yes & 0  & No & 1 \\
\hline
 Yes & 2  & Yes & 1 \\
\hline
 Yes & 1  & Yes & 1 \\
\hline
\end{tabular}
}

{\footnotesize {${\rm dim}(A^2)=1$}}
\end{center}

\medskip

\begin{center}

\scalebox{0.7}{

\begin{tabular}{|c|c|c|c|}

\hline
&&&\\
 & \rm{Non-zero entries}&  &\\
  \rm{dim(ann(A))} & * \rm{Non-zero entries in} $S$ & \rm{A has Property (2LI)} & \rm{Number}\\
&** \rm{Non-zero entries in rows 1 and 2} & & \\
&&&\\
\hline
 1& 1** & No  &  2 \\
\hline
 1& 2** & No  &  4 \\
\hline
 1& 3** & No  &  2 \\
\hline
 1& 4** & No  &  {2} \\
\hline
 0& 4* & No  &  3 \\
\hline
 0& 5* & No  &  6 \\
\hline
 0& 6* & No  &  3 \\
\hline
 0& 7* & No  &  6 \\
\hline
 0& 8* & No  &  3 \\
\hline
0 & 9* & No  & 3  \\
\hline
 0& 4 & Yes  & 4 \\
\hline
 0& 5 & Yes  & 3 \\
\hline
 0& 6 & Yes  & 7 \\
\hline
 0& 7 & Yes  & 6 \\
 \hline
 0& 8 & Yes  & 2 \\
 \hline
 0& 9 & Yes  & 1 \\
\hline
\end{tabular}}

{\footnotesize{{${\rm dim}(A^2)=2$}}}\label{tabla26}
\end{center}

\medskip

\begin{center}
\scalebox{0.7}{

\begin{tabular}{|c|c|}
\hline
&\\
  \rm{Non-zero entries} & \rm{Number}\\
   & \\
\hline
  3  & 3  \\
\hline
  4  & 6  \\
\hline
  5  & 16  \\
\hline
  6  & 15 \\
\hline
  7   & 8 \\
\hline
  8    & 2 \\
\hline
  9    & 1 \\
\hline
\end{tabular}}

{\footnotesize ${\rm dim}(A^2)=3$}\label{tabla27}

\end{center}

%
%
\section{Product and change of basis}\label{ProductAndChangeOfBasis}
%
%

In this section we study the product in an arbitrary algebra by considering the matrices associated to the product by any element in a fixed basis. We specialize to the case of evolution algebras and obtain the relationship for two structure matrices of the same evolution algebra relative to different basis.

\subsection{The product of an algebra}\label{productoAlgebra}

Let $A$ be a $\K$-algebra. Assume that  $B=\{e_{i} \ \vert \ i\in \Lambda \}$ is a basis of $A$, and let $\{\omega_{kij} \}_{i,j,k \in \Lambda} \subseteq \mathbb{K}$ be the \emph{structure constants}, i.e. $e_{i}e_{j}=\displaystyle\sum_{k\in \Lambda}\omega_{kij}e_{k}$ and $\omega_{kij}$ is zero for almost all $k$. Since in this paper we will deal only with finite dimensional evolution algebras, we will assume that $\Lambda$ is finite and has cardinal $n$.

 For any element $a\in A$ the following map defines the \emph{left multiplication operator} by $a$, denoted as ${\lambda}_a$:
$$
\begin{matrix}
{\lambda}_a: & A & \to A\\
& x & \mapsto ax
\end{matrix}
$$
Then, for every $i\in \Lambda$ we have
$$ M_{B}({\lambda}_{e_{i}})=
\left(
  \begin{array}{cccc}
    \omega_{1i1} &  \cdots & \omega_{1in} \\
    \vdots &  \ddots & \vdots \\
    \omega_{ni1} &  \cdots & \omega_{nin} \\
     \end{array}
\right),$$
\noindent
where for any linear map $T: A \to A$ we write $M_{B}( T)$ to denote the matrix in $\mathcal M_\Lambda(\K)$ associated to  $T$ relative to the basis $B$.
\medskip

 Let $A$ be an algebra and let $B=\{e_{i} \ \vert \ i\in \Lambda \}$ be a basis of $A$. For arbitrary elements $x=\sum_{i \in \Lambda}{\alpha}_{i}e_{i}$ and $y=\sum_{i \in \Lambda}{\beta}_{i}e_{i}$ in $A$ the product $xy$ is as follows:

$xy=\left(\displaystyle\sum_{i \in \Lambda}\alpha _{i}e_{i}\right)\left(\displaystyle\sum_{j \in \Lambda}\beta _{j}e_{j}\right)$ $=$ $\displaystyle\sum_{i,j \in \Lambda}$ $\alpha _{i}\beta _{j}$ $e_{i}e_{j}$ $=$ $\displaystyle\sum_{i,j \in \Lambda}$ $\left(\alpha _{i}\beta
_{j}\displaystyle\sum_{k \in \Lambda}\omega_{kij}e_{k}\right)$ $=$
$\displaystyle\sum_{k,i,j \in \Lambda}\alpha _{i}{\beta}_{j}\omega_{kij}e_{k}.$  \\ \\

Denote by  $\xi_{B}(x)$ the coordinates of an element $x$ in $A$ relative to the basis $B$, written by columns. Then:
\begin{eqnarray*}
\xi_{B}(xy)={\xi_{B}\left(\displaystyle\sum_{k,i,j \in \Lambda}\alpha _{i}{\beta}_{j}\omega_{kij}e_k\right) }& = &
 \begin{pmatrix}
     \omega_{111} &  \cdots & \omega_{11n} \\
     \vdots &  \ddots & \vdots \\
    \omega_{n11} & \cdots & \omega_{n1n} \\
         \end{pmatrix}
  \begin{pmatrix}
    {\alpha}_{1}{\beta}_{1} \\
     \vdots \\
     {\alpha}_{1}{\beta}_{n} \\
     \end{pmatrix} \\
         &  & \\
     & + & \cdots \\
           & + & \begin{pmatrix}
    \omega_{1n1}  & \cdots & \omega_{1nn} \\
    \vdots & \ddots & \vdots \\
    \omega_{nn1}  & \cdots & \omega_{nnn} \\
         \end{pmatrix}
  \begin{pmatrix}
     {\alpha}_{n}{\beta}_{1} \\
      \vdots \\
       {\alpha}_{n}{\beta}_{n} \\
     \end{pmatrix}.
     \end{eqnarray*}
That is,
\begin{equation}\label{productoEi}
\xi_{B}(xy)=\displaystyle\sum_{i \in \Lambda} M_{B}({\lambda}_{e_{i}})
\begin{pmatrix}
{\alpha}_{i}{\beta}_{1} \\
 \vdots \\
 {\alpha}_{i}{\beta}_{n}\\
 \end{pmatrix}.
 \end{equation}

 \medskip

An \emph{evolution algebra} over a field $\mathbb K$ is a $\mathbb K$-algebra $A$ provided with
a basis $B=\{e_{i} \ \vert \ i\in \Lambda \}$ such that $e_{i}e_{j}=0$
whenever $i\neq j$.
 Such a basis $B$ is called a \emph{natural basis}. Now, the structure constants of $A$ relative to $B$ are the scalars $\omega _{ki}\in \mathbb K$ such that
 $e_{i}^{2}:=e_ie_i=\sum\limits_{k\in \Lambda} \omega_{ki}e_{k}$. The matrix $M_B:= \left(\omega_{ki}\right)$ is said to be the \emph{structure matrix of} $A$ \emph{relative to} $B$.
\medskip

For any finite dimensional evolution algebra $A$ with a natural basis $B$  we have
 $$M_{B}= \sum\limits_{i \in \Lambda}M_{B}({\lambda}_{e_
{i}}).$$

In case of $A$ being an evolution algebra and $B=\{e_{i} \ \vert \ i\in \Lambda \}$ a natural basis of $A$,  the structure constants satisfy that  $\omega_{kij}=0$ for every $i,j,k \in \Lambda $ with $ i \neq j$. If we denote $\omega_{kii}=\omega_{ki}$ we obtain that:
\begin{eqnarray*}\label{noseusa}
\xi_{B}(xy) & = &
\begin{pmatrix}
\omega_{11} & 0 & \cdots & 0 \\
\vdots & \vdots & \ddots & \vdots\\
\omega_{n1} & 0 & \cdots & 0 \\
\end{pmatrix}
     \begin{pmatrix}
{\alpha}_{1}{\beta}_{1} \\
 \vdots \\
 {\alpha}_{1}{\beta}_{n}\\
 \end{pmatrix}+ \ldots +
\begin{pmatrix}
0 & \cdots & 0 & \omega_{1n}  \\
\vdots & \ddots & \vdots & \vdots \\
0 & \cdots & 0 & \omega_{nn}  \\
\end{pmatrix}
     \begin{pmatrix}
 {\alpha}_{n}{\beta}_{1} \\
 \vdots \\
 {\alpha}_{n}{\beta}_{n}\\
  \end{pmatrix} \\
  & = &
   \begin{pmatrix}
\omega_{11} &\cdots & \omega_{1n} \\
\vdots & \ddots &\vdots \\
\omega_{n1} &\cdots & \omega_{nn} \\
 \end{pmatrix}
     \begin{pmatrix}
{\alpha}_{1}{\beta}_{1} \\
\vdots \\
 {\alpha}_{n}{\beta}_{n}\\
 \end{pmatrix},
  \end{eqnarray*}
because for every $i \in \Lambda$ the matrix $M_{B}({\lambda}_{e_{i}})$ has zero entries except at most in its $i$th column.

Summarizing,
\begin{equation}\label{preproducto}
\xi_{B}(xy)= M_{B}
\begin{pmatrix}{\alpha}_{1}{\beta}_{1} \\ \vdots \\ {\alpha}_{n}{\beta}_{n} \\ \end{pmatrix}.
\end{equation}
\begin{definition}\label{PuntoGordo}
\rm
 Let $A$ be an algebra and $B=\{e_i\ \vert \ i\in \Lambda\}$ a basis of $A$. For arbitrary elements $x=\displaystyle \sum_{i \in \Lambda}  {\alpha}_{i} e_{i}$ and $y=\displaystyle \sum_{i \in \Lambda}  {\beta}_{i} e_{i} $  in $A$, we define
$$ x \bullet_{B} y = \left(   \sum_{i \in \Lambda} {\alpha}_{i} e_{i} \right) \bullet_{B}\left(   \sum_{i \in \Lambda}  {\beta}_{i} e_{i} \right)  :=  \sum_{i \in \Lambda}{\alpha}_i {\beta}_i e_i.$$
\end{definition}

Now, in the case of an evolution algebra we may write \eqref{preproducto} as follows.
\begin{equation}\label{producto}
\xi_{B}(xy)= M_{B}\left(\xi_{B}(x)\bullet_B \xi_{B}(y)\right),
\end{equation}
where, by abuse of notation, we write $\bullet_B$ to multiply two matrices, by identifying the matrices with the corresponding vectors and multiplying them as in Definition \ref{PuntoGordo}.
\subsection{Change of basis.}
First, we study the matrix of the product of a finite dimensional arbitrary algebra under change of basis. {Then we fix our attention in evolution algebras.}

 Let $B=\{e_{i}\; \vert \; i \in\Lambda\}$ and $B'=\{f_{j}\; \vert \; j\in\Lambda\}$ be two bases of an algebra \emph{A}. Suppose that the relation between these bases is given by
 \[e_{i}= \sum_{k \in \Lambda}q_{ki}f_{k} \quad \text{and} \quad f_{i}= \sum_{k \in \Lambda}p_{ki}e_{k},\]
 where $\{p_{ki}\}_{k,i \in \Lambda}$ and $\{q_{ki}\}_{k,i \in \Lambda}$ are subsets of $\mathbb{K}$ such that $P_{BB'}:= (q_{ki})$ and $P_{B'B}:=(p_{ki})$ are the change of basis matrices. Assume that the structure constants of $A$ relative to $B$ and to $B'$ are, respectively, $\{\varpi_{kij}\}_{i,j,k \in \Lambda}$ and $\{\omega_{kij}\}_{i,j,k \in \Lambda}$. Then, for every $i,j \in \Lambda$:
\begin{align*}
f_{i}f_{j} & =\left(\sum_{k \in \Lambda}p_{ki}e_{k}\right)\left(\sum_{t \in \Lambda}p_{tj}e_{t}\right)=\sum_{k,t \in \Lambda}p_{ki}p_{tj}e_{k}e_{t}= \sum_{k,t,m \in \Lambda}p_{ki}p_{tj}\varpi_{mkt}e_{m}\\
&=
\sum_{k,t,m,l \in \Lambda}p_{ki}p_{tj}\varpi_{mkt}q_{lm}f_{l}=
\sum_{l\in \Lambda}\left(\sum_{k, t, m \in \Lambda}\left(p_{ki}p_{tj}\varpi_{mkt}q_{lm}\right) \right) f_{l}=
\sum_{l\in \Lambda}\omega_{lij}f_{l}.
\end{align*}
Therefore,
$\sum_{k, t, m\in \Lambda}\left(p_{ki}p_{tj}\varpi_{mkt}q_{lm}\right)=\omega_{lij}.$

Our next aim is to express every $\omega_{lij}$ in terms of certain matrices. To find such matrices, write:

\vspace*{-13mm}

\begin{align*}
\omega_{lij} & =   p_{1i}p_{1j}\varpi_{111}q_{l1}+ \ldots + p_{1i}p_{1j}\varpi_{n11}q_{ln} \\
&   \vdots\\
& +  p_{1i}p_{nj}\varpi_{11n}q_{l1}+ \ldots + p_{1i}p_{nj}\varpi_{n1n}q_{ln} \\
&           \vdots  \\
& +          p_{ni}p_{1j}\varpi_{1n1}q_{l1}+ \ldots + p_{ni}p_{1j}\varpi_{nn1}q_{ln}  \\
&             \vdots  \\
& +         p_{ni}p_{nj}\varpi_{1nn}q_{l1}+ \ldots + p_{ni}p_{nj}\varpi_{nnn}q_{ln}.
\end{align*}


In terms of matrices,


\begin{eqnarray*}
\omega_{lij} & = &
   \begin{pmatrix}
     q_{l1} & \cdots & q_{ln} \\
         \end{pmatrix}
  \begin{pmatrix}
    \varpi_{111} & \cdots & \varpi_{11n} \\
    \vdots  & \ddots & \vdots \\
    \varpi_{n11}  & \cdots & \varpi_{n1n} \\
     \end{pmatrix}
  \begin{pmatrix}
    p_{1i}p_{1j}  \\
    \vdots \\
    p_{1i}p_{nj} \\
     \end{pmatrix}
     \\
&  + &
     \cdots  \\
& + &
     \begin{pmatrix}     q_{l1} & \cdots & q_{ln} \\
         \end{pmatrix}
  \begin{pmatrix}
    \varpi_{1n1} &  \cdots & \varpi_{1nn} \\
    \vdots & \ddots & \vdots \\
    \varpi_{nn1} &  \cdots & \varpi_{nnn} \\
     \end{pmatrix}
  \begin{pmatrix}
    p_{ni}p_{1j}  \\
    \vdots \\
    p_{ni}p_{nj} \\
     \end{pmatrix}.
\end{eqnarray*}

This is equivalent to:

\begin{eqnarray*}
 M_{B'}({\lambda}_{f_{i}}) & = &
     \begin{pmatrix}
     q_{11} & \cdots & q_{1n} \\
     \vdots & \ddots & \vdots \\
     q_{n1} &  \cdots & q_{nn} \\
         \end{pmatrix}
  \begin{pmatrix}
    \varpi_{111} & \cdots & \varpi_{11n} \\
    \vdots &  \ddots & \vdots \\
    \varpi_{n11} & \cdots & \varpi_{n1n} \\
     \end{pmatrix}
  \begin{pmatrix}
    p_{1i}p_{11} & \cdots & p_{1i}p_{1n}\\
    \vdots & \ddots & \vdots \\
    p_{1i}p_{n1} & \cdots & p_{1i}p_{nn}\\
     \end{pmatrix} \\
     &   &  \\
& +  &  \cdots  \\
     &  + &
     \begin{pmatrix}
     q_{11} & \cdots & q_{1n} \\
     \vdots & \ddots & \vdots \\
     q_{n1} & \cdots & q_{nn} \\
         \end{pmatrix}
  \begin{pmatrix}
    \varpi_{1n1} & \cdots & \varpi_{1nn} \\
    \vdots &  \ddots & \vdots \\
    \varpi_{nn1} &  \cdots & \varpi_{nnn} \\
     \end{pmatrix}
  \begin{pmatrix}
    p_{ni}p_{11} &  \cdots & p_{ni}p_{1n}\\
    \vdots &  \ddots & \vdots \\
    p_{ni}p_{n1} &  \cdots & p_{ni}p_{nn}\\
     \end{pmatrix} \\
     & =  & P_{B'B}^{-1}\left(\displaystyle\sum_{k}M_{B}({\lambda}_{e_{k}})p_{ki}\right)P_{B'B}.
\end{eqnarray*}

We finish the section by asserting the relationship among two structure matrices associated to the same evolution algebra relative to different bases. We include the proof of Theorem \ref{teor} for completeness. The ideas we have used can be found in \cite[Section 3.2.2.]{Tian}.

\begin{theorem}\label{teor}
Let $A$ be an  evolution algebra and let $B=\{e_{1},\ldots,e_{n} \}$ be a natural basis of $A$ with structure matrix $M_{B}=(\omega_{ij})$. Then:
\begin{enumerate}[\rm (i)]
\item If $B'=\{f_{1},\ldots,f_{n}\}$ is a natural basis of $A$  and $P=(p_{ij})$ is the change of basis matrix $P_{B'B}$, i.e., $f_{i}=\displaystyle\sum_{j}p_{ji}e_{j}$, for every $i$, then
 $\vert P \vert \neq 0$ and
\begin{eqnarray}\label{ecuac1}
\begin{pmatrix}
\omega_{11} & \cdots & \omega_{1n} \\
\vdots & \ddots & \vdots \\
\omega_{n1} & \cdots & \omega_{nn} \\
\end{pmatrix}
\left[
\begin{pmatrix}
p_{1i} \\
\vdots  \\
p_{ni} \\
\end{pmatrix}
\bullet_B
\begin{pmatrix}
p_{1j} \\
\vdots  \\
p_{nj} \\
\end{pmatrix}
\right]=
\begin{pmatrix}
0 \\
\vdots  \\
0 \\
\end{pmatrix} \quad \text{for every}\  i \neq j.
\end{eqnarray}
Moreover,
\begin{equation}\label{ecuac2}
M_{B'}=\begin{pmatrix}
p_{11} & \cdots & p_{1n} \\
\vdots & \ddots & \vdots \\
p_{n1} & \cdots & p_{nn} \\
\end{pmatrix}^{-1}
\begin{pmatrix}
\omega_{11} & \cdots & \omega_{1n} \\
\vdots & \ddots & \vdots \\
\omega_{n1} & \cdots & \omega_{nn} \\
\end{pmatrix}
\begin{pmatrix}
p_{11}^{2} & \cdots & p_{1n}^{2} \\
\vdots & \ddots & \vdots \\
p_{n1}^{2} & \cdots & p_{nn}^{2} \\
\end{pmatrix}=
P^{-1}M_{B}P^{(2)},
\end{equation}
\noindent
where $P^{(2)}=(p_{ij}^2)$.
\item {Assume that} $P=(p_{ij})\in \mathcal M_n(\K)$ has non-zero determinant and satisfies  the relations in {\rm{(\ref{ecuac1})}}. Define $B'=\{f_1, \dots, f_n\}$, where $f_{i}=\sum_{j}p_{ji}e_{j}$, for every $i$.  Then, $B'$ is a natural basis and {\rm{(\ref{ecuac2})}}  is satisfied.
\end{enumerate}
\end{theorem}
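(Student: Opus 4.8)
The plan is to derive everything from the general change-of-basis formula already obtained in the preceding computation, namely the identity
$$M_{B'}(\lambda_{f_i}) = P_{B'B}^{-1}\Bigl(\sum_k M_B(\lambda_{e_k})\, p_{ki}\Bigr) P_{B'B},$$
specialized to the evolution setting where $M_B = \sum_i M_B(\lambda_{e_i})$ and each $M_B(\lambda_{e_i})$ has its only nonzero column in position $i$, equal to the $i$-th column $(\omega_{1i}, \dots, \omega_{ni})^t$ of the structure matrix. The key observation is that, combined with \eqref{producto}, the product of two basis vectors $f_i, f_j$ of $B'$ expressed in $B$-coordinates is $\xi_B(f_i f_j) = M_B\bigl(\xi_B(f_i)\bullet_B \xi_B(f_j)\bigr) = M_B\bigl((p_{1i},\dots,p_{ni})^t \bullet_B (p_{1j},\dots,p_{nj})^t\bigr)$, since $\xi_B(f_i)$ is exactly the $i$-th column of $P$.

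For part (i): First I would note that $\vert P\vert \neq 0$ simply because $P$ is a change-of-basis matrix between two bases of the same vector space. Then, since $B'$ is a \emph{natural} basis, $f_i f_j = 0$ for $i \neq j$; applying $\xi_B$ and the formula above immediately yields \eqref{ecuac1}. For \eqref{ecuac2}, I would compute $f_i^2$ the same way: $\xi_B(f_i^2) = M_B\bigl((p_{1i},\dots,p_{ni})^t \bullet_B (p_{1i},\dots,p_{ni})^t\bigr) = M_B (p_{1i}^2, \dots, p_{ni}^2)^t$, which is $M_B$ times the $i$-th column of $P^{(2)}$. Since the structure matrix $M_{B'}$ has as its $i$-th column the $B'$-coordinates of $f_i^2$, and $B'$-coordinates are obtained from $B$-coordinates by left multiplication by $P_{BB'} = P_{B'B}^{-1} = P^{-1}$, assembling all columns $i = 1, \dots, n$ gives $M_{B'} = P^{-1} M_B P^{(2)}$, which is \eqref{ecuac2}.

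For part (ii): This is essentially the converse. Given $P$ with $\vert P \vert \neq 0$ satisfying \eqref{ecuac1}, the vectors $f_i = \sum_j p_{ji} e_j$ form a basis because $P$ is invertible. For each $i \neq j$, the relation \eqref{ecuac1} says precisely that $M_B\bigl(\xi_B(f_i)\bullet_B \xi_B(f_j)\bigr) = 0$, which by \eqref{producto} means $\xi_B(f_i f_j) = 0$, hence $f_i f_j = 0$; thus $B'$ is a natural basis. Then the computation of $f_i^2$ from part (i) goes through verbatim and produces \eqref{ecuac2}.

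I do not expect a genuine obstacle here, since the heavy lifting — deriving the conjugation formula for $M_{B'}(\lambda_{f_i})$ — has already been done before the theorem statement. The one point requiring a little care is bookkeeping: keeping straight that $\xi_B(f_i)$ is the $i$-th \emph{column} of $P = P_{B'B}$, that passing from $B$-coordinates to $B'$-coordinates is multiplication by $P^{-1}$ on the left, and that the structure matrix has the $e_i^2$ (resp. $f_i^2$) coordinate vectors as its columns — so that the column-by-column identities assemble correctly into the single matrix equation \eqref{ecuac2}. I would also remark explicitly that the $\bullet_B$ operation on the two identical columns of $P$ gives the $i$-th column of $P^{(2)} = (p_{ij}^2)$, which is the only place the squaring enters.
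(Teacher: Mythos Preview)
Your proposal is correct and follows essentially the same route as the paper: both arguments use formula \eqref{preproducto}/\eqref{producto} to express $\xi_B(f_if_j)$ as $M_B$ applied to the componentwise product of the $i$-th and $j$-th columns of $P$, from which \eqref{ecuac1} and the column-by-column derivation of \eqref{ecuac2} follow immediately, and part (ii) is the direct converse. Your opening reference to the conjugation formula $M_{B'}(\lambda_{f_i}) = P^{-1}\bigl(\sum_k M_B(\lambda_{e_k})p_{ki}\bigr)P$ is not actually needed (and you don't end up using it); the paper likewise bypasses it and works straight from \eqref{preproducto}.
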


\begin{proof}
(i). Clearly, since $B$ and $B'$ are two bases of $A$ then $\vert P\vert \neq 0$.  Besides, since $B$ and $B'$ are natural bases, by  \eqref{preproducto}  we have:
\begin{center}
\begin{equation*}
\xi_B(f_{i}f_{j})=
\begin{pmatrix}
\omega_{11} & \cdots & \omega_{1n} \\
\vdots & \ddots & \vdots \\
\omega_{n1} & \cdots & \omega_{nn} \\
\end{pmatrix}
\left[
\begin{pmatrix}
p_{1i} \\
\vdots  \\
p_{ni} \\
\end{pmatrix}
\bullet_B
\begin{pmatrix}
p_{1j} \\
\vdots  \\
p_{nj} \\
\end{pmatrix}
\right] { =\begin{pmatrix}
0 \\
\vdots  \\
0 \\
\end{pmatrix}}
\end{equation*}
\end{center}
and
\begin{center}
\begin{equation*}
\xi_B(f_{i}^2)=
\begin{pmatrix}
\omega_{11} & \cdots & \omega_{1n} \\
\vdots & \ddots & \vdots \\
\omega_{n1} & \cdots & \omega_{nn} \\
\end{pmatrix}
\begin{pmatrix}
p_{1i}^{2} \\
\vdots  \\
p_{ni}^{2} \\
\end{pmatrix}
\end{equation*}
\end{center}
for every $i, j$, being $i\neq j$.

On the other hand,  if $M_{B'}=(\varpi_{ij})$, for every $i \neq j$ we obtain:

$$
\xi_{B'}({f_i^2})=\begin{pmatrix}
p_{11} & \cdots & p_{1n} \\
\vdots & \ddots & \vdots \\
p_{n1} & \cdots & p_{nn} \\
\end{pmatrix}^{-1}
\begin{pmatrix}
\omega_{11} & \cdots & \omega_{1n} \\
\vdots & \ddots & \vdots \\
\omega_{n1} & \cdots & \omega_{nn} \\
\end{pmatrix}
\begin{pmatrix}
p_{1i}^{2} \\
\vdots  \\
p_{ni}^{2}  \\
\end{pmatrix}=
\begin{pmatrix}
\varpi_{1i} \\
\vdots \\
\varpi_{ni} \\
\end{pmatrix}
$$

\noindent
and consequently

$$
M_{B'}=\begin{pmatrix}
p_{11} & \cdots & p_{1n} \\
\vdots & \ddots & \vdots \\
p_{n1} & \cdots & p_{nn} \\
\end{pmatrix}^{-1}
\begin{pmatrix}
\omega_{11} & \cdots & \omega_{1n} \\
\vdots & \ddots & \vdots \\
\omega_{n1} & \cdots & \omega_{nn} \\
\end{pmatrix}
\begin{pmatrix}
p_{11}^{2} & \cdots & p_{1n}^{2} \\
\vdots & \ddots & \vdots \\
p_{n1}^{2} & \cdots & p_{nn}^{2} \\
\end{pmatrix} = P^{-1}M_{B}P^{(2)}.
$$

(ii). Assume that $P=(p_{ij})$ has non zero determinant. Then $B'$, defined as in the statement, is a basis of $A$. Moreover,  if \eqref{ecuac1} is satisfied, then $B'$ is a natural basis as follows by \eqref{preproducto}.
\end{proof}

The formula \eqref{ecuac1} can be rewritten in a more condensed way. Concretely {(see \cite{Tian})},
\begin{center}
\begin{equation}\label{ecuac3}
M_{B}(P \ast P)=0,
\end{equation}
\end{center}
where  $P \ast P= (c_{k (i,j)})\in \mathcal M_{n \times \frac{n(n-1)}{2}}(\K)$, being
$c_{k (i,j)}=p_{ki}p_{kj}$ for every pair $(i,j)$ with $i < j$ and $i, j \in \{1, \dots, n\}$.
\medskip

%
%
\section{Three-dimensional evolution algebras}\label{threedim}
%
%

 The aim of this section is to determine  the three-dimensional evolution algebras over a field $\K$ having characteristic different from two and such that for any $\alpha\in \K$  and $n= 2, 3, 7$, the equation $x^n=\alpha$ has a solution. For our purposes, we divide our study in different cases, depending on the dimension of $A^2$.

\subsection{Action of $S_3 \rtimes (\K^\times)^3$ on $\mathcal M_3(\K)$}\label{Action}

\noindent

\noindent
Let $\K$ be a field. By $\K^\times$ we denote $\K\setminus \{0\}$.
For every $\alpha, \beta, \gamma \in \K^\times$, we define the matrices:

\begin{equation*}
 \rm{\Pi}_{1}(\alpha):=
 \begin{pmatrix}
\alpha & 0 & 0 \\
 0 & 1 & 0 \\
0  & 0 & 1
  \end{pmatrix}, \, \,
\rm{\Pi}_{2}(\beta):=
 \begin{pmatrix}
1 & 0 & 0 \\
 0 & \beta & 0 \\
0  & 0 & 1
   \end{pmatrix}, \, \,
  \rm{\Pi}_{3}(\gamma):=
 \begin{pmatrix}
1 & 0 & 0 \\
0 & 1 & 0 \\
0  & 0 & \gamma
   \end{pmatrix}.
\end{equation*}

It is easy to prove that they commute each other. This implies that

$$G=\left\{\rm{\Pi}_1(\alpha)\rm{\Pi}_2(\beta)\rm{\Pi}_3(\gamma)\ \vert \, \alpha,\beta,\gamma \in \K^\times \right\}=\left\{
\begin{pmatrix}
\alpha & 0 & 0 \\
 0 & \beta & 0 \\
0  & 0 & \gamma
\end{pmatrix}
\ \vert \ \alpha, \beta, \gamma \in \K^\times  \right\} $$

\noindent
is an abelian subgroup of $\rm{GL}_3(\K )$. We will denote the diagonal matrix $\begin{pmatrix}
\alpha & 0 & 0 \\
 0 & \beta & 0 \\
0  & 0 & \gamma
\end{pmatrix}$ by $(\alpha,\beta,\gamma)$. With this notation in mind, it is immediate to see that $G \cong \K^\times \times \K^\times \times \K^\times $ with product given by $(\alpha,\beta,\gamma)(\alpha',\beta',\gamma'):= (\alpha\alpha',\beta\beta',\gamma\gamma')$.

%

\medskip

Now, consider the symmetric group $S_3$ of all permutations of the set $\{1, 2, 3\}$. The standard notation for $S_3$ is:
$$ S_3=\{id,(1,2),(1,3),(2,3),(1,2,3),(1,3,2)\},$$

\noindent
where $id$ is the identity map, $(i, j)$ is the permutation that sends the element $i$ into the element $j$  and $(i, j, k)$ is the permutation sending $i$ to $j$, $j$ to $k$ and $k$ to $i$, for $\{i, j, k\} =\{1, 2, 3\}$.

We may identify  $S_3$ with the set

\begin{equation}\label{permumat}
\left\{\rm{id}_3,
\begin{pmatrix}
0 & 1 & 0 \\
1 & 0 & 0 \\
0  & 0 & 1
\end{pmatrix},
\begin{pmatrix}
0 & 0 & 1 \\
0 & 1 & 0 \\
1  & 0 & 0
\end{pmatrix},
\begin{pmatrix}
1 & 0 & 0 \\
0 & 0 & 1 \\
0  & 1 & 0
\end{pmatrix},
\begin{pmatrix}
0 & 1 & 0 \\
0 & 0 & 1 \\
1  & 0 & 0
\end{pmatrix},
\begin{pmatrix}
0 & 0 & 1 \\
1 & 0 & 0 \\
0  & 1 & 0
\end{pmatrix}
 \right\}
 \end{equation}

\noindent
in the following way: $id$ is identified with the identity matrix $\rm{id}_3$, $(1, 2)$ with the matrix
$$
\begin{pmatrix}
0 & 1 & 0 \\
1 & 0 & 0 \\
0  & 0 & 1
\end{pmatrix}$$
\noindent
because this matrix appears {when permuting the first and the second columns of} $\rm{id}_3$, etc.
The matrices in \eqref{permumat} are called $3\times3$ {\emph{permutation matrices}}.
%
%

From now on, we will consider that $S_3$ consists of the permutation matrices.
This allows to see $S_3$ as a subgroup of $\rm{GL}_3(\K)$. Denote by $H$ the subgroup of  $\rm{GL}_3(\K)$ generated by  $S_3$ and  $(\K^\times)^3$.


\medskip

It is not difficult to verify that for every $\sigma \in S_3$ and every $({\lambda}_1, {\lambda}_2, {\lambda}_3) \in (\K^\times)^3$ its product is as follows:
$$({\lambda}_1,{\lambda}_2,{\lambda}_3)\sigma=\sigma ({\lambda}_{\sigma(1)},{\lambda}_{\sigma(2)},{\lambda}_{\sigma(3)}).$$

\noindent
Therefore, we may write
$$H=\{\sigma(\alpha,\beta,\gamma)\ \vert \ \sigma \in S_3, \; (\alpha,\beta,\gamma)\in (\K^\times)^3\}. $$

\noindent
The multiplication in $H$ is given by
\begin{eqnarray}\label{ProdSemidir}
\sigma({\alpha}_1,{\alpha}_2,{\alpha}_3)\tau ({\beta}_1,{\beta}_2,{\beta}_3) & = &\sigma \tau ({\alpha}_{\tau(1)},{\alpha}_{\tau(2)},{\alpha}_{\tau(3)})({\beta}_1,{\beta}_2,{\beta}_3)\\ \nonumber
& = &\sigma \tau ({\alpha}_{\tau(1)}{\beta}_1,{\alpha}_{\tau(2)}{\beta}_2,{\alpha}_{\tau(3)}{\beta}_3).
\end{eqnarray}

{A} \emph{semidirect product} of $S_3$ and $(\K^{\times})^3$ is defined as $S_3 \times (\K^{\times})^3$ with product as in \eqref{ProdSemidir}. It is denoted by $$S_3 \rtimes (\K^{\times})^3.$$
Notice that  $S_3 \rtimes (\K^{\times})^3$ coincides with

\begin{equation}
\label{permutationmatrix}
\footnotesize{
\left\{
\begin{pmatrix}
\alpha & 0 &  0 \\
0 & \beta & 0  \\
0 & 0 & \gamma
\end{pmatrix},
\begin{pmatrix}
0 & \alpha &  0 \\
\beta & 0 & 0  \\
0 & 0 & \gamma
\end{pmatrix},
\begin{pmatrix}
0 & 0 &  \alpha\\
0 & \beta & 0  \\
\gamma & 0 & 0
\end{pmatrix},
\begin{pmatrix}
\alpha & 0 &  0 \\
0 & 0 & \beta  \\
0 & \gamma & 0
\end{pmatrix},
\begin{pmatrix}
0 & \alpha &  0 \\
0 & 0 & \beta  \\
\gamma & 0 & 0
\end{pmatrix},
\begin{pmatrix}
0 & 0 &  \alpha \\
\beta & 0 & 0  \\
0 & \gamma & 0
\end{pmatrix}\  \vert\ \alpha, \beta, \gamma \in \K^\times \right\}}
\end{equation}

\medskip
\noindent
Thus, $S_3 \rtimes (\K^{\times})^3=\left\{ (\alpha,\beta,\gamma)\sigma \; \;\vert \; \alpha, \beta, \gamma \in \K^\times, \; \;\sigma \in S_3 \right\}$.\\

\medskip

We define the  action of $S_3 \rtimes (\K^{\times})^3$ on the set ${\mathcal M}_3(\K)$ given by:

\begin{equation}\label{producto1}
\sigma \cdot
\begin{pmatrix}
\omega_{11} & \omega_{12} & \omega_{13}\\
\omega_{21} & \omega_{22} & \omega_{23} \\
\omega_{31} & \omega_{32} & \omega_{33} \\
\end{pmatrix}:=
\begin{pmatrix}
\omega_{\sigma(1)\sigma(1)} & \omega_{\sigma(1)\sigma(2)} & \omega_{\sigma(1)\sigma(3)}\\
\omega_{\sigma(2)\sigma(1)} & \omega_{\sigma(2)\sigma(2)} & \omega_{\sigma(2)\sigma(3)} \\
\omega_{\sigma(3)\sigma(1)} & \omega_{\sigma(3)\sigma(2)} & \omega_{\sigma(3)\sigma(3)} \\
\end{pmatrix}.
\end{equation}

\begin{equation}\label{producto2}
(\alpha,\beta,\gamma)\cdot
\begin{pmatrix}
\omega_{11} & \omega_{12} & \omega_{13}\\
\omega_{21} & \omega_{22} & \omega_{23} \\
\omega_{31} & \omega_{32} & \omega_{33} \\
\end{pmatrix}:=
\begin{pmatrix}
\alpha \omega_{11} & \frac{\beta^2}{\alpha} \omega_{12} & \frac{\gamma^2}{\alpha} \omega_{13}\\ \\
\frac{\alpha^2}{\beta} \omega_{21} & \beta \omega_{22} & \frac{\gamma^2}{\beta} \omega_{23}\\ \\
\frac{\alpha^2}{\gamma} \omega_{31} & \frac{\beta^2}{\gamma} \omega_{32} & \gamma \omega_{33}
\end{pmatrix}
\end{equation}

\noindent
for every $\sigma \in S_3$ and  every $(\alpha,\beta,\gamma) \in (\K^{\times})^3$.
\medskip

For arbitrary $P\in S_3 \rtimes (\K^{\times})^3$ and $ {M}\in {\mathcal M}_3(\K)$, the action of $P$ on ${M}$ can be formulated as follows:

\begin{equation}\label{accion}
 P \cdot {M}: = P^{-1}{M}P^{(2)}.
 \end{equation}
\medskip

\begin{remark}\label{explicacion}
\rm
The action given in \eqref{accion} has been inspired by Condition \eqref{ecuac2} in Theorem \ref{teor}. Notice that any matrix $P$ in $S_3 \rtimes (\K^{\times})^3$ is a change of basis matrix from a natural basis $B$ into another natural basis $B'$ and the relationship among the structure matrices $M_B$ and $M_{B'}$ and the matrix $P$ is as given in Condition \eqref{ecuac2}, that is, $P^{-1}M_BP^{(2)}=M_B'$. This is the reason because we define the action of $P$ on $M_B$ by:
$$P \cdot M_B = P^{-1}M_BP^{(2)}.$$
\end{remark}

The result that follows will be very useful in Theorem \ref{thm:clasificacion}.

\begin{proposition}\label{prop:numerodeceros}
For any $P\in S_3\rtimes (\K^\times)^3$ and any $M \in \mathcal{M}_3(\K)$ we have:
\begin{enumerate}[\rm (i)]
\item\label{numerodeceros1} The number of zero entries in $M$ coincides with the number of zero entries in $P \cdot M$.
\item\label{numerodeceros2} The number of zero entries in the main diagonal of $M$ coincides with the number of zero entries in the main diagonal of $P \cdot M$.
\item\label{numerodeceros3} The rank of $M$ and the rank of $P\cdot M$ coincide.
\item\label{numerodeceros4} Assume that $M$ is the structure matrix of an evolution algebra $A$ relative to a natural basis $B$. Assume that $A^2=A$. If $N$ is the structure matrix of $A$ relative to a natural basis $B'$ then there exists $Q\in S_3\rtimes (\K^\times)^3$ such that $N= Q \cdot M$.
\end{enumerate}
\end{proposition}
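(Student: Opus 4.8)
The plan is to treat the four items essentially independently, since (i)--(iii) are purely combinatorial/linear-algebraic facts about the action \eqref{accion}, while (iv) requires the geometric input that change-of-basis matrices between natural bases are constrained when $A^2=A$. For (i) and (ii), I would exploit the explicit descriptions \eqref{producto1} and \eqref{producto2}: every $P\in S_3\rtimes(\K^\times)^3$ factors as $\sigma(\alpha,\beta,\gamma)$, so $P\cdot M$ is obtained by first permuting rows and columns of $M$ simultaneously by $\sigma$ (which is a bijection of the nine entries sending the diagonal to the diagonal), and then multiplying each entry $\omega_{kl}$ by a nonzero scalar (namely $\alpha,\beta,\gamma$ on the diagonal and $\lambda_l^2/\lambda_k$ off-diagonal, all in $\K^\times$). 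Neither operation can create or destroy a zero, and the diagonal is preserved setwise by the permutation, which gives both (i) and (ii) at once. I would phrase this cleanly by noting that the entry in position $(k,l)$ of $P\cdot M$ equals a nonzero scalar times $\omega_{\sigma(k)\sigma(l)}$.

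For (iii), the cleanest argument is that $P\cdot M=P^{-1}MP^{(2)}$ by \eqref{accion}, and both $P^{-1}$ and $P^{(2)}$ are invertible: $P^{-1}$ obviously, and $P^{(2)}$ because for $P=\sigma(\alpha,\beta,\gamma)$ the matrix $P^{(2)}=\sigma(\alpha^2,\beta^2,\gamma^2)$ (up to checking that squaring commutes with the permutation-matrix structure), which has determinant $\pm\alpha^2\beta^2\gamma^2\neq 0$. Left and right multiplication by invertible matrices does not change the rank, so $\operatorname{rank}(P\cdot M)=\operatorname{rank}(M)$.

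Item (iv) is the substantive one. Given $M=M_B$ and $N=M_{B'}$ for two natural bases $B,B'$ of the same evolution algebra $A$, Theorem \ref{teor} gives a change-of-basis matrix $Q=P_{B'B}$ with $|Q|\neq0$ satisfying \eqref{ecuac1} and $N=Q^{-1}MQ^{(2)}$. So the content is to show that the hypothesis $A^2=A$ forces $Q\in S_3\rtimes(\K^\times)^3$, i.e.\ $Q$ is a monomial matrix (exactly one nonzero entry per row and per column). The hard part will be exactly this: ruling out change-of-basis matrices with two or more nonzero entries in some column. Here is the mechanism I would use. Writing $e_i'^2=\sum_k \varpi_{ki}e_k'$, one has from \eqref{ecuac1} the orthogonality relations saying that the columns $v_i=(p_{1i},\dots,p_{ni})^\top$ of $Q$ satisfy $M_B(v_i\bullet_B v_j)=0$ for $i\neq j$; and $A^2=A$ means the columns $M_B(v_i\bullet_B v_i)$, i.e.\ the coordinate vectors of $f_i^2$, span all of $\K^3$, equivalently $N=Q^{-1}MQ^{(2)}$ is invertible, equivalently $M Q^{(2)}$ is invertible, equivalently $M$ is invertible (rank $3$) and $Q^{(2)}$ is invertible. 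Now $Q^{(2)}=(p_{ij}^2)$ invertible together with the Hadamard-type orthogonality $v_i\bullet_B v_j \in \ker M_B=0$ (since $M_B$ is invertible!) forces, for each pair $i\neq j$, that $p_{ki}p_{kj}=0$ for all $k$: supports of distinct columns of $Q$ are disjoint. Since $Q$ is a $3\times3$ invertible matrix with pairwise disjoint column supports partitioning $\{1,2,3\}$, each column has support of size exactly one, so $Q$ is monomial, hence $Q\in S_3\rtimes(\K^\times)^3$. The one point needing care is the step ``$A^2=A \Rightarrow M_B$ invertible'': this follows because $A^2$ is spanned by $\{e_i^2\}_i$ whose coordinate vectors are precisely the columns of $M_B$, so $\dim A^2 = \operatorname{rank} M_B$, and $A^2=A$ gives rank $3$. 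With $M_B$ invertible the argument above is immediate; I would present it in that order (invertibility of $M_B$ first, then disjoint supports, then monomiality), and finally take $Q$ itself (or rather $P_{B'B}$) as the required element, observing that the action of $Q$ recovers $N$ via \eqref{accion}.
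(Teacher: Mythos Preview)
Your argument is correct. Items (i)--(iii) match the paper's proof, which simply decomposes $P=\sigma(\alpha,\beta,\gamma)$ and appeals to the explicit formulas \eqref{producto1}, \eqref{producto2}, and \eqref{accion}; your write-up is merely more explicit (in particular, you note that $P^{(2)}$ is invertible, which the paper leaves implicit).

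Item (iv) is where your approach genuinely differs. The paper does not argue this point at all: it simply invokes \cite[Theorem~4.4]{EL}, which asserts (in the perfect case $A^2=A$) that any change of basis between natural bases lies in $S_n\rtimes(\K^\times)^n$. Your direct proof --- observing that $A^2=A$ forces $M_B$ to be invertible, whence the orthogonality relations \eqref{ecuac1} collapse to $p_{ki}p_{kj}=0$ for all $k$ and $i\neq j$, so the columns of $Q$ have pairwise disjoint supports and $Q$ is monomial --- is self-contained, elementary, and arguably more illuminating in this $3$-dimensional setting than a citation. Both routes are valid; yours has the advantage of being internal to the paper.
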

\begin{proof}
Fix an element $P$ in $S_3\rtimes (\K^\times)^3$. Then there exist $\sigma \in S_3$ and $(\alpha, \beta, \gamma)\in (\K^\times)^3$ such that $P=\sigma(\alpha, \beta, \gamma)$. Therefore $P\cdot M = (\sigma(\alpha, \beta, \gamma))\cdot M =  \sigma \cdot ((\alpha, \beta, \gamma)\cdot M)$. Item  \eqref{numerodeceros1} and \eqref{numerodeceros2} follows by \eqref{producto1} and \eqref{producto2}. Item \eqref{numerodeceros3}  is easy to show because $P\cdot M = P^{-1}MP^{(2)}$ and $P$ is an invertible matrix.
Finally, \eqref{numerodeceros4} follows from the definition of the action and \cite[Theorem 4.4]{EL}.
\end{proof}

\subsection{Main theorem}
Here we prove the main result of the paper: the classification of three-dimensional evolution algebras over a field of characteristic different from two in which there are roots of orders two, three and seven.

\begin{definitions}
\rm (See \cite[Definitions 2.4]{CSV1}).
An \emph{evolution subalgebra} of an evolution algebra $A$
is a subalgebra $A' \subseteq A$ such that $A'$
is an evolution algebra, i.e. $A'$ has a natural basis.

We say that $A'$ has the \emph{extension property} if there exists a natural basis $B'$ of $A'$ which can be extended to a natural basis of $A$.
\end{definitions}

\medskip

An evolution algebra is \emph{non-degenerate} if $e^2\neq0$ for any element $e$ in any basis (see \cite[Definition 2.16 and Corollary 2.19]{CSV1}). Otherwise we say that it is \emph{degenerate}. Note that this definition does not depend on the basis.

\medskip

\begin{definition}\label{}
\rm
A three dimensional evolution algebra $A$ is said to have \emph{Property} (2LI) if for any basis $\{e_1, e_2, e_3\}$ of $A$,  the ideal $A^2$ has dimension two and it is generated by $\{e_i^2, e_j^2\}$, for  $i, j\in \{1, 2, 3\}$ with $i\neq j$.
\end{definition}

 \begin{theorem}\label{thm:clasificacion}
 Let $A$ be a three-dimensional evolution $\K$-algebra.
 \begin{enumerate}[\rm (i)]
\item\label{cero} If ${\rm dim}(A^2)=0$ then $M_B=0$ for any natural basis $B$ of $A$.
\item\label{uno}  If ${\rm dim}(A^2)=1$ then there exists a natural basis $B$ such that $M_B$ is one of the  seven matrices given in Table 1. All of them produce mutually non-isomorphic evolution algebras. The algebras in this case are completely classified by the following properties: having $A^2$ the extension property, ${\rm dim}({\rm ann}(A))$, and {whether or not} $A$ has a principal ideal of dimension two which is degenerate.
\item\label{dos}  If ${\rm dim}(A^2)=2$, then then there exists a natural basis $B$ such that $M_B$ is one of the  matrices given in the first column of Tables 2 to 17. They are all mutually non-isomorphic except in the cases shown in Tables 2' to 17'. There are {57} possible cases.
Let $B=\{e_1, e_2, e_3\}$ {be such that $\{e_1^2, e_2^2\}$ is a basis of $A^2$ and $e_3^2=c_1e_1^2+c_2e_2^2$}, for $c_1, c_2\in \K$.

 \begin{enumerate}[\rm (a)]
 \item If $c_1c_2\neq 0$, then the evolution algebras have ${\rm dim}({\rm ann}(A))=0$;   the algebra $A$ has Property {\rm (2LI)} and the number of non-zero entries in $M_B$ can be 4 to 9.
 \item If $c_1 = 0 $ and $c_2 \neq 0$ (the case $c_2 = 0 $ and $c_1 \neq 0$ is analogous), then  the evolution algebras appearing have ${\rm dim}({\rm ann}(A))=0$;  the algebra $A$ has not Property {\rm (2LI)} and the number of non-zero entries in the set that follows can be from 4 to 9.
 \smallskip

\scalebox{0.68}
{ $S=\Bigg\{$
$
\begin{pmatrix}
1 & 0 &  0 \\
0 & 1 & 1  \\
0 & \alpha & \alpha
\end{pmatrix},
$
$
\begin{pmatrix}
0 & 1 & 1 \\
1 & 0 & 0 \\
\alpha & 0 & 0
\end{pmatrix},
$
$
\begin{pmatrix}
\alpha & 1 &  1 \\
0 & 1 & 1  \\
0 & \beta & \beta
\end{pmatrix},
$
$
\begin{pmatrix}
0 & 0 & 0 \\
1 & 1 & 1  \\
\alpha & \beta & \beta
\end{pmatrix},
$
$
\begin{pmatrix}
1 & 1 & 1 \\
\alpha & 0 & 0 \\
\beta & 0 & 0
\end{pmatrix},
$
$
\begin{pmatrix}
1 & 0 & 0 \\
\alpha & 1 & 1 \\
\beta & \gamma & \gamma
\end{pmatrix},
$
$\begin{pmatrix}
0 & 1 & 1 \\
\alpha & 1 & 1 \\
\beta & \gamma & \gamma
\end{pmatrix},
$
$
\begin{pmatrix}
\alpha & 1 & 1 \\
\beta & 1 & 1 \\
\gamma & \lambda & \lambda
\end{pmatrix}
$
$\Bigg\}.$
}
\smallskip

 \item If $c_1, c_2= 0 $, then  the evolution algebras appearing have ${\rm dim}({\rm ann}(A))=1$;  the algebra $A$ has not Property {\rm (2LI)} and the number of non-zero entries in rows one and two can be from 1 to 4.
 \end{enumerate}

 \item\label{tres}  If ${\rm dim}(A^2)=3$ then there exists a natural basis $B$ such that $M_B$ is one of the   matrices given in the first column of Tables 18 to 24. They are all mutually non-isomorphic except in the cases shown in Tables 19' to 23'. They are completely determined by the  number of non-zero entries in $M_B$. There are 51 possible cases.
 \end{enumerate}
\end{theorem}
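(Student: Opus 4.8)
The plan is to prove Theorem~\ref{thm:clasificacion} by a systematic case analysis on $d:={\rm dim}(A^2)\in\{0,1,2,3\}$, using Theorem~\ref{teor} as the fundamental tool: two structure matrices $M_B$ and $M_{B'}$ describe the same evolution algebra if and only if there is an invertible $P$ satisfying \eqref{ecuac1} with $M_{B'}=P^{-1}M_BP^{(2)}$. The case $d=0$ is immediate, since $A^2=0$ forces every $e_i^2=0$. For $d=3$ and for the subcase $c_1c_2\neq 0$ of $d=2$, the key reduction is Proposition~\ref{prop:numerodeceros}\eqref{numerodeceros4} together with \cite[Theorem 4.4]{EL}: when $A^2=A$ (more generally when the natural basis can be chosen so that all change-of-basis matrices lie in $S_3\rtimes(\K^\times)^3$), the isomorphism problem reduces to computing orbits of the explicit action \eqref{producto1}--\eqref{producto2}. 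I would first normalize the structure matrix by an appropriate choice of natural basis, then stratify the $S_3\rtimes(\K^\times)^3$-orbit space by the invariants that Proposition~\ref{prop:numerodeceros} guarantees are orbit-invariant — the number of zero entries, the number of zero diagonal entries, and the rank — and within each stratum compute normal forms by hand, reading off the parameter families and the coincidences (the primed tables).

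\textbf{The cases $d=1$ and $d=2$.}
For $d=1$ the change-of-basis matrices need not lie in $S_3\rtimes(\K^\times)^3$, so I would instead argue via the structural dichotomies listed in the statement. First decide whether $A^2$ has the extension property (EP): if so, one can choose a natural basis $B=\{e_1,e_2,e_3\}$ with $e_1^2$ spanning $A^2$ and $e_2,e_3$ completing it, which constrains $M_B$ to have nonzero entries only in its first column; the remaining freedom is a one-parameter rescaling plus the degenerate directions, and one splits further by ${\rm dim}({\rm ann}(A))\in\{0,1,2\}$ and by the PD2EI property. If $A^2$ does not have EP, a separate normalization (using that every natural basis of $A^2$ fails to extend) yields the two remaining matrices. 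For $d=2$, fix $B$ with $\{e_1^2,e_2^2\}$ a basis of $A^2$ and $e_3^2=c_1e_1^2+c_2e_2^2$; the three subcases $c_1c_2\neq 0$, exactly one $c_i=0$, and $c_1=c_2=0$ are governed respectively by $P\in S_3\rtimes(\K^\times)^3$, by $P\in\{{\rm id}_3,(2,3),Q\}$, and by $P\in\{{\rm id}_3,Q',Q''\}$ — these restrictions on the admissible $P$ must be extracted from \eqref{ecuac1} by elementary linear algebra, after which the orbit computations proceed as in the diagonalizable case, using the invariance statements (Remarks \ref{CerosInvariantesEnS}, \ref{EntradasNoNulasEnFilas}) to organize the bookkeeping and to certify non-isomorphism across tables.

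\textbf{Certifying non-isomorphism and completeness.}
Within each table, non-isomorphism between different rows follows from the orbit invariants just described (number of nonzero entries in $M_B$, or in the set $S$, or in rows $1$ and $2$; zero diagonal entries; rank; ${\rm dim}({\rm ann}(A))$; EP; PD2EI; Property~(2LI)), all of which are genuine isomorphism invariants by the preceding results. Non-isomorphism between algebras living in different blocks ($d$-value, or the subcases within $d=2$) is immediate from ${\rm dim}(A^2)$, ${\rm dim}({\rm ann}(A))$, and Property~(2LI). The only place where different parameter values in the same row can give isomorphic algebras is handled separately in Tables $2'$--$23'$: here one solves the equation $N=P\cdot M$ explicitly for $P\in S_3\rtimes(\K^\times)^3$ (or in the smaller admissible set) to find exactly which parameter identifications occur. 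Completeness of the list in each case follows because the normalization procedure exhausts all possible $M_B$ up to the admissible $P$, and a finite check confirms that every resulting normal form appears in the tables.

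\textbf{The main obstacle.}
The hard part will be the sheer combinatorial size of the $d=2$ and $d=3$ analyses: for $d=3$ one must enumerate, up to the $S_3\rtimes(\K^\times)^3$-action, all $3\times 3$ matrices stratified by their number of nonzero entries (from $3$ to $9$), which is where the bulk of the $116$ types arise, and one must carefully resolve the boundary cases where a parameter specializes so that an entry vanishes or the rank drops — these are precisely the coincidences recorded in the primed tables, and getting them exactly right (neither merging orbits that are distinct nor splitting a single orbit) is the delicate point. The $d=2$ subcases with restricted $P$ add the further subtlety that $S_3\rtimes(\K^\times)^3$ is \emph{not} available, so the orbit invariants of Proposition~\ref{prop:numerodeceros} must be replaced by the weaker invariants valid under $\{{\rm id}_3,(2,3),Q\}$ or $\{{\rm id}_3,Q',Q''\}$, and one has to check by hand that the chosen invariants still separate all the listed normal forms.
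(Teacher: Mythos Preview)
Your proposal is correct and follows essentially the same route as the paper: the same case split on $\dim(A^2)$, the same reduction to $S_3\rtimes(\K^\times)^3$-orbits for $d=3$ and for the $c_1c_2\neq 0$ subcase of $d=2$ (the latter proved directly from \eqref{ecuac1} rather than via Proposition~\ref{prop:numerodeceros}\eqref{numerodeceros4}, since $A^2\neq A$ there), the same restricted admissible sets $\{{\rm id}_3,(2,3),Q\}$ and $\{{\rm id}_3,Q',Q''\}$ for the remaining $d=2$ subcases, and the same structural invariants (EP, $\dim({\rm ann}(A))$, PD2EI) in place of change-of-basis bookkeeping for $d=1$. One small slip: when $A^2$ has the extension property and you take $e_1'=e_1^2$, the resulting $M_{B'}$ has nonzero entries only in its first \emph{row}, not its first column, since every $f_i^2$ lies in $\K e_1'$.
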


\noindent
\emph{Proof.}
\medskip

 Fix a three-dimensional evolution algebra $A$ and a natural basis $B=\{e_{1},e_{2},e_{3}\}$. Let $M_B$ be the structure matrix of $A$ relative to $B$:
$$M_B=\begin{pmatrix}
\omega_{11} & \omega_{12} & \omega_{13}\\
\omega_{21} & \omega_{22} & \omega_{23} \\
\omega_{31} & \omega_{32} & \omega_{33} \\
\end{pmatrix}.
$$

 In order to classify all the three dimensional evolution algebras we try to find a basis of $A$ for which  its structure matrix has an expression as easy as possible, where by `easy' we mean with the maximum number of 0, 1 and -1 in the entries.

\medskip
\noindent
{\bf Case ${\rm dim}(A^{2})=0$. }

\noindent
Then $M_B=0$
and there is a unique evolution algebra.
\medskip

\noindent
{\bf Case ${\rm dim}(A^{2})=1$.}

\noindent
Without loss in generality we may assume $e_{1}^{2} \neq 0$. Write $e_{1}^{2}=\omega_{1}e_{1}+\omega_{2}e_{2}+\omega_{3}e_{3}$, where $\omega_i\in \K$ and $\omega_i\neq 0$ for some $i$.
Note that $\{e_{1}^{2}\}$ is a basis of $A^{2}$.

Since $e_{2}^{2}, e_{3}^{2}\in A^2$, there exist $c_1, c_2 \in \K$ such that
\begin{align*}
e_{2}^{2}=c_{1}e_{1}^{2}=c_{1}(\omega_{1}e_{1}+\omega_{2}e_{2}+\omega_{3}e_{3}),\\
e_{3}^{2}=c_{2}e_{1}^{2}=c_{2}(\omega_{1}e_{1}+\omega_{2}e_{2}+\omega_{3}e_{3}). \\
\end{align*}

Then
$$ M_{B}=
 \begin{pmatrix}
 \omega_{1} & c_{1}\omega_{1} & c_{2}\omega_{1} \\
 \omega_{2} & c_{1}\omega_{2} & c_{2}\omega_{2} \\
 \omega_{3} & c_{1}\omega_{3} & c_{2}\omega_{3}
  \end{pmatrix}.$$

We start the study of this case by paying attention to the algebraic properties of the evolution algebras that we consider. To see which are the matrices that appear as change of basis matrices, we refer the reader to Appendix \ref{apendice}.

We analyze when $A^2$ has the extension property. That is, if there exists a natural basis $B'=\{e_{1}', e_{2}', e_{3}' \}$ of $A$ with
\begin{eqnarray} \label{amplia}
           e_{1}'&=& e_1^2 = \omega_1e_1+\omega_2e_2+\omega_3e_3 \\
\nonumber  e_{2}'&=& \alpha e_{1}+ \beta e_{2}+\gamma e_{3} \\
\nonumber  e_{3}'&=& \delta e_{1}+ \nu e_{2}+\eta e_{3},
 \end{eqnarray}

\noindent
for some $\alpha, \beta, \gamma, \delta, \nu, \eta \in \K$ that we may choose satisfying $\nu(\beta-\gamma)\neq 0$.
Being $B'$ a basis implies
\begin{eqnarray}\label{determ}
\vert\ P_{B'B}\ \vert =\left\vert\begin{matrix}
 \omega_{1} & \alpha & \delta\\
 \omega_{2} & \beta & \nu \\
 \omega_{3} & \gamma & \eta
 \end{matrix}\right\vert
 \neq 0.
\end{eqnarray}
By  Theorem \ref{teor}, $B'$ is a natural basis if and only if the following conditions are satisfied.
\begin{eqnarray}
\alpha \omega_{1}+\beta \omega_{2}c_{1}+\gamma \omega_{3}c_{2}  =& 0 \label{condic1}\\
\delta \omega_{1}+\nu \omega_{2}c_{1}+\eta \omega_{3}c_{2}  =& 0  \label{condic2}\\
\nonumber \alpha \delta + \beta \nu c_{1} + \gamma \eta c_{2}   =& 0
\end{eqnarray}

\medskip
In these conditions, the structure matrix of $A$ relative to $B'$ is:
$$ M_{B'}=
 \begin{pmatrix}
 \omega_{1}^{2}+\omega_{2}^{2}c_{1}+\omega_{3}^{2}c_{2} & \alpha^{2}+\beta^{2}c_{1}+\gamma^{2}c_{2} & \delta^{2}+\nu^{2}c_{1}+\eta^{2}c_{2} \\
 0 & 0 & 0 \\
 0 & 0 & 0
  \end{pmatrix}. $$

\medskip

 For the computations, we will take into account  \eqref{producto2}. On the other hand, to find the different mutually non-isomorphic evolution algebras it will be very useful to study if they have a two-dimensional evolution ideal generated by one element which is degenerate as an evolution algebra.

 Now, we start with the analysis of the different cases.
\medskip

 \noindent
  {\bf Case 1.} Suppose that $\omega_{1} \neq 0$.

  \noindent
 By changing the basis, we may consider $e_{1}^{2}=e_{1}+\omega_{2}e_{2}+\omega_{3}e_{3}$.
Using  \eqref{condic1} we get
$\alpha=-(\beta \omega_2 c_1 +\gamma \omega_3 c_2)$ and by   \eqref{condic2},  $\delta=-(\nu \omega_2 c_1 +\eta \omega_3 c_2)$. If we replace $\alpha$ and $\delta$ in \eqref{determ} we obtain that:

$$ \vert\ P_{B'B}\ \vert = (1+\omega_2^2c_1+\omega_3^2c_2)\nu(\beta-\gamma). $$

Now we distinguish  if $\vert\ P_{B'B}\ \vert $ is zero or not. This happens depending on $1+\omega_2^2c_1+\omega_3^2c_2$ being zero or not.\\

 \noindent
 {\bf Case 1.1} Assume $1+\omega_2^2c_1+\omega_3^2c_2=0$.

 \noindent
In this case $A^2$ has not the extension property since $\vert P_{B'B} \vert=0 $. We will analyze what happens when $1+\omega_3^2c_2 \neq 0$ and when $1+\omega_3^2c_2=0$.
\medskip

 \noindent
{\bf Case 1.1.1}   If $1+\omega_3^2c_2 \neq 0$.

\noindent
Note that $\omega_{2}^2c_1 \neq 0$ since otherwise we get a contradiction.  Then $c_{1}=\dfrac{-1-\omega_{3}^{2}c_{2}}{\omega_{2}^{2}}$. In this case, the structure matrix is:
$$ \footnotesize{M_{B}=
 \begin{pmatrix}
 1 & \dfrac{-1-\omega_{3}^{2}c_{2}}{\omega_{2}^{2}} & c_{2} \\
 \\
  \omega_{2} & \dfrac{-1-\omega_{3}^{2}c_{2}}{\omega_{2}} & c_{2}\omega_{2} \\
 \\
 \omega_{3} & \dfrac{(-1-\omega_{3}^{2}c_{2})\omega_{3}}{\omega_{2}^{2}} & c_{2}\omega_{3}
  \end{pmatrix}. }$$

\medskip

 \noindent
  {\bf Case 1.1.1.1} Suppose that $\omega_3 \neq 0$.

\noindent

If we take the natural basis $B''=\{e_1, \omega_2e_2, \omega_3e_3\}$, then

\begin{eqnarray}\label{caso 1.1.1.1}
\footnotesize{M_{B''}=
 \begin{pmatrix}
1 & -1-\omega_{3}^{2}c_{2} & \omega_{3}^{2}c_{2} \\ \\
 1 & -1-\omega_{3}^{2}c_{2} & \omega_{3}^{2}c_{2} \\ \\
 1 & -1-\omega_{3}^{2}c_{2} & \omega_{3}^{2}c_{2}
  \end{pmatrix}.}
  \end{eqnarray}

We are going to distinguish two cases: $c_2=0$ and $c_2\neq 0$.\\

Assume first $c_2=0$. Then $ M_{B''}=
 \begin{pmatrix}
1 & -1 & 0  \\
 1 & -1 & 0 \\
 1 & -1 & 0
  \end{pmatrix}.$ By considering another change of basis we find a structure matrix with more zeros. Concretely, let $B'''=\{e_2, e_1+e_3, e_3\}$. Then

 $$ M_{B'''}=
 \begin{pmatrix}
 1& -1 & 0 \\
 1& -1 & 0 \\
 0& 0 & 0 \\
  \end{pmatrix}. $$

In what follows we will assume that $c_2 \neq 0$. We recall that we are considering the structure matrix given in \eqref{caso 1.1.1.1}. Take $I:=<(1+\omega_3^2c_2)e_1+e_2>$. Then $I$ is a two-dimensional evolution ideal which is degenerate as an evolution algebra.

Now, for $B'''$ the natural basis given by

$$ \footnotesize{P_{B'''B''}=
\begin{pmatrix}
 \dfrac{1+(\omega_3^2c_2)^3+2(\omega_3^2c_2)^2+(\omega_3^2c_2)}{2(1+\omega_3^2c_2)} & \dfrac{-1+(\omega_3^2c_2)^3+2(\omega_3^2c_2)^2+(\omega_3^2c_2)}{2(1+\omega_3^2c_2)} & -(\omega_3^2c_2) \\ \\
 \dfrac{-1+(\omega_3^2c_2)^3+2(\omega_3^2c_2)^2+(\omega_3^2c_2)}{2(1+\omega_3^2c_2)} & \dfrac{1+(\omega_3^2c_2)^3+2(\omega_3^2c_2)^2+(\omega_3^2c_2)}{2(1+\omega_3^2c_2)} & 0 \\ \\
 \dfrac{1+(\omega_3^2c_2)^3+2(\omega_3^2c_2)^2+(\omega_3^2c_2)}{2(1+\omega_3^2c_2)} & \dfrac{-1+(\omega_3^2c_2)^3+2(\omega_3^2c_2)^2+(\omega_3^2c_2)}{2(1+\omega_3^2c_2)} & 1 \\ \\
  \end{pmatrix}  }$$

\noindent
we obtain:
$$ M_{B'''}=\begin{pmatrix}
 1 & -1 & 1 \\
 1 & -1 & 1 \\
 0 & 0 & 0
  \end{pmatrix}. $$

\noindent
Note that $\vert P_{B'''B''} \vert = -2(\omega_3^2c_2) (1+\omega_3^2c_2)^2\neq 0$ because $\omega_3^2c_2 \neq 0$ and $\omega_{3}^2c_{2} \neq -1$.

\medskip
  \noindent
  {\bf Case 1.1.1.2} Suppose that $\omega_3=0$. \\

\noindent
Then $1+\omega_{2}^2c_1=0$ and necessarily $\omega_2^2c_1 \neq 0$. In this case,

\begin{eqnarray}\label{1.1.1.2}
\footnotesize{M_{B}=
 \begin{pmatrix}
1 &   \dfrac{-1}{\omega_2^2} & c_2 \\ \\
\omega_{2} & \dfrac{-1}{\omega_2} & c_2\omega_2 \\ \\
0 &  0 & 0
  \end{pmatrix}.}
  \end{eqnarray}

Again we will distinguish two cases depending on $c_2$.\\

Assume $c_2\neq 0$.
Take $B''=\{e_1,\omega_2e_2, \frac{1}{\sqrt{c_2}}e_3\}$. Then
 $ M_{B''}=
 \begin{pmatrix}
 1& -1 & 1 \\
 1& -1 & 1 \\
 0& 0 & 0 \\
  \end{pmatrix},$ which has already appeared.

\medskip

Suppose $c_2=0$. Then, for $B''=\{e_1, \omega_2e_2, e_3\}$ we have $ M_{B''}=
 \begin{pmatrix}
 1& -1 & 0 \\
 1& -1 & 0 \\
 0& 0 & 0 \\
  \end{pmatrix}$, matrix that has already appeared.\\

\noindent
{\bf Case 1.1.2} Suppose that $1+\omega_{3}^{2}c_2=0$.

\noindent
This implies that $\omega_3^2c_2 \neq 0$ and $\omega_2^2c_1 =0$.
\medskip

\noindent
{\bf Case 1.1.2.1} Assume $c_1 \neq 0$.

\noindent
This implies that $\omega_2=0$. Moreover, as $\omega_{3} \neq 0$, necessarily $c_2=\frac{-1}{\omega_3^2}$. If we take the natural basis $B''= \{e_1, e_3,e_2\}$, then
$ M_{B''}=
\tiny{\begin{pmatrix}
 1 & \dfrac{-1}{\omega_3^2}  &  c_1\\ \\
 \omega_{3} &  \dfrac{-1}{\omega_3}  &\omega_3c_1\\ \\
 0 & 0 & 0
   \end{pmatrix}} $ and we are as in  Case 1.1.1.2.

\medskip

\noindent
{\bf Case 1.1.2.2} Suppose $c_1=0$ and $\omega_2=0$. \\
\noindent
Take $B''=\{e_1,  \omega_3e_3,e_2\}$. Then
 $
  M_{B''}=
 \begin{pmatrix}
1 & -1 & 0 \\
 1 & -1 & 0 \\
0 & 0 & 0
  \end{pmatrix}$ again.
\medskip

 \noindent
 {\bf Case 1.1.2.3} Assume $c_1=0$ and $\omega_2 \neq 0$.

\noindent
Taking $B''=\{e_1,e_3,e_2\}$, we are in the same conditions as in Case 1.1.1.1 with $c_2=0$.
\medskip

\noindent
{\bf Case 1.2} Assume $1+\omega_2^2c_1+\omega_3^2c_2 \neq 0$.

\noindent
We will prove that $A^2$ has the extension property. In any subcase we will provide with a natural basis for $A$ one of which elements constitutes a natural basis of $A^2$.

\medskip

\noindent
{\bf Case 1.2.1} Suppose that $c_1=c_2=0$.\\
\noindent
Consider the natural basis $B'=\{e_1^2, e_2+e_3, 2e_2+e_3\}$. Then
 $$ M_{B'}=
 \begin{pmatrix}
 1& 0 & 0 \\
 0 & 0 & 0 \\
 0 & 0 & 0
  \end{pmatrix}. $$
We claim that this evolution algebra does not have a two-dimensional evolution ideal generated by one element. To prove this, consider $f=m e_1 + ne_2+pe_3$. Then the ideal $I$ that it generates is the linear span of  $\{f\}\cup \{m^ie_1\}_{i\in \N}$. In order for $I$ to have a natural basis with two elements, necessarily $m=0$, implying that the dimension of $I$ is one, a contradiction.
\medskip

\noindent
{\bf Case 1.2.2} Assume that $c_1=0$ and $c_2 \neq 0$. \\
\noindent
Then $1+c_2\omega_3^2 \neq 0$. For $B'=\{e_1+\omega_2e_2+\omega_3e_3, e_2, -\omega_3c_2e_1+e_2+e_3\}$ the structure matrix is

$$ M_{B'}=
 {\begin{pmatrix}
 1+c_2\omega_3^2& 0 & c_2(1+c_2\omega_3^2) \\
 0 & 0 & 0 \\
 0 & 0 & 0
  \end{pmatrix}}.$$

Note that $A^2$ has the extension property because the first element in $B'$ is $e_1^2$, which is a natural basis of $A^2$.

Consider $B''=\left\{ \frac{1}{1+c_{2}\omega_{3}^{2}}\ e_1, e_2,  \frac{1}{\sqrt{c_2}(1+c_{2}\omega_{3}^{2})}\ e_3\right\}$. Then
  $$ M_{B''}=
 \begin{pmatrix}
1 & 0 & 1 \\
 0 & 0 & 0 \\
 0 & 0 & 0
  \end{pmatrix}. $$

We claim that this evolution algebra does not have a degenerate two-dimensional evolution ideal generated by one element. Let $f={m} e_1 + {n} e_2+{p} e_3$. Then the ideal generated by $f$, say $I$,  is the linear span of
$\{f, {p} e_1, {m} e_1\} \cup \{({m}^2+{p}^2){m}^i e_1\}_{i\in \N\cup\{0\}} \cup \{({m}^2+{p}^2)^2{m}^i e_1\}_{i\in \N\cup\{0\}}$. After some computations, in order for $I$ to have dimension 2 and to be degenerated implies ${m}=0$ or ${p}=0$, a contradiction.
\medskip

\noindent
{\bf Case 1.2.3} If $c_1\neq 0$ and $c_2 \neq 0$.
\medskip

\noindent
{\bf Case 1.2.3.1} Assume $1+\omega_2^2 c_1 \neq 0$.

\noindent
For $B'$ the natural basis such that
$
P_{B'B}=\tiny{\left(
\begin{array}{ccc}
 1 & -\omega_2c_1 & \dfrac{-\omega_3c_2}{1+c_1\omega_2^2} \\ \\
 \omega_2 & 1 &  \dfrac{-\omega_3\omega_2c_2}{1+c_1\omega_2^2} \\ \\
\omega_3 & 0 & 1
\end{array}
\right)},
$
\noindent
we obtain that
$
M_{B'}=\tiny{\begin{pmatrix}
 1+\omega_{2}^2 c_{1}+\omega_{3}^2 c_{2} & c_{1} \left(1+c_{1}\omega_{2}^2\right) & \dfrac{c_{2} \left(1+\omega_{2}^2 c_{1}+\omega_{3}^2 c_{2}\right)}{\left(1+c_{1}\omega_{2}^2\right)} \\
 0 & 0 & 0 \\
 0 & 0 & 0
\end{pmatrix}}.
$
\medskip

Now, consider the natural basis $B''=\{f_1,f_2,f_3\}$ such that
$$P_{B''B'}=
\tiny{\begin{pmatrix}
 \dfrac{1}{1+\omega_{2}^2 c_{1}+\omega_{3}^2 c_{2}} & 0 & 0 \\
 0 & \dfrac{1}{\sqrt{c_{1} \left(1+c_{1}\omega_{2}^2\right) \left(1+\omega_{2}^2 c_{1}+\omega_{3}^2 c_{2}\right)}} & 0 \\
 0 & 0 & \dfrac{\sqrt{1+c_1\omega_2^2}}{\sqrt{c_2}(1+c_1\omega_2^2+c_2\omega_3^2)}
\end{pmatrix}}
$$

\noindent
and the structure matrix is:

$$ M_{B''}=
 \begin{pmatrix}
1 & 1 & 1 \\
 0 & 0 & 0 \\
 0 & 0 & 0
  \end{pmatrix}. $$

\noindent
It is not difficult to show that this evolution algebra does not have a degenerate two-dimensional evolution ideal generated by one element.\\

\noindent
{\bf Case 1.2.3.2} Assume $1+\omega_2^2 c_1 =0$.

\noindent
Then $\omega_2\omega_3c_1c_2 \neq 0$ and so $c_1=-1/\omega_2^2$. For $B'$ such that
 $  \tiny{P_{B'B}=
 \begin{pmatrix}
 1& \dfrac{-c_2}{2} & \dfrac{2-\omega_3^2c_2}{2} \\ \\
 \omega_2 & \dfrac{\omega_2c_2}{2} & \omega_2(1+\frac{1}{2}c_2\omega_3^2) \\ \\
 \omega_3 & \dfrac{1}{\omega_3} & \omega_3
  \end{pmatrix}}$
we have
 $ \footnotesize{M_{B'}=
 \begin{pmatrix}
\omega_3^2c_2 & \dfrac{c_2}{\omega_3^2} & -\omega_3^2c_2 \\
 0 & 0 & 0 \\
 0 & 0 & 0
  \end{pmatrix}}.$

 Now, we consider the natural basis $B''$ for which
 $ \tiny{P_{B''B'}=
 \begin{pmatrix}
 \dfrac{1}{\omega_3^2c_2} & 0 & 0 \\
 0 & \dfrac{1}{c_2} & 0 \\
 0 & 0 & \dfrac{\sqrt{-1}}{\omega_3^2c_2}
  \end{pmatrix}}. $
Then, the structure matrix is
$ M_{B''}=
 \begin{pmatrix}
1 & 1 & 1 \\
0& 0 & 0\\
0 & 0 & 0
\end{pmatrix},$ which has already appeared.
\medskip

\noindent
{\bf Case 1.2.4} Suppose that $c_1\neq0$ and $c_2=0$.

\noindent
Considering the natural basis $B''=\{e_{1},e_{3},e_{2} \}$ we obtain
$ \footnotesize{M_{B''}=
 \begin{pmatrix}
 1  & 0 & c_1\\
 \omega_3 & 0 & \omega_3c_1 \\
 \omega_2 & 0 & \omega_2c_1
  \end{pmatrix}}, $
and we are in the same conditions as in Case 1.2.1.2.
\medskip

\noindent
 {\bf Case 2} Suppose that $\omega_1=0.$\\

\noindent
The structure matrix of the evolution algebra is

$$ M_{B}=
 \begin{pmatrix}
0 & 0 & 0 \\
 \omega_{2} & \omega_{2}c_{1} & \omega_{2}c_2 \\
 \omega_{3} & \omega_{3}c_{1} & \omega_{3}c_2
  \end{pmatrix}. $$

\noindent
Necessarily there exists $i \in \{2,3\}$ such that $\omega_i\neq0$. Without loss in generality we assume $\omega_2 \neq 0$.
\medskip

\noindent
  {\bf Case 2.1} Assume $c_1 \neq 0.$

\noindent
Consider the natural basis $B''=\{e_2, e_3, e_1\}$. Then
$ \footnotesize{M_{B''}=
 \begin{pmatrix}
 \omega_2c_1 & \omega_2c_2 & 1 \\
 \omega_3c_1 & \omega_3c_2 & \omega_3 \\
0       & 0      & 0
  \end{pmatrix}}$ and we are in the same conditions as in Case 1.
\medskip

\noindent
{\bf Case 2.2} If $c_1=0.$

\noindent
{\bf Case 2.2.1} Assume $c_2\omega_3 \neq0.$

\noindent
Taking the natural basis $B''=\{e_3, e_2, e_1\}$, then
$\footnotesize{ M_{B''}=
 \begin{pmatrix}
 \omega_3c_2 & 0 & \omega_3 \\
 \omega_2c_2 & 0 & \omega_2\\
0       & 0      & 0
  \end{pmatrix}} $ and we are in the same conditions as in Case 1.
\medskip

\noindent
{\bf Case 2.2.2} Suppose that $c_2\omega_3=0.$
\medskip

\noindent
{\bf Case 2.2.2.1} Assume $c_2=0.$

\noindent
Take the natural basis $B'=\{\omega_2e_2+\omega_3e_3, \frac{1}{\omega_2}e_3, e_1 \}$. Then
$$ M_{B'}=
 \begin{pmatrix}
0 & 0 & 1 \\
0 & 0 & 0 \\
 0 & 0 & 0
 \end{pmatrix}. $$
 Note that $A^2$ has the extension property.
\medskip

\noindent
{\bf Case 2.2.2.2} Assume $c_2\neq0.$

\noindent
Then $\omega_3=0$. For $B'=\{\omega_2e_2, e_1,  \frac{1}{\sqrt{c_2}}e_3\}$ we have
$$ M_{B'}=
 \begin{pmatrix}
0 & 1 & 1 \\
 0 & 0 & 0  \\
 0 & 0 & 0
  \end{pmatrix}. $$
 In this case, $A^2$ has also the extension property.

We have completed the study of all the cases and will list them in a table. All of them produces evolution algebras $A$ such that ${\rm dim}(A)=3$ and ${\rm dim}(A^2)=1$. They are mutually non-isomorphic, as will be clear from the table. We specify the following properties that are invariant under isomorphisms of evolution algebras: {Whether or not} $A^2$ has the extension property, the dimension of the annihilator of $A$, and {whether or not} $A$ has a principal degenerate two-dimensional evolution ideal.

 In every of the cases listed below we have analyzed when $A^2$ has the extension property. To compute the dimension of the annihilator we have used \cite[Proposition 2.18]{CSV1}. We also specify in the table if the evolution algebra has or not a  two-dimensional evolution ideal, which is degenerate as an evolution algebra, and which is generated by one element.

Recall that for a commutative algebra $A$ the \emph{annihilator} of $A$, denoted by
${\rm ann}(A)$ is defined to be ${\rm ann}(A)=\{x \in A \ \vert \ xA=0\}.$
\newpage

{\footnotesize{
\begin{center}
\begin{tabular}{|c||c||c||c|}
\hline
&&&\\
Type & $A^2$ \rm{has the extension} &\rm{dimension of }&\rm{A has a principal degenerate}\\
 &  \rm{property} & \rm{ann(A)} & \rm{two-dimensional evolution ideal}\\
&&&\\
\hline
\hline
&&&\\[-0.2cm]

$
\begin{pmatrix}
1 & -1 & 1 \\
1 & -1 & 1 \\
0  & 0 & 0

\end{pmatrix}
$  &
No
&
0
 &

 $I=<e_3>$

 \\
&&&\\[-0.2cm]
\hline
\hline
&&&\\[-0.2cm]

$
\begin{pmatrix}
1 & -1 & 0 \\
1 & -1 & 0 \\
0  & 0 & 0

\end{pmatrix}
$  &
No
&
1
&

$I=<e_1+e_2+e_3>$

 \\
&&&\\[-0.2cm]
\hline
\hline
&&&\\[-0.2cm]

$
\begin{pmatrix}
1 & 1 & 1 \\
0 & 0 & 0 \\
0 & 0 & 0 \\

\end{pmatrix}
$  &
Yes
&
0
&

No

\\
&&&\\[-0.2cm]
\hline
\hline
&&&\\[-0.2cm]

$
\begin{pmatrix}
1 & 0 & 1 \\
0 & 0 & 0 \\
0  & 0 & 0

\end{pmatrix}
$  &
Yes
&
1
&

No

 \\
&&&\\[-0.2cm]
\hline
\hline
&&&\\[-0.2cm]

$
\begin{pmatrix}
1 & 0 & 0 \\
0 & 0 & 0 \\
0  & 0 & 0

\end{pmatrix}
$  &
Yes
&
2
&

No

\\
&&&\\[-0.2cm]
\hline
\hline
&&&\\[-0.2cm]

$
\begin{pmatrix}
0 & 1 & 1 \\
0 & 0 & 0 \\
0  & 0 & 0

\end{pmatrix}
$  &
Yes
&
1
&

$I=<e_3>$

\\
&&&\\[-0.2cm]
\hline
\hline
&&&\\[-0.2cm]

$
\begin{pmatrix}
0 & 0 & 1 \\
0 & 0 & 0 \\
0  & 0 & 0

\end{pmatrix}
$  &
Yes
&
2
&

$I=<e_3>$

 \\

&&&\\
\hline
\end{tabular}

{\footnotesize{TABLE 1. ${\rm dim}(A^2)=1$.}}\label{tabla1}
\end{center}}}

\noindent
{\bf Case ${\rm dim}(A^{2})=2$.}

\noindent
The first step is to compute the possible matrices $P_{B'B}$ for natural basis $B$ and $B'$. Without loss in generality, we may assume that there exists a natural basis $B=\{e_1,e_2,e_3\}$ such that

\begin{equation}\label{matrizrangodos}
M_B=\begin{pmatrix}
\omega_{11} & \omega_{12} & c_1\omega_{11}+c_2\omega_{12} \\
\omega_{21} & \omega_{22} & c_1\omega_{21}+c_2\omega_{22} \\
\omega_{31}  & \omega_{32} & c_1\omega_{31}+c_2\omega_{32}
\end{pmatrix}
\end{equation}

\noindent
for some $c_1, c_2\in \K$ with $\omega_{11}\omega_{22}-\omega_{12}\omega_{21}\neq 0$.\\

Let $B'$ be another natural basis and let $P_{B'B}$ be the change of basis matrix. Write
$$
P_{B'B}=\begin{pmatrix}
p_{11} & p_{12} & p_{13} \\
p_{21} & p_{22} & p_{23} \\
p_{31}  & p_{32} & p_{33}
\end{pmatrix}.$$

Since $B'$ is a natural basis, by \eqref{ecuac3} it verifies:

\begin{eqnarray}\label{ecuacio1}
\left\lbrace
\begin{array}{lll}
\omega_{11} p_{11} p_{12}+\omega_{12} p_{21} p_{22}+(\omega_{11} c_1+\omega_{12} c_2) p_{31} p_{32}  =& 0 \\
\omega_{21} p_{11} p_{12}+\omega_{22} p_{21} p_{22}+(\omega_{21} c_1+\omega_{22} c_2) p_{31} p_{32}=&0 \\
\omega_{31} p_{11} p_{12}+\omega_{32} p_{21} p_{22}+(\omega_{31} c_1+\omega_{32} c_2) p_{31} p_{32}= &0 \\
\end{array}
\right.
\end{eqnarray}

\begin{eqnarray}\label{ecuacio2}
\left\lbrace
\begin{array}{lll}
\omega_{11} p_{11} p_{13}+\omega_{12} p_{21} p_{23}+(\omega_{11} c_1+\omega_{12} c_2) p_{31} p_{33}= &0 \\
\omega_{21} p_{11} p_{13}+\omega_{22} p_{21} p_{23}+(\omega_{21} c_1+\omega_{22} c_2) p_{31} p_{33}= &0 \\
\omega_{31} p_{11} p_{13}+\omega_{32} p_{21} p_{23}+(\omega_{31} c_1+\omega_{32} c_2) p_{31} p_{33}= &0 \\
\end{array}
\right.
\end{eqnarray}

\begin{eqnarray}\label{ecuacio3}
\left\lbrace
\begin{array}{lll}
\omega_{11} p_{12} p_{13}+\omega_{12} p_{22} p_{23}+(\omega_{11} c_1+\omega_{12} c_2) p_{32} p_{33}=& 0 \\
\omega_{21} p_{12} p_{13}+\omega_{22} p_{22} p_{23}+(\omega_{21} c_1+\omega_{22} c_2) p_{32} p_{33}=& 0 \\
\omega_{31} p_{12} p_{13}+\omega_{32} p_{22} p_{23}+(\omega_{31} c_1+\omega_{32} c_2) p_{32} p_{33}= &0
\end{array}
\right.
\end{eqnarray}

\medskip

We consider the homogeneous system \eqref{ecuacio1} in the three variables $p_{11}p_{12}$, $p_{21}p_{22}$ and $p_{31}p_{32}$. Taking into account that the rank of this system is 2, we may compute its solutions as follows:

\begin{equation}\label{cuartas}
p_{11}p_{12}=\dfrac{
\left\vert
\begin{matrix}
-(\omega_{11}c_1+\omega_{12}c_2)p_{31}p_{32} & \omega_{12} \\ \\
-(\omega_{21}c_1+\omega_{22}c_2)p_{31}p_{32} & \omega_{22} \\ \\
\end{matrix}
\right\vert}{\omega_{11}\omega_{22}-\omega_{21}\omega_{12}}= -c_1p_{31}p_{32}
\end{equation}

\begin{equation}\label{quintas}
p_{21}p_{22}=\dfrac{
\left\vert
\begin{matrix}
\omega_{11} & -(\omega_{11}c_1+\omega_{12}c_2)p_{31}p_{32} \\ \\
\omega_{21}  & -(\omega_{21}c_1+\omega_{22}c_2)p_{31}p_{32} \\ \\
\end{matrix}
\right\vert}{\omega_{11}\omega_{22}-\omega_{21}\omega_{12}}= -c_2p_{31}p_{32}
\end{equation}

In an analogous way, we may consider the systems given in \eqref{ecuacio2} and \eqref{ecuacio3}. Their solutions can be computed as follows:

\begin{equation}\label{segundas}
p_{11}p_{13}=-c_1 p_{31}p_{33};\quad
p_{21}p_{23}=-c_2 p_{31}p_{33};
\end{equation}
\noindent
and
\begin{equation}\label{terceras}
p_{12}p_{13}=-c_1 p_{32}p_{33};\quad
p_{22}p_{23}=-c_2 p_{32}p_{33}.
\end{equation}

\medskip

 \noindent
 {\bf Case 1} $c_1c_2 \neq 0.$
\medskip

In this case the annihilator is zero because there cannot be a column of zeros (apply \cite[Proposition 2.18]{CSV1}). All the evolution algebras appearing in this case will have Property (2LI), that we define.

\begin{definition}
\rm
A three-dimensional evolution algebra $A$ is said to have \emph{Property} (2LI) if for any basis $\{e_1, e_2, e_3\}$ the set $\{e_i^2, e_j^2\}$ is linearly independent, for every $i, j\in \{1, 2, 3\}$ with $i\neq j$.
\end{definition}
 \medskip

In what follows we  prove, by way of contradiction, that $P_{B'B} \in S_3\rtimes (\K^\times)^3$. Note that (see \eqref{permutationmatrix}) elements in  $P_{B'B} \in S_3\rtimes (\K^\times)^3$ are those invertible matrices in $\mathcal M_3(\K)$ having two zeros in every row and every column.

Then, let $P_{B'B} \notin S_3\rtimes (\K^\times)^3$. Assume, for example,  that $p_{31}p_{32}p_{33}\neq 0$.
By \eqref{segundas} and \eqref{terceras} we have
 that $p_{11}p_{12}p_{13}\neq 0$ and $p_{21}p_{22}p_{23}\neq 0$. If we replace $p_{11}=\dfrac{-c_1p_{31}p_{32}}{p_{12}}$ and $p_{21}=\dfrac{-c_2p_{31}p_{32}}{p_{22}}$ in \eqref{segundas} we obtain $p_{13}=\dfrac{p_{33}p_{12}}{p_{32}}$ and $p_{23}=\dfrac{p_{33}p_{22}}{p_{32}}$. Finally, if we replace these two values in \eqref{terceras}, we get $p_{12}^2=-c_1p_{32}^2$ and $p_{22}^2=-c_2p_{32}^2$. Therefore, \\
$$p_{12}=\pm \sqrt{-c_1} \ p_{32} $$
$$p_{22}=\pm \sqrt{-c_2}\ p_{32} $$

\noindent
and
$$p_{13}=\pm \sqrt{-c_1}\ p_{33} $$
$$p_{23}=\pm \sqrt{-c_2}\ p_{33} $$
$$p_{21}=\pm \sqrt{-c_2}\ p_{31} $$
$$p_{11}=\pm \sqrt{-c_1}\ p_{31} $$

Now,
$$\vert P_{B'B} \vert =p_{11}p_{22}p_{33}+ p_{12}p_{23}p_{31}+ p_{13}p_{21}p_{32}-p_{13}p_{22}p_{31}-p_{21}p_{12}p_{33}-p_{11}p_{32}p_{23}=0.$$

This is a contradiction. Therefore $p_{31}p_{32}p_{33}=0$, hence,
there exists at least one $i \in \{1,2,3\}$ such that $p_{3i}=0$. We may suppose without loss in generality that $p_{31}=0$. This means that $p_{11}p_{12}=0$, $p_{21}p_{22}=0$, $p_{11}p_{13}=0$, $p_{21}p_{23}=0$ and, obviously, $p_{31}p_{32}=0$ and $p_{31}p_{33}=0$.

We claim that $P_{B'B}$ has two zero entries in every row and column. In other words, that $P_{B'B}\in S_3 \rtimes (\K^\times)^3$.

Assume  $p_{11}=0$. Since $\vert P_{B'B} \vert\neq 0$, necessarily $p_{21}\neq 0$. So, $p_{23}=p_{22}=0$ and consequently, using \eqref{terceras}, $p_{32}p_{33}=0$ and $p_{12}p_{13}=0$. We have $p_{22}=0$ and $p_{23}=0$.

In $p_{32}p_{33}=0$ we distinguish two cases. First, assume $p_{32}=0$. Then $p_{12}\neq 0$ (because $\vert P_{B'B} \vert\neq 0$). Since $p_{12}p_{13}=0$ we get $p_{13}=0$. Use again $\vert P_{B'B} \vert\neq 0$ to obtain $p_{33}\neq 0$ and we have proved that, in this case, $P_{B'B}\in S_3 \rtimes (\K^\times)^3$.
Second, assume $p_{32}\neq 0$. Then $p_{33}=0$ and $p_{13}\neq 0$ because $\vert P_{B'B} \vert\neq 0$. Use $p_{12}p_{13}=0$ to get, reasoning as before, that $p_{12}=0$ and $p_{32}\neq 0$. This proves again $P_{B'B}\in S_3 \rtimes (\K^\times)^3$.

Now, assume $p_{11}\neq0$. Then, by \eqref{cuartas} and \eqref{segundas}, we get $p_{12}=p_{13}=0$. So, $p_{12}p_{13}=0$, $p_{32}p_{33}=0$ and $p_{22}p_{23}=0$. Now we use \eqref{cuartas}, \eqref{segundas} and \eqref{terceras} to obtain $p_{31}p_{32}=0,$ $p_{31}p_{33}=0$ and $p_{32}p_{33}=0$. Taking into account this identities and $0\neq \vert P_{B'B}\vert =p_{11}p-{22}p_{33}- p_{11}p_{32}p_{23}$ we prove $P_{B'B}\in S_3 \rtimes (\K^\times)^3$ as claimed.

Now that we know the possible matrices for $P_{B'B}$, we may look for all the possible $M_B$. By Proposition \ref{prop:numerodeceros} \eqref{numerodeceros1} all the structure matrices representing the same evolution algebra have the same number of zero entries. This is the reason for studying the classification depending on the number of non-zero entries in $M_B$ (recall that $M_B$ is the matrix given in \eqref{matrizrangodos}).

We claim that the first case to be considered is the one for which $M_B$ has four non-zero entries. Indeed, fix our attention in the first and second columns in $M_B$ as given in \eqref{matrizrangodos}. The maximum number of zero entries in that columns is four. Now, the third column can have only one zero because $c_1$ and $c_2$ are non-zero and we have two non-zero entries in the first and second columns, which are neither in the same row nor in the second column as the rank of $M_B$ is two.
Taking into account  \eqref{producto2},  we may assume that two of non-zero entries are 1. In some cases, we will be able to place one or two more 1 in a third and fourth entries. The remaining non-zero entries will be parameters.

We explain now two types of  tables we include, called ``Table $m$" and ``Table $m^\prime$".
For ``Table $m$" (for $m\neq 8, 9, 10, 11, 12, 13$), we list in the first row (starting by the second column) the five permutation matrices different from the identity. As for the second row we start with an arbitrary structure matrix under the case we are considering. Then we apply the action of an element in $S_3$ (listed at the beginning of each column) and write in the corresponding row the obtaining matrix. We start the third row with a matrix under the case we are considering and not included in the second row, and continue in this way until we reach all the possibilities for this case. In order to make easier the understanding of the reader, we distinguish in color the different possibilities that we have.
As for the second type of tables we include, the reason is the following: For given parameters (those appearing in the listed matrices), matrices in the same row of a concrete table produce isomorphic evolution algebras. Matrices appearing in different rows correspond to non-isomorphic evolution algebras. Now the question is: For matrices in the same row having different parameters, are the corresponding evolution algebras isomorphic? To answer this question we include the second type of tables, ``Table $m^\prime$".
\medskip

\noindent
 {\bf Case 1.1} $M_B$ has four non-zero entries.
\medskip

\noindent
Note that there is, necessarily, a row with all its entries equal zero, because there is no a column with all its entries equal zero (as $c_1c_2 \neq 0$). For each possible row with three zeros, there are $\binom{6}{2} = 15$ possible places where to put two zeros in the remaining rows. Because  $A$ has Property {(2LI)} a row can not have two zeros. This happens 6 times. We have to eliminate the cases in which there is a zero column (three cases). Then we have $15-6-3=6$ cases for each possible  row with three zeros. Therefore we obtain 18 cases.

\noindent
There are only four families of mutually non-isomorphic evolution algebras: those whose structure matrix is in the first column of the table below.
\bigskip

\begin{center}

\scalebox{0.65}{

\begin{tabular}{|c||c||c||c||c||c|}

\hline
&&&&&\\
&(1,2)&(1,3)&(2,3)&(1,2,3)&(1,3,2)\\
&&&&&\\
\hline
\hline
&&&&&\\[-0.2cm]

$\color{ao(english)}{
\begin{pmatrix}
0 & 0 & 0 \\
0 & 1 & 1 \\
1  & 0 & c_1

\end{pmatrix}}
$  &

$\color{ao(english)}{
 \begin{pmatrix}
1 & 0 & 1 \\
0 & 0 & 0 \\
0  & 1 & c_1
\end{pmatrix}}$
&
$\color{ao(english)}{
\begin{pmatrix}
c_1 & 0 & 1 \\
1 & 1 & 0 \\
0  & 0 & 0
\end{pmatrix}} $
 &

 $\color{ao(english)}{
 \begin{pmatrix}
0 & 0 & 0 \\
1 & c_1 & 0 \\
0  & 1 & 1
\end{pmatrix}}$
&
$\color{ao(english)}{
\begin{pmatrix}
c_1 & 1 & 0 \\
0 & 0 & 0 \\
1  & 0 & 1
\end{pmatrix}}$
&
$\color{ao(english)}{
\begin{pmatrix}
1 & 1 & 0 \\
0 & c_1 & 1 \\
0  & 0 & 0
\end{pmatrix}}
$ \\
&&&&&\\[-0.2cm]
\hline
\hline
&&&&&\\[-0.2cm]

$\color{ao(english)}{
\begin{pmatrix}
0 & 0 & 0 \\
0 & 1 & c_2 \\
1  & 1 & 0

\end{pmatrix}}
$  &

$\color{ao(english)}{
 \begin{pmatrix}
1 & 0 & c_2 \\
0 & 0 & 0 \\
1  & 1 & 0

\end{pmatrix}}$
&
$\color{ao(english)}{
\begin{pmatrix}
0 & 1 & 1 \\
c_2 & 1 & 0 \\
0  & 0 & 0
\end{pmatrix}} $
 &

 $\color{ao(english)}{
 \begin{pmatrix}
0 & 0 & 0 \\
1 & 0 & 1 \\
0  & c_2 & 1
\end{pmatrix}}$
&
$\color{ao(english)}{
\begin{pmatrix}
0 & 1 & 1 \\
0 & 0 & 0 \\
c_2  & 0 & 1
\end{pmatrix}}
$
&
$\color{ao(english)}{
\begin{pmatrix}
1 & c_2 & 0 \\
1 & 0 & 1 \\
0  & 0 & 0
\end{pmatrix}}
$ \\
&&&&&\\[-0.2cm]
\hline
\hline
&&&&&\\[-0.2cm]

$\color{ao(english)}{
\begin{pmatrix}
0 & 0 & 0 \\
1 & 0 & c_1 \\
1 & 1 & 0

\end{pmatrix}}
$  &

$\color{ao(english)}{
 \begin{pmatrix}
0 & 1 & c_1 \\
0 & 0 & 0 \\
1  & 1 & 0

\end{pmatrix}}$
&
$\color{ao(english)}{
\begin{pmatrix}
0 & 1 & 1 \\
c_1 & 0 & 1 \\
0  & 0 & 0
\end{pmatrix}} $
 &
${
\begin{pmatrix}
0 & 0 & 0 \\
1 & 0 & 1 \\
1  & c_1 & 0
\end{pmatrix}}$
&
${
\begin{pmatrix}
0 & 1 & 1 \\
0 & 0 & 0 \\
c_1  & 1 & 0
\end{pmatrix}}
$
&
${
\begin{pmatrix}
0 & c_1 & 1 \\
1 & 0 & 1 \\
0  & 0 & 0
\end{pmatrix}}
$ \\
&&&&&\\[-0.2cm]
\hline
\hline
&&&&&\\[-0.2cm]

$\color{ao(english)}{
\begin{pmatrix}
0 & 0 & 0 \\
1 & 1 & 0 \\
1  & 0 & c_1

\end{pmatrix}}
$  &

$\color{ao(english)}{
 \begin{pmatrix}
1 & 1 & 0 \\
0 & 0 &  0 \\
0  & 1 & c_1

\end{pmatrix}}$
&
$\color{ao(english)}{
\begin{pmatrix}
c_1 & 0 & 1 \\
0 & 1 & 1 \\
0  & 0 & 0

\end{pmatrix} }$
 &
 ${
 \begin{pmatrix}
0 & 0 & 0 \\
1 & c_1 & 0 \\
1  & 0 & 1
\end{pmatrix}}$
&
${
\begin{pmatrix}
c_1 & 1 & 0 \\
0 & 0 & 0 \\
0  & 1 & 1
\end{pmatrix}}
$
&
${
\begin{pmatrix}
1 & 0 & 1 \\
0 & c_1 & 1 \\
0  & 0 & 0
\end{pmatrix}}
$ \\

&&&&&\\
\hline
\end{tabular}}

\footnotesize{TABLE 2. ${\rm dim}(A^2)=2$; $c_1, c_2\neq 0$; ${\rm dim}({\rm ann}(A))=0$; $A$ has Property (2LI);}\label{tabla2}

\footnotesize{four non-zero entries.}
\end{center}
\medskip

Now, we study if every resulting family of evolution algebras contains isomorphic evolution algebras. The procedure we have used is the following: we start with one $M_B$ and study if there are matrices $P_{B'B}$ such that $M_{B'}$ is in the same family. For the computations we have used Mathematica. The program can be found in Appendix \ref{apendiceb}.
%
This explanation  serves for all the cases.

\begin{center}

\scalebox{0.7}{

\begin{tabular}{|c||c||c|}

\hline
&&\\
$M_{B}$& $P_{B'B}$ & $M_{B'}$\\
&&\\
\hline
\hline
&&\\

${
\begin{pmatrix}
0 & 0 & 0 \\
0 & 1 & 1 \\
1  & 0 & c_1

\end{pmatrix}}
$  &

${
 \begin{pmatrix}
\sqrt{-1} & 0 & 0 \\
0 & 1 & 0 \\
0  & 0 & -1
\end{pmatrix}}$
&
${
\begin{pmatrix}
0 & 0 & 0 \\
0 & 1 & 1 \\
1  & 0 & -c_1
\end{pmatrix}} $
 \\
&&\\
\hline
\hline
&&\\

${
\begin{pmatrix}
0 & 0 & 0 \\
1 & 0 & c_1 \\
1  & 1 & 0

\end{pmatrix}}
$  &

${
 \begin{pmatrix}
\dfrac{1}{\sqrt{c_1}} & 0 & 0 \\
0 &  0  & \dfrac{1}{c_1}\\
0  & \dfrac{1}{c_1} &  0
\end{pmatrix}}$
&
${
\begin{pmatrix}
0 & 0 & 0 \\
1 & 0 & \dfrac{1}{c_1}  \\
1  & 1 & 0
\end{pmatrix}} $
\\
&&\\
\hline
\hline
&&\\

${
\begin{pmatrix}
0 & 0 & 0 \\
1 & 1 & 0 \\
1  & 0 & c_1

\end{pmatrix}}
$  &

${
 \begin{pmatrix}
\dfrac{1}{\sqrt{c_1}} & 0 & 0 \\
0 &  0  & \dfrac{1}{c_1}\\
0  & \dfrac{1}{c_1} &  0
\end{pmatrix}}$
&
${
\begin{pmatrix}
0 & 0 & 0 \\
1 & 1 & 0 \\
1  & 0 & \dfrac{1}{c_1}
\end{pmatrix}} $
\\
&&\\
\hline
\end{tabular}}

\footnotesize{TABLE $2^\prime$.}
\end{center}

\noindent
{\bf Case 1.2} $M_B$ has five non-zero entries.
\medskip

\noindent
The structure matrix must have a zero column. So, for each possible zero row, there exist $\binom{6}{3} = 6$ possible places where to write the remaining zero. Therefore, there are $6 \cdot 3=18$ cases. They appear in the list that follows. The parametric families of mutually non-isomorphic evolution algebras are three and appear in the first column of the table below.

\begin{center}

\scalebox{0.7}{

\begin{tabular}{|c||c||c||c||c||c|}
\hline
&&&&&\\
&(1,2)&(1,3)&(2,3)&(1,2,3)&(1,3,2)\\
&&&&&\\
\hline
\hline
&&&&&\\[-0.2cm]
$\color{ao(english)}{
\begin{pmatrix}
0 & 0 & 0 \\
0 & 1 & 1 \\
1  & \alpha & c_1+\alpha

\end{pmatrix}}
$  &

$\color{ao(english)}{
 \begin{pmatrix}
1 & 0 & 1 \\
0 & 0 & 0 \\
\alpha  & 1 & c_1 +\alpha

\end{pmatrix}}$
&
$\color{ao(english)}{
\begin{pmatrix}
c_1 + \alpha & \alpha & 1 \\
1 & 1 & 0 \\
0  & 0 & 0

\end{pmatrix} }$
 &

 $\color{ao(english)}{
 \begin{pmatrix}
0 & 0 & 0 \\
1 & c_1 + \alpha & \alpha \\
0  & 1 & 1
\end{pmatrix}}$
&
$\color{ao(english)}{
\begin{pmatrix}
c_1 + \alpha  & 1 & \alpha \\
0 & 0 & 0 \\
1  & 0 & 1
\end{pmatrix}}$
&
$\color{ao(english)}{
\begin{pmatrix}
1 & 1 & 0 \\
\alpha & c_1 + \alpha & 1 \\
0  & 0 & 0
\end{pmatrix}}
$ \\
&&&&&\\[-0.2cm]
\hline
\hline
&&&&&\\[-0.2cm]
$\color{ao(english)}{
\begin{pmatrix}
0 & 0 &  0 \\
1 & 0 & c_1 \\
1  & 1 & c_1 + c_2
\end{pmatrix}}
$  &

$\color{ao(english)}{
 \begin{pmatrix}
0 & 1 & c_1 \\
0 & 0 & 0 \\
1  & 1 & c_1 + c_2

\end{pmatrix}}$
&
$\color{ao(english)}{
\begin{pmatrix}
c_1 + c_2 & 1 & 1 \\
c_1 & 0 & 1 \\
0  & 0 & 0
\end{pmatrix} }$
 &

 $\color{ao(english)}{
 \begin{pmatrix}
0 & 0 & 0 \\
1 & c_1 + c_2 & 1 \\
1  & c_1 & 0
\end{pmatrix}}$
&
$\color{ao(english)}{
\begin{pmatrix}
c_1 + c_2 & 1 & 1 \\
0 & 0 & 0 \\
c_1  & 1 & 0
\end{pmatrix}}$
&
$\color{ao(english)}{
\begin{pmatrix}
0 & c_1 & 1 \\
1 & c_1 + c_2 & 1 \\
0  & 0 & 0
\end{pmatrix}}
$ \\
&&&&&\\[-0.2cm]
\hline
\hline
&&&&&\\[-0.2cm]
$\color{ao(english)}{
\begin{pmatrix}
0 & 0 & 0 \\
1 & 1 & c_1 + c_2 \\
1  & 0 & c_1

\end{pmatrix}}
$  &

$\color{ao(english)}{
 \begin{pmatrix}
1 & 1 & c_1 + c_2 \\
0 & 0 & 0 \\
0 & 1 & c_1

\end{pmatrix}}$
&
$\color{ao(english)}{
\begin{pmatrix}
c_1 & 0 & 1 \\
c_1 + c_2 & 1 & 1 \\
0  & 0 & 0

\end{pmatrix} }$
 &

 $\color{ao(english)}{
 \begin{pmatrix}
0 & 0 & 0 \\
1 & c_1 & 0 \\
1  & c_1 + c_2 & 1
\end{pmatrix}}$
&
$\color{ao(english)}{
\begin{pmatrix}
c_1 & 1 & 0 \\
0 & 0 & 0 \\
c_1 + c_2  & 1 & 1
\end{pmatrix}}
$
&
$\color{ao(english)}{
\begin{pmatrix}
1 & c_1 + c_2 & 1 \\
0 & c_1 & 1 \\
0  & 0 & 0
\end{pmatrix}}
$ \\
&&&&&\\
\hline
\end{tabular}}

\footnotesize{TABLE 3. ${\rm dim}(A^2)=2$; $\alpha, c_1, c_1+\alpha, c_1+c_2 \neq 0$; ${\rm dim}({\rm ann}(A))=0$;  }

\footnotesize{$A$ has Property (2LI). Five non-zero entries.}
\label{tabla3}
\end{center}

\newpage

\begin{center}

\scalebox{0.7}{

\begin{tabular}{|c||c||c|}

\hline
&&\\
$M_{B}$& $P_{B'B}$ & $M_{B'}$\\
&&\\
\hline
\hline
&&\\

${
\begin{pmatrix}
0 & 0 & 0 \\
0 & 1 & 1 \\
1  & \alpha & c_1 + \alpha

\end{pmatrix}}
$  &

${
 \begin{pmatrix}
\sqrt{-1} & 0 & 0 \\
0 & 1 & 0 \\
0  & 0 & -1
\end{pmatrix}}$
&
${
\begin{pmatrix}
0 & 0 & 0 \\
0 & 1 & 1 \\
1  & -\alpha & -c_1 -\alpha
\end{pmatrix}} $
 \\
 &&\\
\hline

\end{tabular}}

\footnotesize{TABLE $3^\prime$.}
\end{center}

\bigskip

\noindent
{\bf Case 1.3} $M_B$ has six non-zero entries.
\medskip

\noindent
There exists $\binom{9}{3} = 84$ possibilities to place three zeros. As the structure matrix has Property (2LI), it can have neither two zeros in a row nor a zero column. So, for each place, we eliminate the six cases where to write two zeros in a row. Then, we remove $9 \cdot 6+3 =57$ cases. Therefore, we have 27 cases. The mutually non-isomorphic parametric families of evolution algebras are seven and are listed in the first column of the table below.

\noindent
In some cases, the parameters $\alpha$, $\beta$ and $c_1$ must satisfy certain conditions:

In the fourth row: $c_1 \alpha \beta +1 \neq 0$. In the fifth row: $c_1 \alpha + \beta \neq 0$ and in the seventh row: $c_2 \alpha + c_1 \neq 0$.

\begin{center}
\scalebox{0.6}{
}

{\footnotesize {TABLE $7^\prime$.}}
\end{center}
\medskip

\noindent
{\bf Case 2} $c_1=0$ and $c_2\neq0$.

\medskip

\noindent
Note that if we consider the new natural basis $\{e'_1, e'_2,e'_3\}$ with $e'_1=e_1$, $e'_2=\sqrt{c_2}e_2-e_3$ and $e'_3=\sqrt{c_2}e_2+e_3$ we obtain that $(e'_{2})^2=(e'_{3})^2$. By abusing of notation, we may assume that the natural basis concerned is $\{e_1,e_2,e_3\}$ with $e_2^2=e_{3}^{2}$.

\noindent
Note that in this case the dimension of the annihilator of the evolution algebra is zero. We will see that the possible change of basis matrices are precisely two elements in $S_3\rtimes (\K^\times)^3$ (we consider only those for which $e_3^2=e_2^2$) and one more not in $S_3\rtimes (\K^\times)^3$ that we will specify.

In this case, the equations \eqref{cuartas}, \eqref{quintas}, \eqref{segundas} and \eqref{terceras} are as follows:

\begin{align*}
p_{11}p_{12}=0;\quad p_{21}p_{22}=- p_{31}p_{32};\\
p_{11}p_{13}=0; \quad p_{21}p_{23}=- p_{31}p_{33};\\
p_{12}p_{13}=0; \quad p_{22}p_{23}=- p_{32}p_{33}.\\
\end{align*}

\noindent
We may suppose that $p_{11}=p_{12}=0$. \\

\noindent
Assume that $p_{31}p_{32}p_{33}\neq 0$. This implies that $p_{21}p_{22}p_{23}\neq 0$. As $p_{21}=\frac{- p_{31}p_{32}}{p_{22}}$, $p_{23}=\frac{p_{33}p_{22}}{p_{32}}$. Then $p_{22}=\pm \sqrt{-1}p_{32}$, $p_{23}=\pm \sqrt{-1}p_{33}$ and $p_{21}=\pm \sqrt{-1}p_{31}$. But, in these conditions $\vert P_{B'B} \vert =0$. Therefore there exists at least one $i \in\{1,2,3\}$ such that $p_{3i}=0$. \\

\noindent
If $p_{31}=0$, then $p_{21}p_{22}=0$ and $p_{21}p_{23}=0$. Since $p_{21}\neq 0$, necessarily $p_{22}=p_{23}=0$, implying $p_{32}p_{33}=0$. Consequently, $P_{B'B} \in S_3\rtimes (\K^\times)^3$.\\

\noindent
If $p_{31}\neq0$ and $p_{32}=0$, then $p_{21}p_{22}=0 $ and $p_{22}p_{23}=0$. This implies  $p_{21}=p_{23}=p_{33}=0$ and again $P_{B'B}\in S_3\rtimes (\K^\times)^3$.\\

\noindent
If $p_{31}p_{32}\neq0$ and $p_{33}=0$, then $p_{22}p_{23}=0$ and $p_{21}p_{23}=0$. Necessarily $p_{23}=0$. On the other hand, as $p_{31}p_{32} \neq 0$, $p_{21}p_{22}\neq 0$. So, $p_{22}=\frac{-p_{31}p_{32}}{p_{21}}$ and

\begin{eqnarray} \label{matrices1}
P_{B'B}=
\begin{pmatrix}
0 & 0 & p_{13}\\
p_{21} & \dfrac{-p_{31}p_{32}}{p_{21}} &  0\\
p_{31} & p_{32} & 0
\end{pmatrix},
\end{eqnarray}

\noindent
with $p_{13}p_{22}p_{31}p_{32} \neq 0$ and $ p_{31}^2+p_{21}^2 \neq 0$ in order to have $\vert P_{B'B} \vert \neq 0$.

If we suppose that $p_{11}=p_{13}=0$, reasoning in the same way as before, we obtain that the matrices $P_{B'B}$ are in $S_3\rtimes (\K^\times)^3$ or they are as follows:

\begin{eqnarray}\label{matrices2}
P_{B'B}=
\begin{pmatrix}
0 & p_{12} & 0 \\
p_{21} & 0 & \dfrac{- p_{33}p_{31}}{p_{21}}\\
p_{31} & 0 & p_{33}
\end{pmatrix},
\end{eqnarray}

\noindent
with $p_{12}p_{21}p_{31}p_{33} \neq 0$ and $p_{31}^2+p_{21}^2 \neq 0$.

Finally, if $p_{12}=p_{13}=0$, we obtain that the different matrices $P_{B'B}$ that appear are in $S_3\rtimes (\K^\times)^3$ or are of the form:

\begin{eqnarray}\label{matrices3}
P_{B'B}=
\begin{pmatrix}
p_{11} & 0 & 0 \\
0 & p_{22} & \dfrac{-p_{32}p_{33}}{p_{22}}\\
0 & p_{32} & p_{33}
\end{pmatrix},
\end{eqnarray}

\noindent
with $p_{11}p_{22}p_{32}p_{33} \neq 0$ and $ p_{32}^2+p_{22}^2 \neq 0$.
\smallskip

\noindent
If
$
P_{B'B}=
\begin{pmatrix}
0 & 0 & p_{13} \\
p_{21} & \dfrac{-p_{31}p_{32}}{p_{21}} & 0 \\
p_{31} & p_{32} & 0
\end{pmatrix}
$ then

$$
\tiny
M_{B'}=
\begin{pmatrix}
\omega_{22} p_{21} + \omega_{32}p_{31}  & \dfrac{p_{32}^2 (\omega_{22} p_{21} + \omega_{32}p_{31})}{p_{21}^2} & \dfrac{p_{13}^2(\omega_{21} p_{21} + \omega_{31}p_{31})}{p_{21}^2+p_{31}^2}\\ \\
\dfrac{p_{21} (\omega_{32} p_{21} - \omega_{22}p_{31})}{p_{32}} & \dfrac{p_{32} (\omega_{32} p_{21} - \omega_{22}p_{31})}{p_{21}} & \dfrac{p_{13}^2p_{21}(\omega_{31} p_{21} - \omega_{21}p_{31})}{p_{32}(p_{21}^2+p_{31}^2)} \\ \\
\dfrac{\omega_{12} ( p_{21}^2 + p_{31}^2)}{p_{13}}  & \dfrac{\omega_{12}p_{32}^2 ( p_{21}^2 + p_{31}^2)}{p_{13}p_{21}^2} & \omega_{11} p_{13} \\
\end{pmatrix}.$$

\medskip

\noindent
If
$
P_{B'B}=
\begin{pmatrix}
0 & p_{12} & 0 \\
p_{21} & 0 & \dfrac{-p_{31}p_{33}}{p_{21}}  \\
p_{31} & 0 & p_{33}
\end{pmatrix}
$ then

$$
\tiny
M_{B'}=
\begin{pmatrix}
\omega_{22} p_{21} + \omega_{32}p_{31}  & \dfrac{p_{12}^2 (\omega_{21} p_{21} + \omega_{31}p_{31})}{p_{21}^2+p_{31}^2} & \dfrac{p_{33}^2(\omega_{22} p_{21} + \omega_{32}p_{31})}{p_{21}^2}\\ \\
\dfrac{\omega_{12} ( p_{21}^2 + p_{31}^2)}{p_{12}} & \omega_{11}p_{12} & \dfrac{\omega_{12}p_{33}^2(p_{21}^2 + p_{31}^2)}{p_{12}p_{21}^2} \\ \\
\dfrac{p_{21} ( \omega_{32}p_{21} - \omega_{22}p_{31})}{p_{33}}  & \dfrac{p_{12}^2p_{21} ( \omega_{31}p_{21} -\omega_{21} p_{31})}{p_{33}(p_{21}^2 + p_{31}^2)} & \dfrac{p_{33} ( \omega_{32}p_{21} - \omega_{22}p_{31})}{p_{21}}  \\
\end{pmatrix}.$$

\noindent
If
$
P_{B'B}=
\begin{pmatrix}
p_{11} & 0 & 0 \\
0 & p_{22} & \dfrac{-p_{32}p_{33}}{p_{22}}  \\
0 & p_{32} & p_{33}
\end{pmatrix}
$ then

$$
\tiny
M_{B'}=
\begin{pmatrix}
\omega_{11} p_{11}   & \dfrac{\omega_{12}(p_{22}^2 + p_{32}^2)}{p_{11}} & \dfrac{\omega_{12}p_{33}^2(p_{22}^2 + p_{32}^2)}{p_{11}p_{22}^2}\\ \\
\dfrac{p_{11}^2 (\omega_{21} p_{22} + \omega_{31}p_{32})}{p_{22}^2+p_{32}^2} & \omega_{22} p_{22} + \omega_{32}p_{32} & \dfrac{p_{33}^2(\omega_{22} p_{22} + \omega_{32}p_{32})}{p_{22}^2} \\ \\
\dfrac{p_{11}^2p_{22}(\omega_{31} p_{22}- \omega_{21}p_{32})}{p_{33}(p_{22}^2+p_{32}^2)}  & \dfrac{p_{22}( \omega_{32}p_{22} - \omega_{22}p_{32})}{p_{33}} &  \dfrac{p_{33}( \omega_{32}p_{22} - \omega_{22}p_{32})}{p_{22}}\\
\end{pmatrix}.$$

\noindent
Taking into account that we were assuming  $e_{3}^{2}=e_{2}^2$, then the possible change of basis matrices are the following:

\begin{equation}\label{conjuntoc1zero}
\footnotesize{
\left\{
\begin{pmatrix}
p_{11} & 0 &  0 \\
0 & p_{22} & 0  \\
0 & 0 & p_{33}
\end{pmatrix},
\begin{pmatrix}
p_{11} & 0 &  0 \\
0 & 0 & p_{23}  \\
0 & p_{32} & 0
\end{pmatrix},
\begin{pmatrix}
p_{11} & 0 & 0 \\
0 & p_{22} & \dfrac{-p_{32}p_{33}}{p_{22}}\\
0 & p_{32} & p_{33}
\end{pmatrix}\  \vert\ p_{11}, p_{22}, p_{32}, p_{33}  \in \K^\times \right\}}.
\end{equation}

In what follows we will classify in three steps: we start by taking into account the first two families of change of basis matrices of the set \eqref{conjuntoc1zero} which leave invariant the number of non-zero entries in the first and second columns. Then, we will analyze if the resulting families of evolution algebras are or not isomorphic under the action of one matrix of the third family in  \eqref{conjuntoc1zero}, i.e., we will see if some families of evolution algebras are included into other families when applying the change of basis matrices of the third type. Finally, we will analyze, for each of the resulting parametric families, if their algebras are mutually isomorphic.

We list the different matrices into tables taking in account the number of zeros in the first and second columns. Each of these tables will receive the name of ``Figure $m$". According to \eqref{producto2} we will write as many 1 as possible and the others non-zero entries will be arbitrary parameters $\alpha$, $\beta$, $\gamma$ and $\lambda$ under the restriction $e_2^2=e_3^2$. We start by the first one and applying the action of the elements: \\

\begin{center}
$\begin{pmatrix}
1 & 0 &  0 \\
0 & 0 & 1  \\
0 & 1 & 0
\end{pmatrix}
$,    $\;  \; \; \, \, Q=\tiny \begin{pmatrix}
p_{11} & 0 & 0 \\
0 & p_{22} & \dfrac{- p_{32}p_{33}}{p_{22}}\\
0 & p_{32} & p_{33}
\end{pmatrix}
$.
\end{center}

\noindent
with  $p_{11}$, $p_{22}$, $p_{32}$, $p_{33}$ $\in \K^\times$ and $ p_{32}^2+p_{22}^2 \neq 0$.
\medskip

\noindent
{\bf Case 2.1} $M_B$ has two non-zero entries in the first and second columns.
\medskip

 There are $\binom{6}{4} = 15$ possible places where to put four zeros. Since some of the resulting matrices have rank 1,  they must be removed from the 15 cases. This happens whenever the first or the second columns is zero (2 cases) and the remaining zeros can be settled in three different places. This produces 6 cases. We also eliminate the cases in which two different rows are zero (3 options). Therefore we have $15-6-3 = 6$ different matrices classified in 3 types. Their structure matrices appear in the first column of the table that follows.

\begin{center}
\scalebox{0.7}{
\begin{tabular}{|c||c||c||c|}

\hline
&& &\\
Type & &(2,3)& Q
\\
&&&\\[-0.2cm]
\hline
\hline
&&&\\[-0.2cm]
1
&
$\color{ao(english)}{
\begin{pmatrix}
1 & 0 & 0 \\
0 & 1 & 1\\
0  & 0 & 0
\end{pmatrix}}
$  &

$\color{ao(english)}{
 \begin{pmatrix}
1 & 0 & 0 \\
0 & 0 & 0 \\
0  & 1 & 1
\end{pmatrix}}$
&
$\tiny
\begin{pmatrix}
p_{11} & 0 & 0 \\
0 & p_{22} & \dfrac{p_{33}^2}{p_{22}} \\ \\
0  & \dfrac{-p_{22}p_{32}}{p_{33}} & \dfrac{-p_{32}p_{33}}{p_{22}}
\end{pmatrix} $ \\
&&&\\[-0.2cm]
\hline
\hline
&&&\\[-0.2cm]
2
&
$\color{ao(english)}{
\begin{pmatrix}
0 & 1 & 1 \\
1 & 0 & 0 \\
0  & 0 & 0
\end{pmatrix}}
$  &
$\color{ao(english)}{
 \begin{pmatrix}
0 & 1 & 1 \\
0 & 0 & 0 \\
1  & 0 & 0
\end{pmatrix}}$
&
$ \tiny{
\begin{pmatrix}
0 & \dfrac{p_{22}^2+p_{32}^2}{p_{11}} & \dfrac{(p_{22}^2+p_{32}^2)p_{33}^2}{p_{11}p_{22}^2} \\
\dfrac{p_{11}^2 p_{22}}{p_{22}^2+p_{32}^2} & 0 & 0 \\ \\
\dfrac{-p_{11}^2p_{22}p_{32}}{p_{33}(p_{22}^2+p_{32}^2)}  & 0 & 0
\end{pmatrix}}
$ \\
&&&\\[-0.2cm]
\hline
\hline
&&&\\[-0.2cm]
3
&
$\color{ao(english)}{
\begin{pmatrix}
0 & 0 & 0 \\
1 & 0 & 0 \\
0  & 1 & 1
\end{pmatrix}}$
&
 $\color{ao(english)}{
 \begin{pmatrix}
0 & 0 & 0 \\
0 & 1 & 1 \\
1  & 0 & 0
\end{pmatrix}}$
&
 $\tiny
 \begin{pmatrix}
0 & 0 & 0 \\
\dfrac{p_{11}^2p_{22}}{p_{22}^2+p_{32}^2} & p_{32} & \dfrac{p_{32}p_{33}^2}{p_{22}^2} \\ \\
\dfrac{-p_{11}^2p_{22}p_{32}}{p_{33}(p_{22}^2+p_{32}^2)}  & \dfrac{p_{22}^2}{p_{33}} & p_{33}
\end{pmatrix}$ \\
&&&\\
\hline
\end{tabular}}

{\footnotesize {FIGURE 1. ${\rm dim}(A^2)=2$; ${\rm dim}({\rm ann}(A))=0$;}}\label{tabla7} \\
\footnotesize {$A$ has not Property (2LI); two non-zero entries in the first and second columns.}

\end{center}
\newpage

 \noindent
 {\bf Case 2.2} $M_B$ has three non-zero entries in the first and second columns.\\
\medskip

There exist $\binom{6}{3} = 20$ possible places where to write three zeros. We remove the matrices which have rank 1. This happens 2 times: when the first or the second column is zero. Therefore we have $20-2 = 18$ cases. There are 10 types listed in the two tables below.

\begin{center}

\scalebox{0.7}{
\begin{tabular}{|c||c||c||c|}

\hline
&&&\\
Type & &(2,3)& Q \\
&&&\\[-0.2cm]
\hline
\hline
&& &\\[-0.2cm]
4
&
$\color{ao(english)}{
\begin{pmatrix}
1 & 0 & 0 \\
0 & 1 & 1 \\
0  & \alpha & \alpha
\end{pmatrix}}
$  &

$
\begin{pmatrix}
1 & 0 & 0 \\
0 & \alpha & \alpha \\
0  & 1 & 1
\end{pmatrix}
$
&
$\tiny
\begin{pmatrix}
p_{11} & 0 & 0 \\
0 & p_{22}+\alpha p_{32} & \dfrac{p_{33}^2(p_{22}+\alpha p_{32})}{p_{22}^2} \\
0  & \dfrac{p_{22}(\alpha p_{22}-p_{32})}{p_{33}} &\dfrac{p_{33}(\alpha p_{22}-p_{32})}{p_{22}}
\end{pmatrix} $
\\
&&&\\[-0.2cm]
\hline
\hline
&&&\\[-0.2cm]
5
&
$\color{ao(english)}{
\begin{pmatrix}
1 & 0 & 0 \\
\alpha & 1 & 1 \\
0  & 0 & 0
\end{pmatrix}}
$  &

$\color{ao(english)}{
\begin{pmatrix}
1 & 0 & 0 \\
0 & 0 & 0 \\
\alpha  & 1 & 1
\end{pmatrix}}$
&
$\tiny
\begin{pmatrix}
p_{11} & 0 & 0 \\
\dfrac{\alpha p_{11}^2 p_{22}}{p_{22}^2+p_{32}^2} & p_{22} & \dfrac{p_{33}^2}{p_{22}} \\ \\
\dfrac{-\alpha p_{11}^2 p_{22}p_{32}}{p_{33}(p_{22}^2+p_{32}^2)}  & \dfrac{-p_{22} p_{32}}{p_{33}} & \dfrac{- p_{32}p_{33}}{p_{22}}
\end{pmatrix}$
\\
&&&\\[-0.2cm]
\hline
\hline
&&&\\[-0.2cm]
6
&
$\color{ao(english)}{
\begin{pmatrix}
0 & 1 & 1 \\
1 & 0 & 0 \\
0  & \alpha & \alpha
\end{pmatrix}}$
&
$\color{ao(english)}{
\begin{pmatrix}
0 & 1 & 1 \\
0 & \alpha & \alpha \\
1  & 0 & 0
\end{pmatrix}}$
&
$\tiny
\begin{pmatrix}
0 & \dfrac{p_{22}^2+p_{32}^2}{p_{11}} & \dfrac{ p_{33}^2(p_{22}^2+p_{32}^2)}{p_{11}p_{22}^2} \\ \\
\dfrac{p_{11}^2p_{22}}{p_{22}^2+p_{32}^2} & \alpha p_{32} & \dfrac{{\alpha}p_{32}p_{33}^2}{p_{22}^2} \\ \\
\dfrac{-p_{11}^2p_{22}p_{32}}{p_{33}(p_{22}^2+p_{32}^2)}  & \dfrac{\alpha p_{22}^2}{p_{33}} & \alpha p_{33}
\end{pmatrix}
$ \\
&&&\\[-0.2cm]
\hline
\hline
&&&\\[-0.2cm]
7
&
$\color{ao(english)}{
\begin{pmatrix}
0 & 1 & 1  \\
\alpha & 1 & 1 \\
0  & 0 & 0
\end{pmatrix}}
$  &

$\color{ao(english)}{
\begin{pmatrix}
0 & 1 & 1 \\
0 & 0 & 0 \\
\alpha  & 1 & 1
\end{pmatrix}}$
&
$\tiny
\begin{pmatrix}
0 & \dfrac{p_{22}^2+p_{32}^2}{p_{11}} & \dfrac{p_{33}^2(p_{22}^2+p_{32}^2)}{p_{11}p_{22}^2} \\ \\
\dfrac{\alpha p_{11}^2p_{22}}{p_{22}^2+p_{32}^2} & p_{22} & \dfrac{p_{33}^2}{p_{22}} \\ \\
\dfrac{-\alpha p_{11}^2p_{22}p_{32}}{p_{33}(p_{22}^2+p_{32}^2)}  & \dfrac{-p_{22}p_{32}}{p_{33}} & \dfrac{-p_{32}p_{33}}{p_{22}}
\end{pmatrix}
$
\\
&&&\\[-0.2cm]
\hline
\hline
&&&\\[-0.2cm]

8

&
$\color{ao(english)}{
\begin{pmatrix}
1 & 1 & 1 \\
0 & 0 & 0 \\
\alpha  & 0 & 0
\end{pmatrix}}
$  &

$\color{ao(english)}{
\begin{pmatrix}
1 & 1 & 1 \\
\alpha & 0 & 0 \\
0  & 0 & 0
\end{pmatrix}}$
&
$\tiny
\begin{pmatrix}
p_{11} & \dfrac{p_{22}^2+p_{32}^2}{p_{11}} & \dfrac{p_{33}^2(p_{22}^2+p_{32}^2)}{p_{11}p_{22}^2} \\ \\
\dfrac{\alpha p_{11}^2p_{32}}{p_{22}^2+p_{32}^2} & 0 & 0 \\ \\
\dfrac{\alpha p_{11}^2p_{22}^2}{p_{33}(p_{22}^2+p_{32}^2)}  & 0 & 0
\end{pmatrix}
$
\\
&&&\\[-0.2cm]
\hline
\hline
&&&\\[-0.2cm]
9
&
$\color{ao(english)}{
\begin{pmatrix}
1 & 1 & 1 \\
0 & 0 & 0 \\
0  & \alpha & \alpha
\end{pmatrix}}
$  &

$\color{ao(english)}{
\begin{pmatrix}
1 & 1 & 1 \\
0 & \alpha & \alpha \\
0  & 0 & 0
\end{pmatrix}}$
&
$\tiny
\begin{pmatrix}
p_{11} & \dfrac{p_{22}^2+p_{32}^2}{p_{11}} & \dfrac{p_{33}^2(p_{22}^2+p_{32}^2)}{p_{11}p_{22}^2} \\ \\
0 & \alpha p_{32}& \dfrac{\alpha p_{33}^2p_{32}}{p_{22}^2}  \\ \\
0 & \dfrac{\alpha p_{22}^2}{p_{33}}  & \alpha p_{33}
\end{pmatrix}
$
 \\
&&&\\
\hline
\end{tabular}}

\end{center}
\newpage

\begin{center}

\scalebox{0.7}{
\begin{tabular}{|c||c||c||c|}
\hline
&&&\\
Type & &(2,3)& Q \\
&&&\\[-0.2cm]
\hline
\hline
&&&\\[-0.2cm]
10
&
$\color{ao(english)}{
\begin{pmatrix}
1 & 0  & 0 \\
\alpha & 0 & 0 \\
0  & 1 & 1
\end{pmatrix}}
$  &
$\color{ao(english)}{
\begin{pmatrix}
1 & 0  & 0 \\
0 & 1 & 1 \\
\alpha  & 0 & 0
\end{pmatrix}}
$  &
$\tiny
\begin{pmatrix}
p_{11} & 0 & 0 \\
\dfrac{\alpha p_{11}^2p_{22}}{p_{22}^2+p_{32}^2} & p_{32} &  \dfrac{p_{32}p_{33}^2}{p_{22}^2} \\ \\
\dfrac{-\alpha p_{11}^2p_{22}p_{32}}{p_{33}(p_{22}^2+p_{32}^2)}& \dfrac{p_{22}^2}{p_{33}} & p_{33}  \\
\end{pmatrix}
$
\\
&&&\\[-0.2cm]
\hline
\hline
&&&\\[-0.2cm]
11
&
$\color{ao(english)}{
\begin{pmatrix}
0 & 1 & 1 \\
1 & 0 & 0 \\
\alpha  & 0 & 0
\end{pmatrix}}
$  &
$ \begin{pmatrix}
0 & 1 & 1 \\
\alpha & 0 & 0 \\
1  & 0 & 0
\end{pmatrix}$
&
$\tiny
\begin{pmatrix}
0 & \dfrac{p_{22}^2+p_{32}^2}{p_{11}} & \dfrac{p_{33}^2(p_{22}^2+p_{32}^2)}{p_{11}p_{22}^2} \\ \\
\dfrac{p_{11}^2(p_{22}+\alpha p_{32})}{p_{22}^2+p_{32}^2} & 0 & 0 \\ \\
\dfrac{p_{11}^2p_{22}(\alpha p_{22}-p_{32})}{p_{33}(p_{22}^2+p_{32}^2)}  & 0 & 0
\end{pmatrix} $
\\
&&&\\[-0.2cm]
\hline
\hline
&&&\\[-0.2cm]
12
&
$\color{ao(english)}{
\begin{pmatrix}
0 & 0 & 0 \\
1 & 1 & 1 \\
0  & \alpha & \alpha
\end{pmatrix}}
$  &
$\color{ao(english)}{
\begin{pmatrix}
0 & 0 & 0 \\
0 & \alpha & \alpha \\
1  & 1 & 1
\end{pmatrix}}
$
&
$\tiny
\begin{pmatrix}
0 & 0 & 0 \\
\dfrac{p_{11}^2p_{22}}{p_{22}^2+p_{32}^2} & p_{22}+\alpha p_{32} & \dfrac{p_{33}^2(p_{22}+\alpha p_{32})}{p_{22}^2} \\ \\
\dfrac{-p_{11}^2p_{22}p_{32}}{p_{33}(p_{22}^2+p_{32}^2)} &\dfrac{p_{22}(\alpha p_{22}-p_{32})}{p_{33}} & \dfrac{p_{33}(\alpha p_{22}-p_{32})}{p_{22}}
\end{pmatrix} $
\\
&&&\\[-0.2cm]
\hline
\hline
&&&\\[-0.2cm]
13
&
$\color{ao(english)}{
\begin{pmatrix}
0 & 0 & 0 \\
1 & 1 & 1 \\
\alpha  & 0 & 0
\end{pmatrix}}
$
&
$\color{ao(english)}{
\begin{pmatrix}
0 & 0 & 0 \\
\alpha & 0 & 0 \\
1  & 1 & 1
\end{pmatrix}}$
&
$\tiny
\begin{pmatrix}
0 & 0 & 0 \\
\dfrac{p_{11}^2(p_{22}+\alpha p_{32})}{p_{22}^2+p_{32}^2} & p_{22} & \dfrac{p_{33}^2}{p_{22}}  \\ \\
\dfrac{p_{11}^2p_{22}(\alpha p_{22}-p_{32})}{p_{33}(p_{22}^2+p_{32}^2)}  &\dfrac{-p_{22}p_{32}}{p_{33}} & \dfrac{-p_{32}p_{33}}{p_{22}}
\end{pmatrix} $
 \\
&&&\\
\hline
\end{tabular}}

{\footnotesize {FIGURE 2. ${\rm dim}(A^2)=2$; $\alpha \neq 0$; ${\rm dim}({\rm ann}(A))=0$;}}\label{tabla9}\\
\footnotesize{$A$ has not Property (2LI); three non-zero entries in the first and second columns.}
\end{center}

\noindent
{\bf Case 2.3} $M_B$ has four non-zero entries in the first and second columns.
\medskip

\noindent
There exists $\binom{6}{3} = 15$ possible places where to write four zeros. The non-zero parameters $\alpha$, $\beta$ satisfy that $\alpha \neq \beta $ in the matrices appearing as types 14 and 20. This is because the rank of those matrices has to be two. There are nine different types. They are listed below.
\newpage

\begin{center}

\scalebox{0.7}{

\begin{tabular}{|c||c||c||c|}

\hline
&&&\\
Type & &(2,3)&Q\\
&&&\\[-0.2cm]
\hline
\hline
&&&\\[-0.2cm]
14
&
$\color{ao(english)}{
\begin{pmatrix}
\alpha & 1 & 1 \\
\beta & 1 & 1 \\
0  & 0 & 0
\end{pmatrix}}
$  &

$\color{ao(english)}{
\begin{pmatrix}
\alpha & 1 & 1 \\
0 & 0 & 0 \\
\beta  & 1 & 1
\end{pmatrix}}$
&
$\tiny
\begin{pmatrix}
\alpha p_{11} & \dfrac{p_{22}^2+p_{32}^2}{p_{11}} & \dfrac{p_{33}^2(p_{22}^2+p_{32}^2)}{p_{11}p_{22}^2} \\ \\
\dfrac{ \beta p_{11}^2p_{22}}{p_{22}^2+p_{32}^2} & p_{22} & \dfrac{ p_{33}^2}{p_{22}} \\ \\
\dfrac{-\beta p_{11}^2p_{22}p_{32}}{p_{33}(p_{22}^2+p_{32}^2)}  & \dfrac{-p_{22}p_{32}}{p_{33}} & \dfrac{-p_{32}p_{33}}{p_{22}}
\end{pmatrix}$
 \\
&&&\\[-0.2cm]
\hline
\hline
&&&\\[-0.2cm]
15

&
$\color{ao(english)}{
\begin{pmatrix}
\alpha & 1 & 1 \\
0 & 1 & 1 \\
\beta  & 0 & 0
\end{pmatrix}}
$  &

$\color{ao(english)}{
\begin{pmatrix}
\alpha & 1 & 1 \\
\beta & 0 & 0 \\
0  & 1 & 1
\end{pmatrix}}$
&
$\tiny
\begin{pmatrix}
\alpha p_{11} & \dfrac{p_{22}^2+p_{32}^2}{p_{11}} & \dfrac{p_{33}^2(p_{22}^2+p_{32}^2)}{p_{11}p_{22}^2} \\ \\
\dfrac{\beta p_{11}^2p_{32}}{p_{22}^2+p_{32}^2} &  p_{22} & \dfrac{p_{33}^2}{p_{22}} \\ \\
\dfrac{\beta p_{11}^2p_{22}^2}{p_{33}(p_{22}^2+p_{32}^2)}  & \dfrac{- p_{22}p_{32}}{p_{33}} & \dfrac{-p_{32}p_{33}}{p_{22}}
\end{pmatrix}
$
 \\
&&&\\[-0.2cm]
\hline
\hline
&&&\\[-0.2cm]
16
&
$\color{ao(english)}{
\begin{pmatrix}
1 & 0 & 0 \\
\alpha & 1 & 1 \\
\beta  & 0 & 0
\end{pmatrix}}
$  &

$\color{ao(english)}{
\begin{pmatrix}
1 & 0 & 0 \\
\beta & 0 & 0 \\
\alpha  & 1 & 1
\end{pmatrix}}$
&
$\tiny
\begin{pmatrix}
p_{11} & 0 & 0 \\
\dfrac{p_{11}^2(\alpha p_{22}+\beta p_{32})}{p_{22}^2+p_{32}^2} & p_{22} & \dfrac{p_{33}^2}{p_{22}} \\ \\
\dfrac{p_{11}^2p_{22}(\beta p_{22}-\alpha p_{32})}{p_{33}(p_{22}^2+p_{32}^2)}  & \dfrac{-p_{22}p_{32}}{p_{33}} & \dfrac{-p_{32}p_{33}}{p_{22}}
\end{pmatrix}$
 \\
&&&\\[-0.2cm]
\hline
\hline
&&&\\[-0.2cm]
17
&
$\color{ao(english)}{
\begin{pmatrix}
1 & 0 & 0 \\
0 & 1 & 1 \\
\alpha  &  \beta & \beta
\end{pmatrix}}
$  &

$\color{ao(english)}{
\begin{pmatrix}
1 & 0 & 0 \\
\alpha & \beta & \beta \\
0  & 1 & 1
\end{pmatrix}}$
&
$\tiny
\begin{pmatrix}
p_{11} & 0 & 0 \\
\dfrac{\alpha p_{11}^2p_{32}}{p_{22}^2+p_{32}^2} & p_{22}+\beta p_{32} & \dfrac{p_{33}^2(p_{22}+\beta p_{32})}{p_{22}^2} \\ \\
\dfrac{\alpha p_{11}^2p_{22}^2}{p_{33}(p_{22}^2+p_{32}^2)}  & \dfrac{p_{22}(\beta p_{22}-p_{32})}{p_{33}} & \dfrac{p_{33}(\beta p_{22}-p_{32})}{p_{22}}
\end{pmatrix}$
 \\
&&&\\[-0.2cm]
\hline
\hline
&&&\\[-0.2cm]
18
&

$\color{ao(english)}{
\begin{pmatrix}
\alpha & 1 & 1  \\
0 & 1 & 1 \\
0  & \beta & \beta
\end{pmatrix}}
$  &
${
\begin{pmatrix}
\alpha & 1 & 1 \\
0 & \beta & \beta \\
0  & 1 & 1
\end{pmatrix}}$
&
$\tiny
\begin{pmatrix}
\alpha p_{11} & \dfrac{p_{22}^2+p_{32}^2}{p_{11}} & \dfrac{p_{33}^2(p_{22}^2+p_{32}^2)}{p_{11}p_{22}^2} \\
0 & p_{22}+\beta p_{32} &  \dfrac{p_{33}^2(p_{22}+\beta p_{32})}{p_{22}^2} \\ \\
0 & \dfrac{p_{22}(\beta p_{22}- p_{32})}{p_{33}}  & \dfrac{p_{33}(\beta p_{22}-p_{32})}{p_{22}}
\end{pmatrix}
$
\\
&&&\\[-0.2cm]
\hline
\hline
&&&\\[-0.2cm]
19
&
$\color{ao(english)}{
\begin{pmatrix}
0 & 1 & 1 \\
\alpha & 0 & 0 \\
\beta  & 1 & 1
\end{pmatrix}}
$  &

$\color{ao(english)}{
\begin{pmatrix}
0 & 1 & 1 \\
\beta & 1 & 1 \\
\alpha  & 0 & 0
\end{pmatrix}}$
&
$\tiny
\begin{pmatrix}
0 & \dfrac{p_{22}^2+p_{32}^2}{p_{11}} & \dfrac{p_{33}^2(p_{22}^2+p_{32}^2)}{p_{11}p_{22}^2} \\
\dfrac{p_{11}^2(\alpha p_{22}+\beta p_{32})}{p_{22}^2+p_{32}^2} &  p_{32}  & \dfrac{ p_{32}p_{33}^2}{p_{22}^2}   \\ \\
\dfrac{p_{11}^2p_{22}(\beta p_{22}-\alpha p_{32})}{p_{33}(p_{22}^2+p_{32}^2)}  & \dfrac{p_{22}^2}{p_{33}} &  p_{33}
\end{pmatrix}
$
 \\
&&&\\
\hline
\end{tabular}}

\end{center}
\bigskip

\begin{center}

\scalebox{0.7}{

\begin{tabular}{|c||c||c||c|}

\hline
&&&\\
Type & &(2,3)&Q\\
&&&\\[-0.2cm]
\hline
\hline
&&&\\[-0.2cm]

20
&
$\color{ao(english)}{
\begin{pmatrix}
0 & 0 & 0 \\
1 & 1 & 1 \\
\alpha  & \beta & \beta
\end{pmatrix}}
$  &

${
\begin{pmatrix}
0 & 0 & 0 \\
\alpha & \beta & \beta \\
1  & 1 & 1
\end{pmatrix}}$
&
$\tiny
\begin{pmatrix}
0 & 0 & 0 \\
\dfrac{p_{11}^2(p_{22}+\alpha p_{32})}{p_{22}^2+p_{32}^2} & p_{22}+\beta p_{32} & \dfrac{p_{33}^2(p_{22}+\beta p_{32})}{p_{22}^2}\\ \\
\dfrac{p_{11}^2p_{22}(\alpha p_{22}-p_{32})}{p_{33}(p_{22}^2+p_{32}^2)} & \dfrac{p_{22}(\beta p_{22}-p_{32})}{p_{33}}  & \dfrac{p_{33}(\beta p_{22}-p_{32})}{p_{22}}   \\
\end{pmatrix}
$
 \\
&&&\\[-0.2cm]
\hline
\hline
&&&\\[-0.2cm]
21
&
$\color{ao(english)}{
\begin{pmatrix}
1 & 1 & 1 \\
\alpha & 0 & 0 \\
\beta  & 0 & 0
\end{pmatrix}}
$  &
${
\begin{pmatrix}
1 & 1 & 1 \\
\beta & 0 & 0 \\
\alpha  & 0 & 0
\end{pmatrix}}$
&
$\tiny
\begin{pmatrix}
p_{11} & \dfrac{p_{22}^2+p_{32}^2}{p_{11}} & \dfrac{p_{33}^2(p_{22}^2+p_{32}^2)}{p_{11}p_{22}^2}\\ \\
\dfrac{p_{11}^2(\alpha p_{22}+\beta p_{32})}{p_{22}^2+p_{32}^2} & 0 & 0 \\ \\
\dfrac{p_{11}^2p_{22}(\beta p_{22}-\alpha p_{32})}{p_{33}(p_{22}^2+p_{32}^2)} & 0 & 0   \\
\end{pmatrix}
$
 \\
&&&\\[-0.2cm]
\hline
\hline
&&&\\[-0.2cm]
22
&
$\color{ao(english)}{
\begin{pmatrix}
0 & 1 & 1 \\
0 & 1 & 1 \\
\alpha  & \beta & \beta
\end{pmatrix}}
$  &

$\color{ao(english)}{
\begin{pmatrix}
0 & 1 & 1 \\
\alpha & \beta & \beta \\
0  & 1 & 1
\end{pmatrix}}$
&
$\tiny
\begin{pmatrix}
0 & \dfrac{p_{22}^2+p_{32}^2}{p_{11}} & \dfrac{p_{33}^2(p_{22}^2+p_{32}^2)}{p_{11}p_{22}^2} \\ \\
\dfrac{\alpha p_{11}^2p_{32}}{p_{22}^2+p_{32}^2} & p_{22}+\beta p_{32} & \dfrac{p_{33}^2(p_{22}+\beta p_{32})}{p_{22}^2}\\ \\
\dfrac{\alpha p_{11}^2p_{22}^2}{p_{33}(p_{22}^2+p_{32}^2)} & \dfrac{p_{22}(\beta p_{22}-p_{32})}{p_{33}}  & \dfrac{p_{33}(\beta p_{22}-p_{32})}{p_{22}}   \\
\end{pmatrix}
$
 \\
&&&\\
\hline
\end{tabular}}

{\footnotesize {FIGURE 3. ${\rm dim}(A^2)=2$; $\alpha, \beta \neq 0$; ${\rm dim}({\rm ann}(A))=0$;}}\label{tabla11}\\
\footnotesize{$A$ has not Property (2LI); four non-zero entries in the first and second columns.}
\end{center}
\bigskip

 \noindent
 {\bf Case 2.4} $M_B$ has five non-zero entries in the first and second columns.
\medskip

\noindent
There are only 6 possibilities: those for which we place only one zero in one place of the first column or of the second column. There are four types which are listed below.
\bigskip

\begin{center}

\scalebox{0.7}{

\begin{tabular}{|c||c||c||c|}

\hline
&&&\\
Type & &(2,3)& Q\\
&&&\\[-0.2cm]
\hline
\hline
&&&\\[-0.2cm]
23
&
$\color{ao(english)}{
\begin{pmatrix}
1 & 0 & 0 \\
\alpha & 1 & 1 \\
\beta  & \gamma & \gamma
\end{pmatrix}}
$  &

$ \begin{pmatrix}
1 & 0 & 0 \\
\beta & \gamma & \gamma \\
\alpha  & 1 & 1
\end{pmatrix}$
&
$\tiny
\begin{pmatrix}
p_{11} & 0 & 0 \\
\dfrac{p_{11}^2(\alpha p_{22}+\beta p_{32})}{p_{22}^2+p_{32}^2} & p_{22}+\gamma p_{32} & \dfrac{p_{33}^2(p_{22}+\gamma p_{32})}{p_{22}^2}  \\ \\
\dfrac{p_{11}^2p_{22}(\beta p_{22}-\alpha p_{32})}{p_{33}(p_{22}^2+p_{32}^2)} & \dfrac{p_{22}(\gamma p_{22}-p_{32})}{p_{33}} & \dfrac{p_{33}(\gamma p_{22}-p_{32})}{p_{22}}
\end{pmatrix}
$ \\
&&&\\[-0.2cm]
\hline
\hline
&&&\\[-0.2cm]
24
&
$\color{ao(english)}{
\begin{pmatrix}
\alpha & 1 & 1 \\
\beta & 1  & 1 \\
0 & \gamma & \gamma
\end{pmatrix}}
$  &

$\color{ao(english)}{
\begin{pmatrix}
\alpha & 1 & 1 \\
0 & \gamma & \gamma \\
\beta & 1 & 1
\end{pmatrix}}
$
&
$\tiny
\begin{pmatrix}
\alpha p_{11} & \dfrac{p_{22}^2+ p_{32}^2}{p_{11}} & \dfrac{p_{33}^2(p_{22}^2+p_{32}^2)}{p_{11}p_{22}^2} \\
\dfrac{\beta p_{22}p_{11}^2}{p_{22}^2+p_{32}^2} & \gamma p_{32}+ p_{22} & \dfrac{p_{33}^2( p_{22}+\gamma p_{32})}{p_{22}^2}  \\ \\
\dfrac{-\beta p_{11}^2p_{22}p_{32}}{p_{33}(p_{22}^2+p_{32}^2)} & \dfrac{p_{22}(\gamma p_{22}-p_{32})}{p_{33}} & \dfrac{p_{33}(\gamma p_{22}- p_{32})}{p_{22}}
\end{pmatrix}$
\\
&&&\\[-0.2cm]
\hline
\hline
&&&\\[-0.2cm]
25
&
$\color{ao(english)}{
\begin{pmatrix}
0 & 1 & 1 \\
\alpha & 1 & 1 \\
\beta  & \gamma & \gamma
\end{pmatrix}}
$  &

$ \begin{pmatrix}
0 & 1 & 1 \\
\beta & \gamma & \gamma \\
\alpha  & 1 & 1
\end{pmatrix}$
&
$\tiny
\begin{pmatrix}
0 & \dfrac{p_{22}^2+ p_{32}^2}{p_{11}} & \dfrac{p_{33}^2(p_{22}^2+p_{32}^2)}{p_{11}p_{22}^2} \\
\dfrac{p_{11}^2(\alpha p_{22}+\beta p_{32})}{p_{22}^2+p_{32}^2} & p_{22}+\gamma p_{32} & \dfrac{p_{33}^2(p_{22}+\gamma p_{32})}{p_{22}^2}  \\ \\
\dfrac{-p_{11}^2p_{22}(\beta p_{22}-\alpha p_{32})}{p_{33}(p_{22}^2+p_{32}^2)} & \dfrac{p_{22}(\gamma p_{22}-\beta p_{32})}{p_{33}} & \dfrac{p_{33}(\gamma p_{22}-\beta p_{32})}{p_{22}}
\end{pmatrix} $
 \\
&&&\\[-0.2cm]
\hline
\hline
&&&\\[-0.2cm]
26
&
$\color{ao(english)}{
\begin{pmatrix}
\alpha & 1 & 1 \\
\beta & 0 & 0 \\
\gamma  & 1 & 1
\end{pmatrix}}
$  &
$\color{ao(english)}{
\begin{pmatrix}
\alpha & 1 & 1 \\
\gamma & 1 & 1 \\
\beta  & 0 & 0
\end{pmatrix}}$
&
$\tiny
\begin{pmatrix}
\alpha p_{11} & \dfrac{p_{22}^2+ p_{32}^2}{p_{11}} & \dfrac{p_{33}^2(p_{22}^2+p_{32}^2)}{p_{11}p_{22}^2} \\
\dfrac{p_{11}^2(\beta p_{22}+\gamma p_{32})}{p_{22}^2+p_{32}^2} &  p_{32} & \dfrac{ p_{32}p_{33}^2}{p_{22}^2}  \\ \\
\dfrac{p_{11}^2p_{22}(\gamma p_{22}-\beta p_{32})}{p_{33}(p_{22}^2+p_{32}^2)} & \dfrac{ p_{22}^2}{p_{33}} &  p_{33}
\end{pmatrix}$
 \\
&&&\\
\hline
\end{tabular}}

{\footnotesize {FIGURE 4. ${\rm dim}(A^2)=2$; $\alpha, \beta, \gamma \neq 0$; ${\rm dim}({\rm ann}(A))=0$;}}\label{tabla12}\\
\footnotesize{$A$ has not Property (2LI); five non-zero entries in the first and second columns.}
\end{center}
\bigskip

\noindent
  {\bf Case 2.5} $M_B$ has six non-zero entries in the first and second columns.
\medskip

\noindent
The condition that the entries of the matrix must satisfy is one of the following: $\alpha \neq \beta$, or $\lambda \neq \gamma$ or $\alpha\lambda \neq \beta \gamma$. There is only one possibility.
\bigskip

\begin{center}

\scalebox{0.7}{

\begin{tabular}{|c||c||c||c|}

\hline
&&&\\
Type & &(2,3)& Q\\
&&&\\[-0.2cm]
\hline
\hline
&&&\\[-0.2cm]
27
&
$\color{ao(english)}{
\begin{pmatrix}
\alpha & 1 & 1  \\
\beta & 1 & 1 \\
\gamma  & \lambda & \lambda
\end{pmatrix}}
$  &
${
\begin{pmatrix}
\alpha & 1 & 1 \\
\gamma & \lambda & \lambda \\
\beta  & 1 & 1
\end{pmatrix}}$
&
$\tiny
\begin{pmatrix}
\alpha p_{11} & \dfrac{p_{22}^2+ p_{32}^2}{p_{11}} & \dfrac{p_{33}^2(p_{22}^2+p_{32}^2)}{p_{11}p_{22}^2} \\
\dfrac{p_{11}^2(\gamma p_{32}-\beta p_{22})}{p_{22}^2+p_{32}^2} & \lambda p_{32}+ p_{22} & \dfrac{p_{33}^2(\lambda p_{32}+p_{22})}{p_{22}^2}  \\ \\
\dfrac{p_{11}^2p_{22}(\gamma p_{22}-\beta p_{32})}{p_{33}(p_{22}^2+p_{32}^2)} & \dfrac{p_{22}(\lambda p_{22}-p_{32})}{p_{33}} & \dfrac{p_{33}(\lambda p_{22}-p_{32})}{p_{22}}
\end{pmatrix}$
\\
&&&\\
\hline
\end{tabular}}

{\footnotesize {FIGURE 5. ${\rm dim}(A^2)=2$; $\alpha, \beta, \gamma, \lambda \neq 0$; ${\rm dim}({\rm ann}(A))=0$;}}\label{tabla13}\\
\footnotesize{$A$ has not Property (2LI); six non-zero entries in the first and second columns.}
\end{center}

These tables give us a first classification, that can be redundant in some cases. Now we  study if algebras having different types are isomorphic or not. The last step will be to study if algebras in the same type are isomorphic.

\begin{enumerate}[\rm $\circ$]
\item The evolution algebra given in Type 1 is included in the parametric family of algebras of Type 4.
\item The evolution algebra given inType 2 is included in the parametric family of  algebras of Type 11.
\item The evolution algebra given in Types 3, 12 and 13 are included in the parametric family of algebras of Type 20.
\item The parametric families of evolution algebras given in Types 5, 10, 16  and 17 are included in the parametric family of  algebras of Type 23.
\item The parametric families of evolution algebras given in Types 6, 7, 19 and 22 are included in the parametric family of  algebras of Type 25.
\item The parametric family of evolution algebras given in Type 8 is included in the parametric family of  algebras of Type 21.
\item The parametric family of evolution algebras given in Type 9 is included in the parametric family of  algebras of Type 18.
\item The parametric families of evolution algebras given in Types 14, 15, 24 and 26 are included in the parametric family of  algebras of Type 27.
\end{enumerate}
 \noindent

Therefore, there are eight subtypes of parametric families of evolution algebras, which are listed below.
\medskip
\begin{center}
{\rm$S$}\
\scalebox{0.78}
{
$
\begin{pmatrix}
1 & 0 &  0 \\
0 & 1 & 1  \\
0 & \alpha & \alpha
\end{pmatrix},
$
$
\begin{pmatrix}
0 & 1 & 1 \\
1 & 0 & 0 \\
\alpha & 0 & 0
\end{pmatrix},
$
$
\begin{pmatrix}
\alpha & 1 &  1 \\
0 & 1 & 1  \\
0 & \beta & \beta
\end{pmatrix},
$
$
\begin{pmatrix}
0 & 0 & 0 \\
1 & 1 & 1  \\
\alpha & \beta & \beta
\end{pmatrix},
$
$
\begin{pmatrix}
1 & 1 & 1 \\
\alpha & 0 & 0 \\
\beta & 0 & 0
\end{pmatrix},
$
$
\begin{pmatrix}
1 & 0 & 0 \\
\alpha & 1 & 1 \\
\beta & \gamma & \gamma
\end{pmatrix},
$
$\begin{pmatrix}
0 & 1 & 1 \\
\alpha & 1 & 1 \\
\beta & \gamma & \gamma
\end{pmatrix},
$
$
\begin{pmatrix}
\alpha & 1 & 1 \\
\beta & 1 & 1 \\
\gamma & \lambda & \lambda
\end{pmatrix}.
$
}
\end{center}
\medskip

\begin{remark}\label{CerosInvariantesEnS}
\rm
Note that these matrices are precisely those appearing in the Figures for which the change of basis matrices of type Q leaves invariant the number of non-zero entries and its place in the structure matrix. This does not mean that the number of non-zero entries is preserved (see, for example, in Figure 2, that the first matrix of Type 5 has four non-zero entries while the third matrix in the same line has seven).
\end{remark}
Now, we will analyze when the resulting parametric families of evolution algebras are mutually isomorphic. In some cases, we will reduce the number of parameters and some of these parametric families will be isomorphic to one of the known evolution algebras.

\noindent
Every evolution algebra with structure matrix $\begin{pmatrix}
1 & 0 &  0 \\
0 & 1 & 1  \\
0 & \alpha & \alpha
\end{pmatrix}$   satifying $\alpha ^2 + 1 \neq 0$ is isomorphic to the evolution algebra given by the structure matrix $\begin{pmatrix}
1 & 0 &  0 \\
0 & 1 & 1  \\
0 & 1 & 1
\end{pmatrix}$. Indeed, if $\alpha \neq -1$, we take the change of basis matrix
$$\tiny
\begin{pmatrix}
1 & 0 &  0 \\
0 & \dfrac{1+\alpha}{1+\alpha^2} & \dfrac{1-\alpha}{1+\alpha^2}  \\ \\
0 & \dfrac{-1+\alpha}{1+\alpha^2} & \dfrac{1+\alpha}{1+\alpha^2}
\end{pmatrix}.$$

In case of $\alpha = -1$, we assume the change of basis matrix

\begin{eqnarray}\label{matrizigual}
\begin{pmatrix}
1 & 0 &  0  \\
0 & 1 & 0 \\
0 & 0 & -1
\end{pmatrix}.
\end{eqnarray}

\noindent
The evolution algebra with structure matrix
$\begin{pmatrix}
0 & 1 &  1 \\
1 & 0 & 0  \\
\alpha & 0 & 0
\end{pmatrix}$  satisfying $\alpha^2 + 1 \neq 0$ is isomorphic to the evolution algebra given by the structure matrix
$\begin{pmatrix}
0 & 1 & 1 \\
1 & 0 & 0  \\
1 & 0 & 0
\end{pmatrix}$. Indeed, if $\alpha \neq 1, -1$, we take the change of basis matrix
$$\tiny
\begin{pmatrix}
 \sqrt[3]{\frac{2}{1+\alpha^2}} & 0 &  0 \\ \\
0 & \dfrac{1-\alpha}{\sqrt[3]{2(1+\alpha^2)^2}}& \dfrac{1+\alpha}{\sqrt[3]{2(1+\alpha^2)^2}}\\ \\
0 & \dfrac{1+\alpha}{\sqrt[3]{2(1+\alpha^2)^2}} & \dfrac{\alpha-1}{\sqrt[3]{2(1+\alpha^2)^2}}
\end{pmatrix}.$$

If $\alpha = -1$, we consider again the change of basis matrix given in \eqref{matrizigual}.

\noindent
Every evolution algebra  with structure matrix $\begin{pmatrix}
\alpha & 1 &  1 \\
0 & 1 & 1  \\
0 & \beta & \beta
\end{pmatrix}$  satisfying $\beta^2 + 1 \neq 0$ is isomorphic to the evolution algebra given by the structure matrix
$\begin{pmatrix}
\alpha' & 1 &  1 \\
0 & 1 & 1  \\
0 & 1 & 1
\end{pmatrix}$ for some $\alpha'\in \K$. Indeed, if $\beta \neq -1$, we take the change of basis matrix

$$\tiny
\begin{pmatrix}
\dfrac{2}{1+\beta^2} & 0 &  0 \\
0 & \dfrac{1-\beta}{1+\beta^2} & \dfrac{1+\beta}{1+\beta^2}  \\ \\
0 & \dfrac{1+\beta}{1+\beta^2} & \dfrac{\beta -1}{1+\beta^2}
\end{pmatrix}.$$

In case of $\beta = -1$, we can also consider the change of basis matrix given in \eqref{matrizigual}.

\noindent
The evolution algebra with structure matrix
$\begin{pmatrix}
0 & 0 &  0 \\
1 & 1 & 1  \\
\alpha & \beta & \beta
\end{pmatrix}$  satisfying $\beta^2 + 1 \neq 0$ is isomorphic to the evolution algebra  given by the structure matrix $\begin{pmatrix}
0 & 0 &  0 \\
1 & 1 & 1  \\
\alpha' & 1 & 1
\end{pmatrix}$ for some ${\alpha'}\in \K$. Indeed, if $\beta \neq -1$, we take the change of basis matrix

$$\tiny
\begin{pmatrix}
\dfrac{\sqrt{2}}{\sqrt{1+\alpha+\beta(-1+\alpha)}} & 0 &  0 \\
0 & \dfrac{1-\beta}{1+\beta^2} & \dfrac{1+\beta}{1+\beta^2}  \\ \\
0 & \dfrac{1+\beta}{1+\beta^2} & \dfrac{\beta -1}{1+\beta^2}
\end{pmatrix}.$$

In case of $\beta = -1$, we take again the change of basis matrix  given in \eqref{matrizigual}.

\noindent
Every evolution algebra with structure matrix
$\begin{pmatrix}
1 & 1 & 1 \\
\alpha & 0 & 0  \\
\beta & 0 & 0
\end{pmatrix}$  satisfying $\alpha^2 + \beta^2  \neq 0$ and $ \beta ^2 \neq 1$ is isomorphic to the evolution algebra having  structure matrix $\begin{pmatrix}
1 & 1 &  1 \\
1 & 0 & 0  \\
\beta' & 0 & 0
\end{pmatrix}$, for some $\beta'\in\K$. Indeed, if $\alpha \neq -1$, we take the change of basis matrix
$$\tiny
\begin{pmatrix}
1 & 0 &  0 \\
0 &  \dfrac{\alpha-\beta {\rm s}}{\alpha ^2+\beta ^2} & \dfrac{-(\beta+\alpha {\rm s})}{\alpha^2+\beta^2} \\ \\
0 &  \dfrac{\beta+\alpha {\rm s}}{\alpha^2+\beta^2} & \dfrac{\alpha-\beta {\rm s}}{\alpha ^2+\beta ^2}
\end{pmatrix},$$

\noindent
where s=$\sqrt{-1+\alpha^2+\beta^2}$.

\noindent
For $\alpha = -1$, consider the change of basis matrix:
$\begin{pmatrix}
1 & 0 &  0 \\
0 & -1 & 0  \\
0 & 0 & 1
\end{pmatrix}.$

On the other hand, every evolution algebra with structure matrix $\begin{pmatrix}
1 & 1 &  1 \\
\alpha & 0 & 0  \\
1 & 0 & 0
\end{pmatrix}$ $(\beta =1)$ and $\begin{pmatrix}
1 & 1 &  1 \\
\alpha & 0 & 0  \\
-1 & 0 & 0
\end{pmatrix}$ $(\beta =-1)$ is isomorphic to the evolution algebra with structure matrix
$\begin{pmatrix}
1 & 1 &  1 \\
1 & 0 & 0  \\
\beta & 0 & 0
\end{pmatrix}$. Indeed, take the new natural bases $\{e_1,e_3,e_2\}$ and $\{e_1,-e_3,e_2\}$, respectively.

\noindent
The evolution algebra with structure matrix
$\begin{pmatrix}
1 & 0 &  0 \\
\alpha & 1 & 1  \\
\beta & \gamma & \gamma
\end{pmatrix}$  with $\gamma^2 + 1 \neq 0$ is isomorphic to the evolution algebra given by the structure matrix $\begin{pmatrix}
1 & 0 &  0 \\
\alpha' & 1 & 1  \\
\beta' & 1 & 1
\end{pmatrix}$ for certain $\alpha', \beta' \in \K$. Indeed, if $\gamma \neq -1$, we take the change of basis matrix
$$\tiny
\begin{pmatrix}
1 & 0 &  0 \\
0 & \dfrac{1+\gamma}{1+\gamma^2} & \dfrac{1-\gamma}{1+\gamma^2}  \\ \\
0 & \dfrac{\gamma -1}{1+\gamma^2} & \dfrac{1+\gamma}{1+\gamma^2}
\end{pmatrix}.$$

If $\gamma = -1$, take again the change of basis matrix \eqref{matrizigual}.

\noindent
The evolution algebra with structure matrix
$\begin{pmatrix}
0 & 1 & 1 \\
\alpha & 1 & 1  \\
\beta & \gamma & \gamma
\end{pmatrix}$  with $\gamma^2 + 1 \neq 0$ is isomorphic to the  evolution algebra given by the structure matrix $\begin{pmatrix}
0 & 1 &  1 \\
\alpha' & 1 & 1  \\
\beta' & 1 & 1
\end{pmatrix}$ for certain $\alpha', \beta' \in \K$. Indeed, if $\gamma \neq -1$, we take the change of basis matrix
$$\tiny
\begin{pmatrix}
\dfrac{2}{1+\gamma^2} & 0 &  0 \\
0 & \dfrac{1+\gamma}{1+\gamma^2} & \dfrac{1-\gamma}{1+\gamma^2}  \\ \\
0 & \dfrac{\gamma -1}{1+\gamma^2} & \dfrac{1+\gamma}{1+\gamma^2}
\end{pmatrix}.$$

If $\gamma = -1$, also we take the change of basis matrix given in \eqref{matrizigual}.

\noindent
The  evolution algebra with structure matrix
$\begin{pmatrix}
\alpha & 1 & 1 \\
\beta & 1 & 1  \\
\gamma & \lambda & \lambda
\end{pmatrix}$  with $\lambda^2 + 1 \neq 0$ is isomorphic to the evolution algebra given by the structure matrix $\begin{pmatrix}
\alpha' & 1 &  1 \\
\beta' & 1 & 1  \\
\gamma' & 1 & 1
\end{pmatrix}$ for certain $\alpha', \beta' , \gamma' \in \K$. Indeed, if $\lambda \neq -1$, we take the change of basis matrix:

$$\tiny
\begin{pmatrix}
\dfrac{2}{1+\lambda^2} & 0 &  0 \\
0 & \dfrac{1+\lambda}{1+\lambda^2} & \dfrac{1-\lambda}{1+\lambda^2}  \\ \\
0 & \dfrac{\lambda -1}{1+\lambda^2} & \dfrac{1+\lambda}{1+\lambda^2}
\end{pmatrix}.$$

If $\gamma = -1$, we consider again the change of basis matrix determined in \eqref{matrizigual}.

Sumarizing,  whenever $e_3^2=e_2^2$ we obtain the following mutually non-isomorphic families of evolution algebras:

\medskip

\begin{center}

\scalebox{0.7}{

\begin{tabular}{|c||c||c|}

\hline
&&\\[-0.2cm]

$
\begin{pmatrix}
0 & 1 & 1 \\
1 & 0 & 0 \\
1 & 0 & 0
\end{pmatrix}$
&
$\begin{pmatrix}
0 & 1 & 1 \\
1 & 0 & 0 \\
\sqrt{-1} & 0 & 0
\end{pmatrix}$

  &
$
\begin{pmatrix}
0 & 1 & 1 \\
1 & 0 & 0 \\
-\sqrt{-1} & 0 & 0
\end{pmatrix}$

\\

&&\\
\hline

\end{tabular}}

{\footnotesize {TABLE 8.  ${\rm dim}(A^2)=2$; ${\rm dim}({\rm ann}(A))=0$;  }}\\
{\footnotesize {$A$ has not Property (2LI); four non-zero entries of the matrices in $S$}}
\end{center}

\medskip

\begin{center}

\scalebox{0.7}{

\begin{tabular}{|c||c||c|}

\hline
&&\\[-0.2cm]
$
\begin{pmatrix}
1 & 0 &  0 \\
0 & 1 & 1  \\
0 & 1 & 1
\end{pmatrix}$

  &
$\begin{pmatrix}
1 & 0 &  0 \\
0 & 1 & 1  \\
0 & \sqrt{-1} & \sqrt{-1}
\end{pmatrix}$

&
$
\begin{pmatrix}
1 & 0 &  0 \\
0 & 1 & 1  \\
0 & -\sqrt{-1} & -\sqrt{-1}
\end{pmatrix}$

\\
&&\\[-0.2cm]
\hline
\hline
&&\\[-0.2cm]

$
\begin{pmatrix}
1 & 1 & 1 \\
1 & 0 & 0 \\
\alpha & 0 & 0
\end{pmatrix}$
  &
$
\begin{pmatrix}
1 & 1 & 1 \\
\alpha \sqrt{-1} & 0 & 0 \\
\alpha & 0 & 0
\end{pmatrix}$

&
$
\begin{pmatrix}
1 & 1 & 1 \\
-\alpha \sqrt{-1} & 0 & 0 \\
\alpha & 0 & 0
\end{pmatrix}$

\\

&&\\
\hline

\end{tabular}}

{\footnotesize {TABLE 9.  ${\rm dim}(A^2)=2$; $\alpha \neq 0$; ${\rm dim}({\rm ann}(A))=0$;  }}\\
{\footnotesize {$A$ has not Property (2LI); five non-zero entries of the matrices in $S$. }}

\end{center}

\begin{center}

\scalebox{0.7}{

\begin{tabular}{|c||c||c|}

\hline
&&\\
$M_{B}$& $P_{B'B}$ & $M_{B'}$\\
&&\\[-0.2cm]
\hline
\hline
&&\\[-0.2cm]
${
\begin{pmatrix}
1 & 1 & 1 \\
1 & 0 & 0 \\
\alpha & 0 & 0
\end{pmatrix}}
$  &
${
\begin{pmatrix}
1 & 0 & 0 \\
0 & 1 & 0 \\
0 & 0 & -1
\end{pmatrix}}$
&
${
\begin{pmatrix}
1 & 1 & 1\\
1 & 0 & 0 \\
-\alpha & 0 & 0
\end{pmatrix}} $
\\

&&\\[-0.2cm]
\hline
\hline
&&\\[-0.2cm]

${
\begin{pmatrix}
1 & 1 & 1 \\
\alpha \sqrt{-1} & 0 & 0 \\
\alpha & 0 & 0
\end{pmatrix}}
$  &
${
\begin{pmatrix}
1 & 0 & 0 \\
0 & p_{22} & -\sqrt{1-p_{22}^2} \\
0 & \sqrt{1-p_{22}^2} & p_{22}
\end{pmatrix}}$
&
${
\begin{pmatrix}
1 & 1 & 1\\
\alpha \sqrt{-1}(p_{22}-\sqrt{-1+p_{22}^2}) & 0 & 0 \\
\alpha(p_{22}-\sqrt{-1+p_{22}^2}) & 0 & 0
\end{pmatrix}} $
\\

&&\\[-0.2cm]
\hline
\hline
&&\\[-0.2cm]

${
\begin{pmatrix}
1 & 1 & 1 \\
\alpha \sqrt{-1} & 0 & 0 \\
\alpha & 0 & 0
\end{pmatrix}}
$  &
${
\begin{pmatrix}
1 & 0 & 0 \\
0 & p_{22} & \sqrt{1-p_{22}^2} \\
0 & -\sqrt{1-p_{22}^2} & p_{22}
\end{pmatrix}}$
&
${
\begin{pmatrix}
1 & 1 & 1\\
\alpha \sqrt{-1}(p_{22}+\sqrt{-1+p_{22}^2}) & 0 & 0 \\
\alpha(p_{22}+\sqrt{-1+p_{22}^2}) & 0 & 0
\end{pmatrix}} $
\\
&&\\[-0.2cm]
\hline
\hline
&&\\[-0.2cm]

${
\begin{pmatrix}
1 & 1 & 1 \\
-\alpha \sqrt{-1} & 0 & 0 \\
\alpha & 0 & 0
\end{pmatrix}}
$  &
${
\begin{pmatrix}
1 & 0 & 0 \\
0 & p_{22} & -\sqrt{1-p_{22}^2} \\
0 & \sqrt{1-p_{22}^2} & p_{22}
\end{pmatrix}}$
&
${
\begin{pmatrix}
1 & 1 & 1\\
-\alpha \sqrt{-1}(p_{22}+\sqrt{-1+p_{22}^2}) & 0 & 0 \\
\alpha(p_{22}+\sqrt{-1+p_{22}^2}) & 0 & 0
\end{pmatrix}} $
\\
&&\\[-0.2cm]
\hline
\hline
&&\\[-0.2cm]
${
\begin{pmatrix}
1 & 1 & 1 \\
-\alpha \sqrt{-1} & 0 & 0 \\
\alpha & 0 & 0
\end{pmatrix}}
$  &
${
\begin{pmatrix}
1 & 0 & 0 \\
0 & p_{22} & \sqrt{1-p_{22}^2} \\
0 & -\sqrt{1-p_{22}^2} & p_{22}
\end{pmatrix}}$
&
${
\begin{pmatrix}
1 & 1 & 1\\
-\alpha \sqrt{-1}(p_{22}-\sqrt{-1+p_{22}^2}) & 0 & 0 \\
\alpha(p_{22}-\sqrt{-1+p_{22}^2}) & 0 & 0
\end{pmatrix}} $
\\
&&\\
\hline

\end{tabular}}

{\footnotesize {TABLE $9^\prime$.}}

\end{center}

\medskip

\begin{center}

\scalebox{0.7}{

\begin{tabular}{|c||c||c|}

\hline
&&\\[-0.2cm]

$
\begin{pmatrix}
0 & 0 & 0 \\
1 & 1 & 1  \\
\alpha & 1 & 1
\end{pmatrix}$

&

$
\begin{pmatrix}
0 & 0 & 0 \\
1 & 1 & 1  \\
\alpha & \sqrt{-1} & \sqrt{-1}
\end{pmatrix}$
&

$
\begin{pmatrix}
0 & 0 & 0 \\
1 & 1 & 1  \\
\alpha & -\sqrt{-1} & -\sqrt{-1}
\end{pmatrix}$

\\

&&\\
\hline

\end{tabular}}

{\footnotesize {TABLE 10.  ${\rm dim}(A^2)=2$; $\alpha \neq 0$; ${\rm dim}({\rm ann}(A))=0$;  }}\\
{\footnotesize {$A$ has not Property (2LI); six non-zero entries of the matrices in $S$. }}

\end{center}

\begin{center}

\scalebox{0.7}{

\begin{tabular}{|c||c||c|}

\hline
&&\\
$M_{B}$& $P_{B'B}$ & $M_{B'}$\\
&&\\[-0.2cm]
\hline
\hline
&&\\[-0.2cm]

${
\begin{pmatrix}
0 & 0 & 0 \\
1 & 1 & 1 \\
\alpha & 1 & 1
\end{pmatrix}}
$  &
${
\begin{pmatrix}
\dfrac{1}{\sqrt{\alpha}} & 0 & 0 \\
0 & 0 &  1 \\
0  & 1 & 0
\end{pmatrix}}$
&
${
\begin{pmatrix}
0 & 0 & 0 \\
1 & 1 & 1 \\
\dfrac{1}{\alpha} & 1 & 1
\end{pmatrix}} $
\\

&&\\[-0.2cm]
\hline
\hline
&&\\[-0.2cm]

${
\begin{pmatrix}
0 & 0 & 0 \\
1 & 1 & 1 \\
\alpha & \sqrt{-1} & \sqrt{-1}
\end{pmatrix}}
$  &
${
\begin{pmatrix}
p_{11} & 0 & 0 \\
0 & \dfrac{\sqrt{-1}+\alpha p_{11}^2}{2\sqrt{-1}+(-\sqrt{-1}+\alpha)p_{11}^2} & \dfrac{-1+p_{11}^2}{2\sqrt{-1}+(-\sqrt{-1}+\alpha)p_{11}^2}  \\
0  & \dfrac{1-p_{11}^2}{2\sqrt{-1}+(-\sqrt{-1}+\alpha)p_{11}^2} & \dfrac{\sqrt{-1}+\alpha p_{11}^2}{2\sqrt{-1}+(-\sqrt{-1}+\alpha)p_{11}^2}
\end{pmatrix}}$
&
${
\begin{pmatrix}
0 & 0 & 0 \\
1 & 1 & 1 \\
\sqrt{-1}+(-\sqrt{-1}+\alpha)p_{11}^2 & \sqrt{-1} & \sqrt{-1}
\end{pmatrix}} $
\\

&&\\[-0.2cm]
\hline
\hline
&&\\[-0.2cm]

${
\begin{pmatrix}
0 & 0 & 0 \\
1 & 1 & 1 \\
\alpha & -\sqrt{-1} & -\sqrt{-1}
\end{pmatrix}}
$  &
${
\begin{pmatrix}
p_{11} & 0 & 0 \\
0 & \dfrac{-\sqrt{-1}+\alpha p_{11}^2}{-2\sqrt{-1}+(\sqrt{-1}+\alpha)p_{11}^2} & \dfrac{-1+p_{11}^2}{-2\sqrt{-1}+(\sqrt{-1}+\alpha)p_{11}^2}  \\
0  & \dfrac{1-p_{11}^2}{-2\sqrt{-1}+(\sqrt{-1}+\alpha)p_{11}^2} & \dfrac{-\sqrt{-1}+\alpha p_{11}^2}{-2\sqrt{-1}+(\sqrt{-1}+\alpha)p_{11}^2}
\end{pmatrix}}$
&
${
\begin{pmatrix}
0 & 0 & 0 \\
1 & 1 & 1 \\
-\sqrt{-1}+(\sqrt{-1}+\alpha)p_{11}^2 & -\sqrt{-1} & -\sqrt{-1}
\end{pmatrix}} $
\\

&&\\
\hline

\end{tabular}}

{\footnotesize {TABLE $10^\prime$. }}

\end{center}

\medskip

\begin{center}

\scalebox{0.7}{

\begin{tabular}{|c||c||c|}

\hline
&&\\[-0.2cm]

$
\begin{pmatrix}
\alpha & 1 &  1 \\
0 & 1 & 1  \\
0 & 1 & 1
\end{pmatrix}$
&
$
\begin{pmatrix}
\alpha & 1 &  1 \\
0 & 1 & 1  \\
0 & \sqrt{-1} & \sqrt{-1}
\end{pmatrix}$
&
$
\begin{pmatrix}
\alpha & 1 &  1 \\
0 & 1 & 1  \\
0 & -\sqrt{-1} & -\sqrt{-1}
\end{pmatrix}$

\\
&&\\[-0.2cm]
\hline
\hline
&&\\[-0.2cm]

$
\begin{pmatrix}
1 & 0 & 0 \\
\alpha & 1 & 1 \\
\beta & 1 & 1
\end{pmatrix}$
&
$
\begin{pmatrix}
1 & 0 & 0 \\
\alpha & 1 & 1 \\
\beta & \sqrt{-1} & \sqrt{-1}
\end{pmatrix}$
 &
$
\begin{pmatrix}
1 & 0 & 0 \\
\alpha & 1 & 1 \\
\beta & -\sqrt{-1} & -\sqrt{-1}
\end{pmatrix}$
\\

&&\\
\hline

\end{tabular}}

{\footnotesize {TABLE 11.  ${\rm dim}(A^2)=2$; $\alpha\beta\neq 0$; ${\rm dim}({\rm ann}(A))=0$;  }}\\
{\footnotesize {$A$ has not Property (2LI); seven non-zero entries of the matrices in $S$. }}

\end{center}

\newpage

\begin{center}

\scalebox{0.7}{

\begin{tabular}{|c||c||c|}

\hline
&&\\
$M_{B}$& $P_{B'B}$ & $M_{B'}$\\
&&\\[-0.2cm]
\hline
\hline
&&\\[-0.2cm]
${
\begin{pmatrix}
\alpha & 1 & 1 \\
0 & 1 & 1 \\
0 & \sqrt{-1}  & \sqrt{-1}
\end{pmatrix}}
$  &
${
 \begin{pmatrix}
-1+2p_{22} & 0 & 0 \\
0 & p_{22} &  \sqrt{-1}(1-p_{22}) \\
0  & \sqrt{-1}(-1+p_{22}) & p_{22}
\end{pmatrix}}$
&
${
\begin{pmatrix}
\alpha(-1+2p_{22}) & 1 & 1 \\
0 & 1 & 1 \\
0 & \sqrt{-1} & \sqrt{-1}
\end{pmatrix}} $
\\
&&\\[-0.2cm]
\hline
\hline
&&\\[-0.2cm]
${
\begin{pmatrix}
\alpha & 1 & 1 \\
0 & 1 & 1 \\
0 & -\sqrt{-1}  & -\sqrt{-1}
\end{pmatrix}}
$  &
${
 \begin{pmatrix}
-1+2p_{22} & 0 & 0 \\
0 & p_{22} &  \sqrt{-1}(-1+p_{22}) \\
0  & -\sqrt{-1}(-1+p_{22}) & p_{22}
\end{pmatrix}}$
&
${
\begin{pmatrix}
\alpha(-1+2p_{22}) & 1 & 1 \\
0 & 1 & 1 \\
0 & -\sqrt{-1} & -\sqrt{-1}
\end{pmatrix}} $
\\
\\
&&\\[-0.2cm]
\hline
\hline
&&\\[-0.2cm]

${
\begin{pmatrix}
1 & 0 & 0 \\
\alpha & 1 & 1 \\
\beta & \sqrt{-1} & \sqrt{-1}
\end{pmatrix}}
$  &
${
\begin{pmatrix}
1 & 0 & 0 \\
0 & p_{22} & -\sqrt{-1}(-1+p_{22}) \\
0 & \sqrt{-1}(-1+p_{22}) & p_{22}
\end{pmatrix}}$
&
${
\begin{pmatrix}
1 & 0 & 0  \\
\dfrac{\sqrt{-1}\beta(-1+p_{22})+\alpha p_{22}}{-1+2p_{22}} & 1 & 1 \\
\dfrac{-\sqrt{-1}\alpha(-1+p_{22})+\beta p_{22}}{-1+2p_{22}} & \sqrt{-1} & \sqrt{-1}
\end{pmatrix}} $
\\
&&\\[-0.2cm]
\hline
\hline
&&\\[-0.2cm]

${
\begin{pmatrix}
1 & 0 & 0 \\
\alpha & 1 & 1 \\
\beta & -\sqrt{-1} & -\sqrt{-1}
\end{pmatrix}}
$  &
${
\begin{pmatrix}
1 & 0 & 0 \\
0 & p_{22} & \sqrt{-1}(-1+p_{22}) \\
0 & -\sqrt{-1}(-1+p_{22}) & p_{22}
\end{pmatrix}}$
&
${
\begin{pmatrix}
1 & 0 & 0  \\
\dfrac{-\sqrt{-1}\beta(-1+p_{22})+\alpha p_{22}}{-1+2p_{22}} & 1 & 1 \\
\dfrac{\sqrt{-1}\alpha(-1+p_{22})+\beta p_{22}}{-1+2p_{22}} & \sqrt{-1} & \sqrt{-1}
\end{pmatrix}} $
\\
&&\\
\hline

\end{tabular}}

{\footnotesize {TABLE $11^\prime$.}}

\end{center}

\begin{center}

\scalebox{0.7}{

\begin{tabular}{|c||c||c|}

\hline
&&\\[-0.2cm]

$
\begin{pmatrix}
0 & 1 & 1 \\
\alpha & 1 & 1 \\
\beta & 1 & 1
\end{pmatrix}$

&

$
\begin{pmatrix}
0 & 1 & 1 \\
\alpha & 1 & 1 \\
\beta & \sqrt{-1} & \sqrt{-1}
\end{pmatrix}$
&

$
\begin{pmatrix}
0 & 1 & 1 \\
\alpha & 1 & 1 \\
\beta & -\sqrt{-1} & -\sqrt{-1}
\end{pmatrix}$

\\

&&\\
\hline

\end{tabular}}

{\footnotesize {TABLE 12.  ${\rm dim}(A^2)=2$; $\alpha\beta \neq 0$; ${\rm dim}({\rm ann}(A))=0$;  }}\\
{\footnotesize {$A$ has not Property (2LI); eight non-zero entries of the matrices in $S$. }}

\end{center}

\begin{center}

\scalebox{0.7}{

\begin{tabular}{|c||c||c|}

\hline
&&\\
$M_{B}$& $P_{B'B}$ & $M_{B'}$\\
&&\\[-0.2cm]
\hline
\hline
&&\\[-0.2cm]

${
\begin{pmatrix}
0 & 1 & 1 \\
\alpha & 1 & 1 \\
\beta & \sqrt{-1} & \sqrt{-1}
\end{pmatrix}}
$  &
${
\begin{pmatrix}
1+2 \sqrt{-1}p_{23} & 0 & 0 \\
0 & 1+\sqrt{-1}p_{23} & p_{23} \\
0 & -p_{23} & 1+\sqrt{-1}p_{23}
\end{pmatrix}}$
&
${
\begin{pmatrix}
0 & 1 & 1  \\
(\sqrt{-1}-2p_{23})(\sqrt{-1}\beta p_{23}+\alpha (-\sqrt{-1}+p_{23})) & 1 & 1 \\
(1+2\sqrt{-1}p_{23})(\beta+ \alpha p_{23}+\beta \sqrt{-1}p_{23}) & \sqrt{-1} & \sqrt{-1}
\end{pmatrix}} $
\\
&&\\[-0.2cm]
\hline
\hline
&&\\[-0.2cm]
${
\begin{pmatrix}
0 & 1 & 1 \\
\alpha & 1 & 1 \\
\beta & -\sqrt{-1} & -\sqrt{-1}
\end{pmatrix}}
$  &
${
\begin{pmatrix}
1-2 \sqrt{-1}p_{23} & 0 & 0 \\
0 & 1-\sqrt{-1}p_{23} & p_{23} \\
0 & -p_{23} & 1-\sqrt{-1}p_{23}
\end{pmatrix}}$
&
${
\begin{pmatrix}
0 & 1 & 1  \\
(-1+2\sqrt{-1}p_{23})(\beta p_{23}+\alpha (-1+\sqrt{-1}p_{23})) & 1 & 1 \\
(1-2\sqrt{-1}p_{23})(\beta+ \alpha p_{23}-\beta \sqrt{-1}p_{23}) & -\sqrt{-1} & -\sqrt{-1}
\end{pmatrix}} $
\\
&&\\
\hline

\end{tabular}}

{\footnotesize {TABLE $12^\prime$.}}

\end{center}

\medskip

\begin{center}

\scalebox{0.7}{

\begin{tabular}{|c||c||c|}

\hline
&&\\[-0.2cm]
$
\begin{pmatrix}
\alpha & 1 & 1 \\
\beta & 1 & 1 \\
\gamma & 1 & 1
\end{pmatrix}$

&

$
\begin{pmatrix}
\alpha & 1 & 1 \\
\beta & 1 & 1 \\
\gamma & \sqrt{-1} & \sqrt{-1}
\end{pmatrix}$
&
$
\begin{pmatrix}
\alpha & 1 & 1 \\
\beta & 1 & 1 \\
\gamma & -\sqrt{-1} & -\sqrt{-1}
\end{pmatrix}$

\\

&&\\
\hline

\end{tabular}}

{\footnotesize {TABLE 13.  ${\rm dim}(A^2)=2$; $\alpha\beta\gamma \neq 0$; ${\rm dim}({\rm ann}(A))=0$;  }} \\
{\footnotesize {$A$ has not Property (2LI); nine non-zero entries of the matrices in $S$. }}

\end{center}
\newpage

\begin{center}

\scalebox{0.7}{

\begin{tabular}{|c||c||c|}

\hline
&&\\
$M_{B}$& $P_{B'B}$ & $M_{B'}$\\
&&\\[-0.2cm]
\hline
\hline
&&\\[-0.2cm]
${
\begin{pmatrix}
\alpha & 1 & 1 \\
\beta & 1 & 1 \\
\gamma & \sqrt{-1} & \sqrt{-1}
\end{pmatrix}}
$  &
${
\begin{pmatrix}
-1+2 p_{22} & 0 & 0 \\
0 & p_{22} & \sqrt{-1}(1-p_{22}) \\
0 & -\sqrt{-1}(1-p_{22}) & p_{22}
\end{pmatrix}}$
&
${
\begin{pmatrix}
\alpha(-1+2p_{22}) & 1 & 1  \\
(-1+2p_{22})(\sqrt{-1}\gamma(-1+ p_{22})+\beta p_{22}) & 1 & 1 \\
(-1+2p_{22})(-\sqrt{-1}\beta(-1+ p_{22})+\gamma p_{22}) & \sqrt{-1} & \sqrt{-1}
\end{pmatrix}} $
\\

&&\\[-0.2cm]
\hline
\hline
&&\\[-0.2cm]
${
\begin{pmatrix}
\alpha & 1 & 1 \\
\beta & 1 & 1 \\
\gamma & -\sqrt{-1} & -\sqrt{-1}
\end{pmatrix}}
$  &
${
\begin{pmatrix}
-1+2 p_{22} & 0 & 0 \\
0 & p_{22} & \sqrt{-1}(-1+p_{22}) \\
0 & -\sqrt{-1}(-1+p_{22}) & p_{22}
\end{pmatrix}}$
&
${
\begin{pmatrix}
\alpha(-1+2p_{22}) & 1 & 1  \\
(-1+2p_{22})(-\sqrt{-1}\gamma(-1+ p_{22})+\beta p_{22}) & 1 & 1 \\
(-1+2p_{22})(\sqrt{-1}\beta(-1+ p_{22})+\gamma p_{22}) & -\sqrt{-1} & -\sqrt{-1}
\end{pmatrix}} $
\\

&&\\
\hline

\end{tabular}}

{\footnotesize {TABLE $13^\prime$.}}

\end{center}


 \noindent
 {\bf Case 3} Assume $c_2=0, c_1 \neq 0$.

\noindent
Considering the natural basis $B'=\{e_{2},e_{1},e_{3} \}$ we obtain the following structure matrix:

$$ M_{B'}=
 \begin{pmatrix}
 \omega_{22}  & \omega_{21} & c_1\omega_{21}\\
 \omega_{12}  & \omega_{11} & c_1\omega_{11}\\
 \omega_{32}  & \omega_{31} & c_1\omega_{31}\\
  \end{pmatrix}, $$

\noindent
and now we are in the same conditions as in Case 2.\\
\medskip

\noindent
{\bf Case 4} Suppose $c_1=c_2=0$.

\noindent
Recall by \eqref{matrizrangodos} that the structure matrix is

$$ M_{B}=
 \begin{pmatrix}
 \omega_{11}  & \omega_{12} & 0\\
 \omega_{21}  & \omega_{22} & 0\\
 \omega_{31}  & \omega_{32} & 0\\
  \end{pmatrix}. $$

\begin{remark}\label{EntradasNoNulasEnFilas}
\rm
In what follows we are going to prove that the number of zero entries in the first and in the second rows in the structure matrix is preserved by any change of basis.
\end{remark}

With the explained goal  in mind, we study all the possible change of basis matrices. {Let} $B'$ be another natural basis and consider the change of basis matrix $P_{B'B}$. The equations \eqref{cuartas}, \eqref{quintas}, \eqref{segundas} and \eqref{terceras} give:

\begin{align*}
p_{11}p_{12}=0;\quad p_{21}p_{22}=0;\\
p_{11}p_{13}=0; \quad p_{21}p_{23}=0;\\
p_{12}p_{13}=0; \quad p_{22}p_{23}=0.\\
\end{align*}

It is easy to check that $P_{B'B}$ has two zero entries in the first and the second rows. Moreover, since $\vert P_{B'B}\vert \neq 0$, necessarily $p_{1i}p_{2j} \neq 0$ for $i,j \in \{1,2,3\}$ with  $i \neq j$. We distinguish the six different cases that appear in order to study the structure matrix $M_{B'}$.
\\

If
$
P_{B'B}=
\begin{pmatrix}
p_{11} & 0 & 0 \\
0 & p_{22} & 0 \\
p_{31} & p_{32} & p_{33}
\end{pmatrix}
$ with $p_{11}p_{22}p_{33} \neq 0$ then

\begin{equation}\label{matrixpermutc0}
\tiny
M_{B'}=
\begin{pmatrix}
\omega_{11} p_{11} & \dfrac{\omega_{12} p_{22}^2}{p_{11}} & 0 \\
\dfrac{\omega_{21} p_{11}^2}{p_{22}} & \omega_{22} p_{22} & 0 \\
\dfrac{p_{11} (\omega_{31} p_{11} p_{22}-\omega_{11} p_{31} p_{22}-\omega_{21} p_{11} p_{32})}{p_{22} p_{33}}  & \dfrac{p_{22} (\omega_{32} p_{11} p_{22}-\omega_{12} p_{31} p_{22}-\omega_{22} p_{11} p_{32})}{p_{11} p_{33}} & 0
\end{pmatrix}.
\end{equation}
\medskip

\noindent
If
$
P_{B'B}=
\begin{pmatrix}
p_{11} & 0 & 0 \\
0 & 0 & p_{23} \\
p_{31} & p_{32} & p_{33}
\end{pmatrix}
$  for $p_{11}p_{23}p_{32} \neq 0$ then

$$
\tiny
M_{B'}=
\begin{pmatrix}
\omega_{11} p_{11} & 0 &  \dfrac{\omega_{12} p_{23}^2}{p_{11}}\\
\dfrac{p_{11} (\omega_{31} p_{11} p_{23}-\omega_{11} p_{31} p_{23}-\omega_{21} p_{11} p_{33})}{p_{23} p_{32}}  & 0 & \dfrac{p_{23} (\omega_{32} p_{11} p_{23}-\omega_{12} p_{31} p_{23}-\omega_{22} p_{11} p_{33})}{p_{11} p_{32}}  \\
\dfrac{\omega_{21} p_{11}^2}{p_{23}} & 0 & \omega_{22} p_{23}   \\
\end{pmatrix}.$$
\medskip

\noindent
If
$
P_{B'B}=
\begin{pmatrix}
0 & p_{12} & 0 \\
p_{21} & 0 & 0 \\
p_{31} & p_{32} & p_{33}
\end{pmatrix}
$  where $p_{12}p_{21}p_{33} \neq 0$ then

\begin{equation}\label{matrixpermutc1}
\tiny
M_{B'}=
\begin{pmatrix}
\omega_{22} p_{21} & \dfrac{\omega_{21} p_{12}^2}{p_{21}} & 0 \\
\dfrac{\omega_{12} p_{21}^2}{p_{12}} & \omega_{11} p_{12} & 0 \\
\dfrac{p_{21} (\omega_{32} p_{12} p_{21}-\omega_{12} p_{32} p_{21}-\omega_{22} p_{12} p_{31})}{p_{12} p_{33}}  & \dfrac{p_{12} (\omega_{31} p_{12} p_{21}-\omega_{11} p_{32} p_{21}-\omega_{21} p_{12} p_{31})}{p_{21} p_{33}} & 0
\end{pmatrix}.
\end{equation}
\medskip

\noindent
If
$
P_{B'B}=
\begin{pmatrix}
0 & p_{12} & 0 \\
0 & 0 & p_{23} \\
p_{31} & p_{32} & p_{33}
\end{pmatrix}
$  with $p_{12}p_{23}p_{31} \neq 0$ then

$$
\tiny
M_{B'}=
\begin{pmatrix}
0 & \dfrac{p_{12} (\omega_{31} p_{12} p_{23}-\omega_{11} p_{32} p_{23}-\omega_{21} p_{12} p_{33})}{p_{23} p_{31}}  & \dfrac{p_{23} (\omega_{32} p_{12} p_{23}-\omega_{12} p_{32} p_{23}-\omega_{22} p_{12} p_{33})}{p_{12} p_{31}}\\
0 & \omega_{11} p_{12} & \dfrac{\omega_{12} p_{23}^2}{p_{12}} \\
0 & \dfrac{\omega_{21} p_{12}^2}{p_{23}} & \omega_{22} p_{23} \\
\end{pmatrix}.$$
\medskip

\noindent
If
$
P_{B'B}=
\begin{pmatrix}
0 & 0 & p_{13} \\
p_{21} & 0 & 0 \\
p_{31} & p_{32} & p_{33}
\end{pmatrix}
$ for $p_{13}p_{21}p_{32} \neq 0$ then

$$
\tiny
M_{B'}=
\begin{pmatrix}
\omega_{22} p_{21} & 0 &  \dfrac{\omega_{21} p_{13}^2}{p_{21}}\\
\dfrac{p_{21} (\omega_{32} p_{13} p_{21}-\omega_{12} p_{33} p_{21}-\omega_{22} p_{13} p_{31})}{p_{13} p_{32}}  & 0 & \dfrac{p_{13} (\omega_{31} p_{13} p_{21}-\omega_{11} p_{33} p_{21}-\omega_{21} p_{13} p_{31})}{p_{21} p_{32}}  \\
\dfrac{\omega_{12} p_{21}^2}{p_{13}} & 0 & \omega_{11} p_{13}   \\
\end{pmatrix}.$$
\medskip

\noindent
If
$
P_{B'B}=
\begin{pmatrix}
0 & 0 & p_{13} \\
0 & p_{22} & 0 \\
p_{31} & p_{32} & p_{33}
\end{pmatrix}
$  where $p_{13}p_{21}p_{32} \neq 0$  then

$$
\tiny
M_{B'}=
\begin{pmatrix}
0 & \dfrac{p_{22} (\omega_{32} p_{13} p_{22}-\omega_{12} p_{33} p_{22}-\omega_{22} p_{13} p_{32})}{p_{13} p_{31}}  & \dfrac{p_{13} (\omega_{31} p_{13} p_{22}-\omega_{11} p_{33} p_{22}-\omega_{21} p_{13} p_{32})}{p_{22} p_{31}}\\
0 & \omega_{22} p_{22} & \dfrac{\omega_{21} p_{13}^2}{p_{22}} \\
0 & \dfrac{\omega_{12} p_{22}^2}{p_{13}} & \omega_{11} p_{13} \\
\end{pmatrix}.$$
\medskip

Note that we only have to take in to account the change of basis matrices which transform a structure matrix having the third column equals zero into another one of the same type. These are those $P_{B'B}$ appearing in the first and in the third cases. We denote them by $Q'$ and by $Q''$, respectively. Looking at the different $M_{B'}$ that appear, we obtain the claim.

Then, if we omit the structure matrices which can be obtained from the permutation $(1,2)$, the only possibilities are:

\begin{equation*}
\footnotesize{
\left\{
\begin{pmatrix}
\omega_{11} & 0 &  0 \\
0 & \omega_{22} & 0  \\
\omega_{31} & \omega_{32} & 0
\end{pmatrix},
\begin{pmatrix}
0 & \omega_{12} &  0 \\
\omega_{21} & 0 & 0  \\
\omega_{31} & \omega_{32} & 0
\end{pmatrix},
\begin{pmatrix}
\omega_{11} & 0 &  0 \\
\omega_{21} & \omega_{22} & 0  \\
\omega_{31} & \omega_{32} & 0
\end{pmatrix},
\begin{pmatrix}
0 & \omega_{12} &  0 \\
\omega_{21} & \omega_{22} & 0  \\
\omega_{31} & \omega_{32} & 0
\end{pmatrix},
\begin{pmatrix}
\omega_{11} & \omega_{12} &  0 \\
\omega_{21} & \omega_{22} & 0  \\
\omega_{31} & \omega_{32} & 0
\end{pmatrix}
\right\}\  \bigcup}
\end{equation*}

\begin{equation*}
\footnotesize{
\left\{
\begin{pmatrix}
\omega_{11} & 0 &  0 \\
0 & 0 & 0  \\
\omega_{31} & \omega_{32} & 0
\end{pmatrix},
\begin{pmatrix}
\omega_{11} & \omega_{12} &  0 \\
0 & 0 & 0  \\
\omega_{31} & \omega_{32} & 0
\end{pmatrix},
\begin{pmatrix}
\omega_{11} & 0 &  0 \\
\omega_{21} & 0 & 0  \\
\omega_{31} & \omega_{32} & 0
\end{pmatrix}
\begin{pmatrix}
0 & \omega_{12} &  0 \\
0 & 0 & 0  \\
\omega_{31} & \omega_{32} & 0
\end{pmatrix}
\right\}}
\end{equation*}

\noindent
with $\omega_{ij} \neq 0$ for $i, j\in \{1, 2\}$.

According to \eqref{matrixpermutc0} and \eqref{matrixpermutc1}, we claim that we can remove the third row of the structure matrices of the first set and write 0 if and only if $\omega_{11}\omega_{22}-\omega_{12}\omega_{21} \neq 0$. For the matrix \eqref{matrixpermutc0} we consider $p_{11}=p_{22}=1$ and we have

\begin{align*}
\omega_{31}p_{11}p_{22}-\omega_{11}p_{31}p_{22}-\omega_{21}p_{11}p_{32}=\omega_{31}-\omega_{11}p_{31}-\omega_{21}p_{32} & =0; \\
\omega_{32}p_{11}p_{22}-\omega_{12}p_{31}p_{22}-\omega_{22}p_{11}p_{32}=\omega_{31}-\omega_{12}p_{31}-\omega_{22}p_{32} & =0. \\
\end{align*}

\noindent
So, this linear system has solution if $\omega_{11}\omega_{22}- \omega_{12}\omega_{21} \neq 0$.

If we take \eqref{matrixpermutc1}, we reason in the same way and our claim has been proved. \\

Now we can place 0 instead of $\omega_{31}$ in the first three matrices of the second set. Indeed, as in these structure matrices $\omega_{11} \neq 0$ and supposing $p_{11}=p_{22}=1$ we have the equation $\omega_{31}-\omega_{11}p_{31}-\omega_{21}p_{32} =0$ if $\omega_{21}\neq 0$ and $\omega_{31}-\omega_{11}p_{31}=0$ if $\omega_{21}=0$. In any case, the equations have always solution.

In the last structure matrix of the second set we can write 0 instead of $\omega_{32}$. For this, it is enough to take $p_{22}=p_{11}=0$ and  $p_{31}=\frac{\omega_{32}}{\omega_{12}}$.

Finally, we can obtain the maximum number of entries equal 1 by using \eqref{producto2}. When placing 1 is not possible we write the parameters $\alpha$, $\beta$ and $\gamma$. 
Summarizing, there are ten possibilities which are listed below.

\begin{center}

\scalebox{0.7}{
\begin{tabular}{|c||c|}

\hline
&\\
&(1,2)\\
&\\[-0.2cm]
\hline
\hline
&\\[-0.2cm]
${
\begin{pmatrix}
1 & 0 & 0 \\
0 & 0 & 0 \\
0  & 1 & 0
\end{pmatrix}}
$  &

${
 \begin{pmatrix}
0 & 0 & 0 \\
0 & 1 & 0 \\
1  & 0 & 0
\end{pmatrix}}$
 \\
&\\[-0.2cm]
\hline
\hline
&\\[-0.2cm]

${
\begin{pmatrix}
0 & 1 & 0 \\
0 & 0 & 0 \\
1  & 0 & 0
\end{pmatrix}}
$  &

${
 \begin{pmatrix}
0 & 0 & 0 \\
1 & 0 & 0 \\
0  & 1 & 0
\end{pmatrix}}$
\\
&\\
\hline
\end{tabular}
}

{\footnotesize {TABLE 14. {${\rm dim}(A^2)=2$;  ${\rm dim}({\rm ann}(A))=1$; }}}\\

{\footnotesize {one non-zero entry in the first and second rows.}}

\end{center}

\begin{center}

\scalebox{0.7}{
\begin{tabular}{|c||c|}

\hline
&\\
&(1,2)\\
&\\[-0.2cm]
\hline
\hline
&\\[-0.2cm]

${
\begin{pmatrix}
1 & 0 & 0 \\
0 & 1 & 0 \\
0  & 0 & 0

\end{pmatrix}}
$  &

${
 \begin{pmatrix}
1 & 0 & 0 \\
0 & 1 & 0 \\
0  & 0 & 0
\end{pmatrix}}$
\\
&\\[-0.2cm]
\hline
\hline
&\\[-0.2cm]

${
\begin{pmatrix}
0 & 1 & 0 \\
1 & 0 & 0 \\
0  & 0 & 0
\end{pmatrix}}
$  &

${
 \begin{pmatrix}
0 & 1 & 0 \\
1 & 0 & 0 \\
0  & 0 & 0
\end{pmatrix}}$
\\
&\\[-0.2cm]
\hline
\hline
&\\[-0.2cm]

${
\begin{pmatrix}
1 & 1 & 0 \\
0 & 0 & 0 \\
1  & 0 & 0

\end{pmatrix}}
$  &

${
 \begin{pmatrix}
0 & 0 & 0 \\
1 & 1 & 0 \\
0  & 1 & 0

\end{pmatrix}}$
 \\
&\\[-0.2cm]
\hline
\hline
&\\[-0.2cm]

${
\begin{pmatrix}
1 & 0 & 0 \\
1 & 0 & 0 \\
0  & 1 & 0

\end{pmatrix}}
$  &

${
 \begin{pmatrix}
0 & 1 & 0 \\
0 & 1 & 0 \\
1  & 0 & 0

\end{pmatrix}}$
 \\
&\\
\hline
\end{tabular}
}

{\footnotesize {TABLE 15. {${\rm dim}(A^2)=2$; ${\rm dim}({\rm ann}(A))=1$;}}}\\

{\footnotesize {two non-zero entries in the first and second rows.}}

\end{center}

\begin{center}

\scalebox{0.7}{
\begin{tabular}{|c||c|}

\hline
&\\
&(1,2)\\
&\\[-0.2cm]
\hline
\hline
&\\[-0.2cm]
${
\begin{pmatrix}
\alpha & 0 & 0 \\
1 & 1 & 0 \\
0  & 0 & 0
\end{pmatrix}}
$  &

${
 \begin{pmatrix}
1 & 1 & 0 \\
0 & \alpha & 0 \\
0  & 0 & 0

\end{pmatrix}}$
 \\
&\\[-0.2cm]
\hline
\hline
&\\[-0.2cm]

${
\begin{pmatrix}
0 & \alpha & 0  \\
1 & 1 & 0 \\
0  & 0 & 0

\end{pmatrix}}
$  &

${
 \begin{pmatrix}
1 & 1 & 0 \\
\alpha & 0 & 0 \\
0  & 0 & 0

\end{pmatrix}}$
\\
&\\
\hline
\end{tabular}
}

{\footnotesize {TABLE 16. {${\rm dim}(A^2)=2$; ${\rm dim}({\rm ann}(A))=1$;}}}\\

{\footnotesize {three non-zero entries in the first and second rows.}}

\end{center}

\begin{center}

\scalebox{0.7}{

\begin{tabular}{|c||c||c|}

\hline
&&\\
$M_{B}$& $P_{B'B}$ & $M_{B'}$\\
&&\\[-0.2cm]
\hline
\hline
&&\\[-0.2cm]
${
\begin{pmatrix}
 \alpha & 0 & 0 \\
1 & 1 & 0 \\
0 & 0  & 0
\end{pmatrix}}
$  &
${
 \begin{pmatrix}
-1 & 0 &  0  \\
0 & 1  &  0 \\
0  & 0 & p_{33}
\end{pmatrix}}$
&
${
\begin{pmatrix}
-\alpha & 0 & 0 \\
1 & 1 & 0  \\
0 & 0  & 0
\end{pmatrix}} $
\\
&&\\[-0.2cm]
\hline
\hline
&&\\[-0.2cm]

${
\begin{pmatrix}
0 & \alpha & 0 \\
1 & 1  & 0 \\
0 & 0  & 0
\end{pmatrix}}
$  &
${
 \begin{pmatrix}
-1 & 0 & 0 \\
0 & 1 & 0 \\
0 & 0 & p_{33}
\end{pmatrix}}$
&
${
\begin{pmatrix}
0 & -\alpha & 0 \\
1 & 1 & 0 \\
0 & 0 & 0
\end{pmatrix}} $
\\

&&\\[-0.2cm]
\hline
\hline
&&\\[-0.2cm]

${
\begin{pmatrix}
1 & 1 & 0 \\
\alpha & \beta & 0 \\
0 & 0 & 0
\end{pmatrix}}
$  &
${
 \begin{pmatrix}
1 & 0 & 0 \\
0 & -1 & 0 \\
0 & 0 & p_{33}
\end{pmatrix}}$
&
${
\begin{pmatrix}
1 & 1 & 0 \\
- \alpha & -\beta & 0 \\
0 & 0 & 0
\end{pmatrix}} $
\\
&&\\
\hline
\end{tabular}
}

{\footnotesize {TABLE $16^\prime$.}}

\end{center}

\begin{center}

\scalebox{0.7}{
\begin{tabular}{|c||c|}

\hline
&\\
&(1,2)\\
&\\[-0.2cm]
\hline
\hline
&\\[-0.2cm]

${
\begin{pmatrix}
1 & 1 & 0 \\
\alpha & \beta & 0 \\
0  & 0 & 0

\end{pmatrix}}
$  &

${
 \begin{pmatrix}
\beta & \alpha & 0 \\
1 & 1 & 0 \\
0  & 0 & 0

\end{pmatrix}}$
 \\
&\\[-0.2cm]
\hline
\hline
&\\[-0.2cm]

${
\begin{pmatrix}
1 & 1 & 0 \\
\alpha & \alpha & 0 \\
1  & \beta & 0

\end{pmatrix}}
$  &

${
 \begin{pmatrix}
\alpha & \alpha & 0 \\
1 & 1 & 0 \\
\beta  & 1 & 0
\end{pmatrix}}$

\\
&\\
\hline
\end{tabular}
}

{\footnotesize {TABLE 17. {${\rm dim}(A^2)=2$; ${\rm dim}({\rm ann}(A))=1$;}}}\\

{\footnotesize {four non-zero entries in the first and second rows.}}

\end{center}
\newpage

\begin{center}

\scalebox{0.7}{

\begin{tabular}{|c||c||c|}

\hline
&&\\
$M_{B}$& $P_{B'B}$ & $M_{B'}$\\
&&\\[-0.2cm]
\hline
\hline
&&\\[-0.2cm]

${
\begin{pmatrix}
1 & 1 & 0 \\
\alpha & \beta & 0 \\
0 & 0 & 0
\end{pmatrix}}
$  &
${
 \begin{pmatrix}
1 & 0 & 0 \\
0 & -1 & 0 \\
0 & 0 & p_{33}
\end{pmatrix}}$
&
${
\begin{pmatrix}
1 & 1 & 0 \\
- \alpha & -\beta & 0 \\
0 & 0 & 0
\end{pmatrix}} $
\\

&&\\[-0.2cm]
\hline
\hline
&&\\[-0.2cm]

${
\begin{pmatrix}
1 & 1 & 0 \\
\alpha & \beta & 0 \\
0 & 0 & 0
\end{pmatrix}}
$  &
${
 \begin{pmatrix}
0 & \dfrac{\sqrt{\alpha \beta}}{\alpha \beta} & 0 \\
\dfrac{1}{\beta} & 0 & 0 \\
0 & 0 & p_{33}
\end{pmatrix}}$
&
${
\begin{pmatrix}
1 & 1 & 0 \\
\dfrac{\alpha \beta}{\beta^2} & \dfrac{\sqrt{\alpha \beta}}{\alpha \beta} & 0 \\
0 & 0 & 0
\end{pmatrix}} $
\\

&&\\[-0.2cm]
\hline
\hline
&&\\[-0.2cm]

${
\begin{pmatrix}
1 & 1 & 0 \\
\alpha & \beta & 0 \\
0 & 0 & 0
\end{pmatrix}}
$  &
${
 \begin{pmatrix}
0 & -\dfrac{\sqrt{\alpha \beta}}{\alpha \beta} & 0 \\
\dfrac{1}{\beta} & 0 & 0 \\
0 & 0 & p_{33}
\end{pmatrix}}$
&
${
\begin{pmatrix}
1 & 1 & 0 \\
-\dfrac{\alpha \beta}{\beta^2} & -\dfrac{\sqrt{\alpha \beta}}{\alpha \beta} & 0 \\
0 & 0 & 0
\end{pmatrix}} $
\\

&&\\[-0.2cm]
\hline
\hline
&&\\[-0.2cm]

${
\begin{pmatrix}
1 & 1 & 0 \\
\alpha & \beta & 0 \\
1 & 1 & 0
\end{pmatrix}}
$  &
${
 \begin{pmatrix}
1 & 0 & 0 \\
0 & -1 & 0 \\
p_{31} & 0 & 1-p_{31}
\end{pmatrix}}$
&
${
\begin{pmatrix}
1 & 1 & 0 \\
- \alpha & -\beta & 0 \\
1 & 1 & 0
\end{pmatrix}} $
\\
&&\\[-0.2cm]
\hline
\hline
&&\\[-0.2cm]

${
\begin{pmatrix}
1 & 1 & 0 \\
\alpha & \beta & 0 \\
1 & 1 & 0
\end{pmatrix}}
$  &
${
 \begin{pmatrix}
0 & \dfrac{\sqrt{\alpha \beta}}{\alpha \beta} & 0 \\
\dfrac{1}{\beta} & 0 & 0 \\
p_{31} & \dfrac{\sqrt{\alpha \beta}}{\alpha \beta} & - p_{31}
\end{pmatrix}}$
&
${
\begin{pmatrix}
1 & 1 & 0 \\
\dfrac{\alpha \beta}{\beta^2} & \dfrac{\sqrt{\alpha \beta}}{\alpha \beta} & 0 \\
1 & 1 & 0
\end{pmatrix}} $
\\

&&\\[-0.2cm]
\hline
\hline
&&\\[-0.2cm]

${
\begin{pmatrix}
1 & 1 & 0 \\
\alpha & \beta & 0 \\
1 & 1 & 0
\end{pmatrix}}
$  &
${
 \begin{pmatrix}
0 & -\dfrac{\sqrt{\alpha \beta}}{\alpha \beta} & 0 \\
\dfrac{1}{\beta} & 0 & 0 \\
p_{31} & -\dfrac{\sqrt{\alpha \beta}}{\alpha \beta} & - p_{31}
\end{pmatrix}}$
&
${
\begin{pmatrix}
1 & 1 & 0 \\
-\dfrac{\alpha \beta}{\beta^2} & -\dfrac{\sqrt{\alpha \beta}}{\alpha \beta} & 0 \\
1 & 1 & 0
\end{pmatrix}} $
\\
&&\\[-0.2cm]
\hline
\hline
&&\\[-0.2cm]

${
\begin{pmatrix}
1 & 1 & 0 \\
\alpha & \alpha & 0 \\
1 & \beta & 0
\end{pmatrix}}
$  &
${
 \begin{pmatrix}
1 & 0 & 0 \\
0 & -1 & 0 \\
1+\alpha p_{32}- p_{33} & p_{32}  &  p_{33}
\end{pmatrix}}$
&
${
\begin{pmatrix}
1 & 1 & 0 \\
-\alpha  & -\alpha & 0 \\
1 & \dfrac{-1+\beta + p_{33}}{p_{33}} & 0
\end{pmatrix}} $
\\

&&\\[-0.2cm]
\hline
\hline
&&\\[-0.2cm]

${
\begin{pmatrix}
1 & 1 & 0 \\
\alpha & \alpha & 0 \\
1 & \beta & 0
\end{pmatrix}}
$  &
${
 \begin{pmatrix}
1 & 0 & 0 \\
0 & 1 & 0 \\
1-\alpha p_{32}- p_{33} & p_{32}  &  p_{33}
\end{pmatrix}}$
&
${
\begin{pmatrix}
1 & 1 & 0 \\
\alpha  & \alpha & 0 \\
1 & \dfrac{-1+\beta + p_{33}}{p_{33}} & 0
\end{pmatrix}} $
\\

&&\\[-0.2cm]
\hline
\hline
&&\\[-0.2cm]

${
\begin{pmatrix}
1 & 1 & 0 \\
\alpha & \alpha & 0 \\
1 & \beta & 0
\end{pmatrix}}
$  &
${
 \begin{pmatrix}
0 & -\dfrac{1}{\alpha} & 0 \\
\dfrac{1}{\alpha} & 0 & 0 \\
\dfrac{\beta + \alpha (p_{32}- \alpha p_{33})}{\alpha ^2} & p_{32}  &  p_{33}
\end{pmatrix}}$
&
${
\begin{pmatrix}
1 & 1 & 0 \\
-\dfrac{1}{\alpha}  & -\dfrac{1}{\alpha}& 0 \\
1 & \dfrac{\alpha ^2 p_{33} +1 -\beta}{\alpha ^2 p_{33}} & 0
\end{pmatrix}} $
\\

&&\\[-0.2cm]
\hline
\hline
&&\\[-0.2cm]

${
\begin{pmatrix}
1 & 1 & 0 \\
\alpha & \alpha & 0 \\
1 & \beta & 0
\end{pmatrix}}
$  &
${
 \begin{pmatrix}
0 & \dfrac{1}{\alpha} & 0 \\
\dfrac{1}{\alpha} & 0 & 0 \\
\dfrac{\beta - \alpha (p_{32}- \alpha p_{33})}{\alpha ^2} & p_{32}  &  p_{33}
\end{pmatrix}}$
&
${
\begin{pmatrix}
1 & 1 & 0 \\
\dfrac{1}{\alpha}  & \dfrac{1}{\alpha}& 0 \\
1 & \dfrac{\alpha ^2 p_{33} +1 -\beta}{\alpha ^2 p_{33}} & 0
\end{pmatrix}} $
\\
&&\\
\hline
\end{tabular}
}

{\footnotesize {TABLE $17^\prime$.}}

\end{center}



We remark that in all the tables ``Table $m'$" the elements $p_{ij}$ have to satisfy the necessary conditions in order for $P_{B'B}$ to have rank 3.

\medskip
\noindent
{\bf Case ${\rm dim}(A^{2})=3$.}
\medskip

\noindent
In order to classify all the possible matrices corresponding to structure matrices of three-dimensional evolution algebras $A$ such that $A^2=A$ (equivalently
${\rm dim}(A^{2})=3$), we will use Proposition \ref{prop:numerodeceros}.
Notice that in this case the number of zeros in all the structure matrices of a given evolution algebra is invariant (see Proposition \ref{prop:numerodeceros} \eqref{numerodeceros1}). Equivalently, the number of non-zero entries is invariant. This is the reason because of which we will classify taking into account this last number. Note that the minimum number of non-zero entries in $M_B$ is exactly three.
\medskip

\noindent
{\bf Case 1.} $M_B$ has three non-zero elements.
\medskip

\noindent
We compute the determinant of $M_B$.
\begin{equation}
\label{determinanteNoNulo}\vert M_{B} \vert =\omega_{11}\omega_{22}\omega_{33}+ \omega_{12}\omega_{23}\omega_{31}+ \omega_{13}\omega_{21}\omega_{32}-\omega_{13}\omega_{22}\omega_{31}-\omega_{21}\omega_{12}\omega_{33}-\omega_{11}\omega_{32}\omega_{23}.
\end{equation}
Since $\vert M_B \vert \neq 0$,  only one of the six summands is non-zero. Assume, for example,
$\omega_{12}\omega_{23}\omega_{31}\neq 0$.
Take
  $\alpha=\frac{1}{\sqrt[7]{\omega_{12}\omega_{23}^2 \omega_{31}^4}}$, $\beta=\alpha^4 \omega_{23}\omega_{31}^2$ and $\gamma=\alpha^2 \omega_{31}$. Then
$$
(\alpha,\beta,\gamma) \cdot
\begin{pmatrix}
0 & \omega_{12} & 0 \\
0  & 0 & \omega_{23} \\
\omega_{31} & 0 & 0
\end{pmatrix}=
\begin{pmatrix}
0 & 1 & 0 \\
0  & 0 & 1 \\
1 & 0 & 0
\end{pmatrix}
$$
\par
Reasoning in this way with   $\omega_{1\sigma(1)}\omega_{2\sigma(2)}\omega_{3\sigma(3)}$ (where $\sigma \in S_3$) instead of with $\omega_{12}\omega_{23}\omega_{31}$, we obtain a natural basis $B'$ such that $M_{B'}= (\varpi_{ij})$, with $\varpi_{i\sigma(i)}=1$ and $\varpi_{ij}=0$ for any $j\neq \sigma(i)$.
 This justifies that these are the only matrices we consider in order to get the classification. Notice that there are are only six. Since we do not know which of them are in the same orbit (considering the action described in Section \ref{Action}), we start with one of them, say $M$, and consider $\tau \cdot  M$ for any $\tau\in S_3$.
 We display $\{\tau \cdot M \ \vert \  \tau \in S_3\}$ in a row. Then, we start with another of these matrices, say $M'$, not appearing in this row, and display $\{\tau \cdot M' \ \vert \  \tau \in S_3\}$ in a second row. We continue in this way until we get the six different matrices. We color these six matrices to make easier the reader to find them.

\begin{center}
\scalebox{0.7}{
\begin{tabular}{|c||c||c||c||c||c|}

\hline
&&&&&\\
&(1,2)&(1,3)&(2,3)&(1,2,3)&(1,3,2)\\
&&&&&\\[-0.2cm]
\hline
\hline
&&&&&\\[-0.2cm]

$\color{ao(english)}{
\begin{pmatrix}
1 & 0 & 0 \\
0 & 1 & 0 \\
0  & 0 & 1
\end{pmatrix}}
$  &

$ \begin{pmatrix}
1 & 0 & 0 \\
0 & 1 & 0 \\
0  & 0 & 1
\end{pmatrix}$
&
$\begin{pmatrix}
1 & 0 & 0 \\
0 & 1 & 0 \\
0  & 0 & 1
\end{pmatrix} $
 &

 $\begin{pmatrix}
1 & 0 & 0 \\
0 & 1 & 0 \\
0  & 0 & 1
\end{pmatrix}$
&
$
\begin{pmatrix}
1 & 0 & 0 \\
0 & 1 & 0 \\
0  & 0 & 1
\end{pmatrix}$
&
$
\begin{pmatrix}
1 & 0 & 0 \\
0 & 1 & 0 \\
0  & 0 & 1
\end{pmatrix}
$ \\
&&&&&\\[-0.2cm]
\hline
\hline
&&&&&\\[-0.2cm]
$\color{ao(english)}{
\begin{pmatrix}
0 & 1 & 0 \\
1 & 0 & 0 \\
0  & 0 & 1

\end{pmatrix}}
$  &

$ \begin{pmatrix}
0 & 1 & 0 \\
1 & 0 & 0 \\
0  & 0 & 1

\end{pmatrix}$
&
$\color{ao(english)}{
\begin{pmatrix}
1 & 0 & 0 \\
0 & 0 & 1 \\
0  & 1 & 0

\end{pmatrix}} $
 &

 $\color{ao(english)}{
 \begin{pmatrix}
0 & 0 & 1 \\
0 & 1 & 0 \\
1  & 0 & 0
\end{pmatrix}}$
&
$
\begin{pmatrix}
1 & 0 & 0 \\
0 & 0 & 1 \\
0  & 1 & 0
\end{pmatrix}$
&
$
\begin{pmatrix}
0 & 0 & 1 \\
0 & 1 & 0 \\
1  & 0 & 0
\end{pmatrix}
$
 \\
&&&&&\\[-0.2cm]
\hline
\hline
&&&&&\\[-0.2cm]
$\color{ao(english)}{
\begin{pmatrix}
0 & 0 & 1 \\
1 & 0 & 0 \\
0  & 1 & 0

\end{pmatrix}}
$  &

$\color{ao(english)}{
 \begin{pmatrix}
0 & 1 & 0 \\
0 & 0 & 1 \\
1  & 0 & 0

\end{pmatrix}}$
&
$\begin{pmatrix}
0 & 1 & 0 \\
0 & 0 & 1 \\
1  & 0 & 0

\end{pmatrix} $
 &

 $\begin{pmatrix}
0 & 1 & 0 \\
0 & 0 & 1 \\
1  & 0 & 0
\end{pmatrix}$
&
$
\begin{pmatrix}
0 & 0 & 1 \\
1 & 0 & 0 \\
0  & 1 & 0
\end{pmatrix}
$
&
$
\begin{pmatrix}
0 & 0 & 1 \\
1 & 0 & 0 \\
0  & 1 & 0
\end{pmatrix}
$
 \\
&&&&&\\
\hline
\end{tabular}}

{\footnotesize {TABLE 18.  ${\rm dim}(A^2)=3$; three non-zero entries.}} \label{tabla15}

\end{center}
\newpage

Therefore, there are only three orbits and, consequently, only three non-isomorphic evolution algebras $A$ in the case we are studying. Their structure matrices are:

\begin{equation}\label{treselem}
\begin{pmatrix}
1 & 0 & 0 \\
0 & 1 & 0 \\
0  & 0 & 1
\end{pmatrix}, \;
\begin{pmatrix}
0 & 1 & 0 \\
1 & 0 & 0 \\
0  & 0 & 1
\end{pmatrix} \; \rm{and} \;
\begin{pmatrix}
0 & 1 & 0 \\
0 & 0 & 1 \\
1  & 0  & 0
\end{pmatrix}.
\end{equation}

\medskip

\noindent
{\bf Case 2.} $M_B$ has four non-zero elements.

\noindent
Reasoning as in Case 1, we arrive at a natural basis $B'$ of the evolution algebra $A$ such that $M_{B'}=(\varpi_{ij})$, with $\varpi_{i\sigma(i)}=1$, $\varpi_{ij}\neq 0$ for some $j\neq \sigma(i)$ and  $\varpi_{ik}=0$ for every $k\neq \sigma(i), j$ for every permutation $\sigma \in S_3$.

In order to describe the matrices producing non-isomorphic evolution algebras, first, we notice the following. Given a matrix as explained below, no matter where we put the four non-zero elements (three 1 and one arbitrary parameter $\mu$ which has to be non-zero) that the resulting matrices correspond to isomorphic evolution algebras. This is because we will not be worried about where to place the parameter.
Then we  explain which are the possible cases.

We have to put five 0 into nine places (the nine entries of the matrix). This can be done in
$\binom{9}{5}= 126$ ways. But we must remove the cases in which $\vert M_{B'}\vert=0$. This happens:
\begin{enumerate}[\rm (a)]
\item When the entries of a row are zero.
\item When the entries of a column are zero but there is no a row which consists of zeros.
\item When the matrix has a $2 \times 2$ minor with every entry equals zero and it has not a row or a column of zeros.
\end{enumerate}
These three cases are mutually exclusive.

(a) The cases in which there is a column of zeros are $3 \binom{6}{2}= 45$ (3 corresponds to the three columns and $\binom{6}{2}$ corresponds to the different ways in which two zeros can be distributed in the six remaining places).

(b) For the rows the reasoning in similar: we have 45 cases. Now we have to take into account that there are cases which have been considered twice (just when there is a row and a column which are zero). This happens 9 times. Therefore, we have 45-9=36 options in this case.

(c) Once the matrix has a $2 \times 2$ minor with every entry equals zero, the fifth zero must be only in one place if we want to avoid the matrix having a row or column of zeros.  There are 9 options to put a zero in a matrix. Once this happens, we remove the corresponding row and the corresponding column and there are four places where to put four zeros. Hence, there are 9 possibilities in this case.

Taking into account (a), (b) and (c), there are $126-(45+36+9)= 36$ different matrices we can consider.

As in  Case 1, we list all the options in a table. The elements that appear in every row correspond to the action of every element of $S_3$ on the matrix placed first. There are six  mutually non-isomorphic parametric families of evolution algebras, which are listed below.

\begin{center}
\scalebox{0.7}{
\begin{tabular}{|c||c||c||c||c||c|}

\hline
&&&&&\\
&(1,2)&(1,3)&(2,3)&(1,2,3)&(1,3,2)\\
&&&&&\\[-0.2cm]
\hline
\hline
&&&&&\\[-0.2cm]
$\color{ao(english)}{
\begin{pmatrix}
1 & \mu & 0 \\
0 & 1 & 0 \\
0  & 0 & 1
\end{pmatrix}}
$  &

$\color{ao(english)}{
 \begin{pmatrix}
1 & 0 & 0 \\
\mu & 1 & 0 \\
0  & 0 & 1

\end{pmatrix}}$
&
$\color{ao(english)}{
\begin{pmatrix}
1 & 0 & 0 \\
0 & 1 & 0 \\
0  & \mu & 1
\end{pmatrix} }$
 &

$\color{ao(english)}{
\begin{pmatrix}
1 & 0 & \mu \\
0 & 1 & 0 \\
0  & 0 & 1
\end{pmatrix}}$
&
$\color{ao(english)}{
\begin{pmatrix}
1 & 0 & 0 \\
0 & 1 & \mu \\
0  & 0 & 1
\end{pmatrix}}$
&
$\color{ao(english)}{
\begin{pmatrix}
1 & 0 & 0 \\
0 & 1 & 0 \\
\mu  & 0 & 1
\end{pmatrix}}
$
\\
&&&&&\\[-0.2cm]
\hline
\hline
&&&&&\\[-0.2cm]
$\color{ao(english)}{
\begin{pmatrix}
\mu & 1 & 0 \\
1 & 0 & 0 \\
0  & 0 & 1

\end{pmatrix}}
$  &

$\color{ao(english)}{
 \begin{pmatrix}
0 & 1 & 0 \\
1 & \mu & 0 \\
0  & 0 & 1

\end{pmatrix}}$
&
$\color{ao(english)}{
\begin{pmatrix}
1 & 0 & 0 \\
0 & 0 & 1 \\
0  & 1 & \mu

\end{pmatrix} }$
 &

$\color{ao(english)}{
\begin{pmatrix}
\mu & 0 & 1 \\
0 & 1 & 0 \\
1  & 0 & 0
\end{pmatrix}}$
&
$\color{ao(english)}{
\begin{pmatrix}
1 & 0 & 0 \\
0 & \mu & 1 \\
0  & 1 & 0
\end{pmatrix}}$
&
$\color{ao(english)}{
\begin{pmatrix}
0 & 0 & 1 \\
0 & 1 & 0 \\
1  & 0 & \mu
\end{pmatrix}}
$ \\
&&&&&\\[-0.2cm]
\hline
\hline
&&&&&\\[-0.2cm]

$\color{ao(english)}{
\begin{pmatrix}
0 & 1 & \mu  \\
1 & 0 & 0 \\
0  & 0 & 1
\end{pmatrix}}
$  &

$\color{ao(english)}{
\begin{pmatrix}
0 & 1 & 0 \\
1 & 0 & \mu \\
0  & 0 & 1
\end{pmatrix}}
$

&

$\color{ao(english)}{
\begin{pmatrix}
1 & 0 & 0 \\
0 & 0 & 1 \\
\mu  & 1 & 0
\end{pmatrix} }$
 &

$\color{ao(english)}{
\begin{pmatrix}
0 & \mu & 1 \\
0 & 1 & 0 \\
1  & 0 & 0
\end{pmatrix}}$
&
$\color{ao(english)}{
\begin{pmatrix}
1 & 0 & 0 \\
\mu & 0 & 1 \\
0  & 1 & 0
\end{pmatrix}}
$
&
$\color{ao(english)}{
\begin{pmatrix}
0 & 0 & 1 \\
0 & 1 & 0 \\
1 & \mu  & 0
\end{pmatrix}}$ \\
&&&&&\\[-0.2cm]
\hline
\hline
&&&&&\\[-0.2cm]

$\color{ao(english)}{
\begin{pmatrix}
0 & 1 & 0 \\
1 & 0 & 0 \\
\mu  & 0 & 1
\end{pmatrix}}
$  &

$\color{ao(english)}{
\begin{pmatrix}
0 & 1 & 0 \\
1 & 0 & 0 \\
0  & \mu & 1
\end{pmatrix}}$
&
$\color{ao(english)}{
\begin{pmatrix}
1 & 0 & \mu \\
0 & 0 & 1 \\
0  & 1 & 0
\end{pmatrix}}$
 &

$\color{ao(english)}{
\begin{pmatrix}
0 & 0 & 1 \\
\mu & 1 & 0 \\
1  & 0 & 0
\end{pmatrix}}$
&
$\color{ao(english)}{
\begin{pmatrix}
1 & \mu & 0 \\
0 & 0 & 1 \\
0  & 1 & 0
\end{pmatrix}}
$
&
$\color{ao(english)}{
\begin{pmatrix}
0 & 0 & 1 \\
0 & 1 & \mu \\
1  & 0 & 0
\end{pmatrix}}
$

\\
&&&&&\\[-0.2cm]
\hline
\hline
&&&&&\\[-0.2cm]

$\color{ao(english)}{
\begin{pmatrix}
\mu & 0 & 1 \\
1 & 0 & 0 \\
0  & 1 & 0

\end{pmatrix}}
$  &

$\color{ao(english)}{
 \begin{pmatrix}
0 & 1 & 0 \\
0 & \mu & 1 \\
1  & 0 & 0

\end{pmatrix}}$
&
$\color{ao(english)}{
\begin{pmatrix}
0 & 1 & 0 \\
0 & 0 & 1 \\
1  & 0 & \mu

\end{pmatrix} }$
 &

$\color{ao(english)}{
\begin{pmatrix}
\mu & 1 & 0 \\
0 & 0 & 1 \\
1  & 0 & 0
\end{pmatrix}}$
&
$\color{ao(english)}{
\begin{pmatrix}
0 & 0 & 1 \\
1 & \mu & 0 \\
0  & 1 & 0
\end{pmatrix}}$
&
$\color{ao(english)}{
\begin{pmatrix}
0 & 0 & 1 \\
1 & 0 & 0 \\
0  & 1 & \mu
\end{pmatrix}}
$ \\
&&&&&\\[-0.2cm]
\hline
\hline
&&&&&\\[-0.2cm]

$\color{ao(english)}{
\begin{pmatrix}
0 & \mu & 1 \\
1 & 0 & 0 \\
0  & 1 & 0

\end{pmatrix}}
$  &

$\color{ao(english)}{
 \begin{pmatrix}
0 & 1 & 0 \\
\mu & 0 & 1 \\
1  & 0 & 0

\end{pmatrix}}$
&
$\color{ao(english)}{
\begin{pmatrix}
0 & 1 & 0 \\
0 & 0 & 1 \\
1  & \mu & 0
\end{pmatrix}}$
 &

$\color{ao(english)}{
\begin{pmatrix}
0 & 1 & \mu \\
0 & 0 & 1 \\
1  & 0 & 0
\end{pmatrix}}$
&
$\color{ao(english)}{
\begin{pmatrix}
0 & 0 & 1 \\
1 & 0 & \mu \\
0  & 1 & 0
\end{pmatrix}}
$
&
$\color{ao(english)}{
\begin{pmatrix}
0 & 0 & 1 \\
1 & 0 & 0 \\
\mu  & 1 & 0
\end{pmatrix}}
$
\\
&&&&&\\
\hline
\end{tabular}}

{\footnotesize {TABLE 19.  ${\rm dim}(A^2)=3$; four non-zero entries.}}

\end{center}

\begin{center}

\scalebox{0.7}{

\begin{tabular}{|c||c||c|}

\hline
&&\\
$M_{B}$& $P_{B'B}$ & $M_{B'}$\\
&&\\[-0.2cm]
\hline
\hline
&&\\[-0.2cm]

${
\begin{pmatrix}
\mu & 0 & 1 \\
1 & 0 & 0 \\
0 & 1 & 0
\end{pmatrix}}
$  &
${
 \begin{pmatrix}
-{\phi} & 0 & 0 \\
0 & {\phi}^2 & 0 \\
0 & 0 & {\phi}^4
\end{pmatrix}}$
&
${
\begin{pmatrix}
-{\phi} \mu & 0 & 1 \\
1 & 0 & 0 \\
0 & 1 & 0
\end{pmatrix}} $
\\

&&\\[-0.2cm]
\hline
\hline
&&\\[-0.2cm]
${
\begin{pmatrix}
\mu & 0 & 1 \\
1 & 0 & 0 \\
0 & 1 & 0
\end{pmatrix}}
$  &
${
 \begin{pmatrix}
{\phi}^2 & 0 & 0 \\
0 & {\phi}^4 & 0 \\
0 & 0 & -{\phi}
\end{pmatrix}}$
&
${
\begin{pmatrix}
{\phi}^2 \mu & 0 & 1 \\
1 & 0 & 0 \\
0 & 1 & 0
\end{pmatrix}} $
\\
&&\\[-0.2cm]
\hline
\hline
&&\\[-0.2cm]

${
\begin{pmatrix}
\mu & 0 & 1 \\
1 & 0 & 0 \\
0 & 1 & 0
\end{pmatrix}}
$  &
${
 \begin{pmatrix}
-{\phi}^3 & 0 & 0 \\
0 & {\phi}^6 & 0 \\
0 & 0 & -{\phi}^5
\end{pmatrix}}$
&
${
\begin{pmatrix}
-{\phi}^3 \mu & 0 & 1 \\
1 & 0 & 0 \\
0 & 1 & 0
\end{pmatrix}} $
\\
&&\\[-0.2cm]
\hline
\hline
&&\\[-0.2cm]

${
\begin{pmatrix}
\mu & 0 & 1 \\
1 & 0 & 0 \\
0 & 1 & 0
\end{pmatrix}}
$  &
${
 \begin{pmatrix}
{\phi}^4 & 0 & 0 \\
0 & -{\phi} & 0 \\
0 & 0 & {\phi}^2
\end{pmatrix}}$
&
${
\begin{pmatrix}
{\phi}^4 \mu & 0 & 1 \\
1 & 0 & 0 \\
0 & 1 & 0
\end{pmatrix}} $
\\

&&\\[-0.2cm]
\hline
\hline
&&\\[-0.2cm]

${
\begin{pmatrix}
\mu & 0 & 1 \\
1 & 0 & 0 \\
0 & 1 & 0
\end{pmatrix}}
$  &
${
 \begin{pmatrix}
-{\phi}^5 & 0 & 0 \\
0 & -{\phi}^3 & 0 \\
0 & 0 & {\phi}^6
\end{pmatrix}}$
&
${
\begin{pmatrix}
-{\phi}^5 \mu & 0 & 1 \\
1 & 0 & 0 \\
0 & 1 & 0
\end{pmatrix}} $
\\

&&\\[-0.2cm]
\hline
\hline
&&\\[-0.2cm]

${
\begin{pmatrix}
\mu & 0 & 1 \\
1 & 0 & 0 \\
0 & 1 & 0
\end{pmatrix}}
$  &
${
 \begin{pmatrix}
{\phi}^6 & 0 & 0 \\
0 & -{\phi}^5 & 0 \\
0 & 0 & -{\phi}^3
\end{pmatrix}}$
&
${
\begin{pmatrix}
{\phi}^6 \mu & 0 & 1 \\
1 & 0 & 0 \\
0 & 1 & 0
\end{pmatrix}} $
\\
&&\\[-0.2cm]
\hline
\hline
&&\\[-0.2cm]

${
\begin{pmatrix}
0 & \mu & 1 \\
1 & 0 & 0 \\
0 & 1 & 0
\end{pmatrix}}
$  &
${
 \begin{pmatrix}
-{\phi} & 0 & 0 \\
0 & {\phi}^2 & 0 \\
0 & 0 & {\phi}^4
\end{pmatrix}}$
&
${
\begin{pmatrix}
0 & -{\phi}^3 \mu & 1 \\
1 & 0 & 0 \\
0 & 1 & 0
\end{pmatrix}} $
\\

&&\\[-0.2cm]
\hline
\hline
&&\\[-0.2cm]
${
\begin{pmatrix}
0 & \mu & 1 \\
1 & 0 & 0 \\
0 & 1 & 0
\end{pmatrix}}
$  &
${
 \begin{pmatrix}
{\phi}^2 & 0 & 0 \\
0 & {\phi}^4 & 0 \\
0 & 0 & -{\phi}
\end{pmatrix}}$
&
${
\begin{pmatrix}
0 & {\phi}^6 \mu  & 1 \\
1 & 0 & 0 \\
0 & 1 & 0
\end{pmatrix}} $
\\
&&\\[-0.2cm]
\hline
\hline
&&\\[-0.2cm]

${
\begin{pmatrix}
0 & \mu  & 1 \\
1 & 0 & 0 \\
0 & 1 & 0
\end{pmatrix}}
$  &
${
 \begin{pmatrix}
-{\phi}^3 & 0 & 0 \\
0 & {\phi}^6 & 0 \\
0 & 0 & -{\phi}^5
\end{pmatrix}}$
&
${
\begin{pmatrix}
0 {\phi}^2 \mu & 1 \\
1 & 0 & 0 \\
0 & 1 & 0
\end{pmatrix}} $
\\
&&\\[-0.2cm]
\hline
\hline
&&\\[-0.2cm]

${
\begin{pmatrix}
0 & \mu  & 1 \\
1 & 0 & 0 \\
0 & 1 & 0
\end{pmatrix}}
$  &
${
 \begin{pmatrix}
{\phi}^4 & 0 & 0 \\
0 & -{\phi} & 0 \\
0 & 0 & {\phi}^2
\end{pmatrix}}$
&
${
\begin{pmatrix}
0 & -{\phi}^5 \mu  & 1 \\
1 & 0 & 0 \\
0 & 1 & 0
\end{pmatrix}} $
\\

&&\\[-0.2cm]
\hline
\hline
&&\\[-0.2cm]

${
\begin{pmatrix}
0 & \mu  & 1 \\
1 & 0 & 0 \\
0 & 1 & 0
\end{pmatrix}}
$  &
${
 \begin{pmatrix}
-{\phi}^5 & 0 & 0 \\
0 & -{\phi}^3 & 0 \\
0 & 0 & {\phi}^6
\end{pmatrix}}$
&
${
\begin{pmatrix}
0 & -{\phi} \mu & 1 \\
1 & 0 & 0 \\
0 & 1 & 0
\end{pmatrix}} $
\\

&&\\[-0.2cm]
\hline
\hline
&&\\[-0.2cm]

${
\begin{pmatrix}
0 & \mu & 1 \\
1 & 0 & 0 \\
0 & 1 & 0
\end{pmatrix}}
$  &
${
 \begin{pmatrix}
{\phi}^6 & 0 & 0 \\
0 & -{\phi}^5 & 0 \\
0 & 0 & -{\phi}^3
\end{pmatrix}}$
&
${
\begin{pmatrix}
0 & {\phi}^4 \mu & 1 \\
1 & 0 & 0 \\
0 & 1 & 0
\end{pmatrix}} $
\\
&&\\
\hline
\end{tabular}
}

\end{center}

\begin{center}

\scalebox{0.7}{

\begin{tabular}{|c||c||c|}

\hline
&&\\
$M_{B}$& $P_{B'B}$ & $M_{B'}$\\
&&\\[-0.2cm]
\hline
\hline
&&\\[-0.2cm]

${
\begin{pmatrix}
\mu & 1 & 0 \\
1 & 0 & 0 \\
0 & 0 & 1
\end{pmatrix}}
$  &
${
 \begin{pmatrix}
-\dfrac{1}{2}(1+\sqrt{-3}) & 0 & 0 \\
0 & \dfrac{1}{2}(-1+\sqrt{-3}) & 0 \\
0 & 0 & 1
\end{pmatrix}}$
&
${
\begin{pmatrix}
-\dfrac{1}{2}(1+\sqrt{-3}) \mu & 1 & 0 \\
1 & 0 & 0 \\
0 & 0 & 1
\end{pmatrix}} $
\\

&&\\[-0.2cm]
\hline
\hline
&&\\[-0.2cm]
${
\begin{pmatrix}
\mu & 1 & 0 \\
1 & 0 & 0 \\
0 & 0 & 1
\end{pmatrix}}
$  &
${
 \begin{pmatrix}
\dfrac{1}{2}(-1+\sqrt{-3}) & 0 & 0 \\
0 & -\dfrac{1}{2}(1+\sqrt{-3}) & 0 \\
0 & 0 & 1
\end{pmatrix}}$
&
${
\begin{pmatrix}
\dfrac{1}{2}(-1+\sqrt{-3}) \mu & 1 & 0 \\
1 & 0 & 0 \\
0 & 0 & 1
\end{pmatrix}} $
\\
&&\\[-0.2cm]
\hline
\hline
&&\\[-0.2cm]

${
\begin{pmatrix}
0 & 1 & \mu  \\
1 & 0 & 0 \\
0 & 0 & 1
\end{pmatrix}}
$  &
${
 \begin{pmatrix}
-\dfrac{1}{2}(1+\sqrt{-3}) & 0 & 0 \\
0 & \dfrac{1}{2}(-1+\sqrt{-3}) & 0 \\
0 & 0 & 1
\end{pmatrix}}$
&
${
\begin{pmatrix}
0 & 1 & \dfrac{1}{2}(-1+\sqrt{-3}) \mu  \\
1 & 0 & 0 \\
0 & 0 & 1
\end{pmatrix}} $
\\

&&\\[-0.2cm]
\hline
\hline
&&\\[-0.2cm]
${
\begin{pmatrix}
0 & 1 &\mu  \\
1 & 0 & 0 \\
0 & 0 & 1
\end{pmatrix}}
$  &
${
 \begin{pmatrix}
\dfrac{1}{2}(-1+\sqrt{-3}) & 0 & 0 \\
0 & -\dfrac{1}{2}(1+\sqrt{-3}) & 0 \\
0 & 0 & 1
\end{pmatrix}}$
&
${
\begin{pmatrix}
0 & 1 & -\dfrac{1}{2}(1+\sqrt{-3}) \mu  \\
1 & 0 & 0 \\
0 & 0 & 1
\end{pmatrix}} $
\\
&&\\[-0.2cm]
\hline
\hline
&&\\[-0.2cm]
${
\begin{pmatrix}
0 & 1 & 0 \\
1 & 0 & 0 \\
\mu & 0 & 1
\end{pmatrix}}
$  &
${
 \begin{pmatrix}
-\dfrac{1}{2}(1+\sqrt{-3}) & 0 & 0 \\
0 & \dfrac{1}{2}(-1+\sqrt{-3}) & 0 \\
0 & 0 & 1
\end{pmatrix}}$
&
${
\begin{pmatrix}
0 & 1 & 0  \\
1 & 0 & 0 \\
\dfrac{1}{2}(-1+\sqrt{-3}) \mu & 0 & 1
\end{pmatrix}} $
\\
&&\\[-0.2cm]
\hline
\hline
&&\\[-0.2cm]
${
\begin{pmatrix}
0 & 1 & 0 \\
1 & 0 & 0 \\
\mu  & 0 & 1
\end{pmatrix}}
$  &
${
 \begin{pmatrix}
\dfrac{1}{2}(-1+\sqrt{-3}) & 0 & 0 \\
0 & -\dfrac{1}{2}(1+\sqrt{-3}) & 0 \\
0 & 0 & 1
\end{pmatrix}}$
&
${
\begin{pmatrix}
0 & 1 & 0 \\
1 & 0 & 0 \\
-\dfrac{1}{2}(1+\sqrt{-3}) \mu  & 0 & 1
\end{pmatrix}} $
\\
&&\\
\hline
\end{tabular}
}

{\footnotesize {TABLE $19^\prime$.}}

\end{center}


 \medskip

\noindent
{\bf Case 3.} $M_B$ has five non-zero elements.
\medskip

\noindent
We proceed as in the cases above and obtain that in order to classify we need to consider only matrices with four zero entries and five non-zero entries. By changing the basis, we may assume that three of the elements are 1 and the other are arbitrary parameters $\lambda$ and $\mu$, with the only restriction of being non-zero and such that $\lambda\mu\neq 1$ (this condition is needed because the determinant must be non-zero).

The different matrices to be considered are those for which we place four zeros: $\binom{9}{4}= 126$. On the one hand, we must remove those for which there is a row or a column which are zero (because these matrices have zero determinant). If one row or column consists of zeros, then the fourth zero can be placed in six different positions. Since there are 3 rows and 3 columns, this happens 6 times. On the other hand, we must remove those for which there is a $2 \times 2$ minor with every entry equals zero. Consequently, we have  $126-6^2-9 = 81$ cases that we display in the table that follows. The number of mutually non-isomorphic parametric families of evolution algebras is sixteen. We show them in two tables.

\begin{center}
\scalebox{0.7}{

}

{\footnotesize {TABLE $20^\prime$.}}
\end{center}
\newpage

\noindent
{\bf Case 4.} $M_B$ has six non-zero elements.

Once again we reason in the same way and we can fix our attention in those matrices with three zeros and six non-zero entries.

The different possibilities are: $\binom{9}{3}-6= 78$. Note that $\binom{9}{3}$ are the different ways of placing 3 zeros in a $3 \times 3$ matrix while $6$ corresponds to the cases in which there is a row or a column which is zero.

Making changes on the elements of the basis we may consider three entries equals 1. The only restrictions on the other three elements, say $\lambda, \mu$ and $\rho$, which must be non-zero, are the needed ones in order to not have zero determinant. This means $\mu\rho \neq 1$, $\lambda\rho \neq 1$, $\mu \lambda \neq 1$ and $\mu\rho\lambda \neq -1$.
\medskip

There are fifteen mutually non-isomorphic parametric families of  evolution algebras, which are listed in the table follows.
\bigskip

\begin{center}

\scalebox{0.70}{


}

{\footnotesize {TABLE $21^\prime$.}}

\end{center}

\noindent
{\bf Case 5.} $M_B$ has seven non-zero elements.
\medskip

\noindent
The different cases that we must consider are $\binom{9}{2}= 36$. Every matrix has three entries which are 1 and four non-zero parameters $\delta, \lambda, \mu, \rho$, which must satisfy one of the following conditions, depending on the case we are considering, in order for the matrix to not have zero determinant: $\mu\rho \neq 1$; $\mu\rho+\delta\lambda \neq 1$; $\delta\mu \neq 1$; $\delta\mu + \lambda \rho \neq 1$; $\delta\lambda \neq 1$; $\delta\rho - \delta\lambda\mu \neq 1$; $\delta\rho \neq 1$; $\mu\rho-\delta\lambda\rho \neq 1$.

The number  of mutually non-isomorphic parametric families of evolution algebras is eight, which are listed below.
\bigskip

\newpage

\begin{center}

\scalebox{0.70}{

\begin{tabular}{|c||c||c||c||c||c|}
\hline
&&&&&\\
&(1,2)&(1,3)&(2,3)&(1,2,3)&(1,3,2)\\
&&&&&\\
\hline
\hline
&&&&&\\[-0.2cm]

$\color{ao(english)}{
\begin{pmatrix}
1 & \mu & \lambda \\
\rho & 1 & \delta \\
0  & 0 & 1

\end{pmatrix}}
$  &

$ \begin{pmatrix}
1 & \rho & \delta \\
\mu & 1 & \lambda \\
0  & 0 & 1

\end{pmatrix}$
&
$\color{ao(english)}{
\begin{pmatrix}
1 & 0 & 0 \\
\delta & 1 & \rho \\
\lambda  & \mu & 1

\end{pmatrix} }$
 &

$\color{ao(english)}{
\begin{pmatrix}
1 & \lambda & \mu \\
0 & 1 & 0 \\
\rho  & \delta & 1
\end{pmatrix}}$
&
$
\begin{pmatrix}
1 & 0 & 0 \\
\lambda & 1 & \mu  \\
\delta  & \rho & 1
\end{pmatrix}$
&
$
\begin{pmatrix}
1 & \delta & \rho \\
0 & 1 & 0 \\
\mu  & \lambda & 1
\end{pmatrix}
$ \\
&&&&&\\[-0.2cm]
\hline
\hline
&&&&&\\[-0.2cm]

$\color{ao(english)}{
\begin{pmatrix}
1 & \mu & \lambda  \\
\rho & 1 & 0\\
\delta  & 0 & 1

\end{pmatrix}}
$  &

$\color{ao(english)}{
 \begin{pmatrix}
1 & \rho & 0  \\
\mu & 1 & \lambda \\
0  & \delta & 1

\end{pmatrix}}$
&
$\color{ao(english)}{
\begin{pmatrix}
1 & 0 & \delta \\
0 & 1 & \rho\\
\lambda  & \mu & 1

\end{pmatrix} }$
 &

 $\begin{pmatrix}
1 & \lambda & \mu \\
\delta & 1 & 0 \\
\rho   & 0 & 1
\end{pmatrix}$
&
$
\begin{pmatrix}
1 & \delta & 0 \\
\lambda & 1 & \mu \\
0  & \rho & 1
\end{pmatrix}
$
&
$
\begin{pmatrix}
1 & 0 & \rho \\
0 & 1 & \delta \\
\mu & \lambda  & 1
\end{pmatrix}
$ \\
&&&&&\\[-0.2cm]
\hline
\hline
&&&&&\\[-0.2cm]

$\color{ao(english)}{
\begin{pmatrix}
1 & \mu & \lambda \\
\rho & 1 & 0 \\
0 & \delta & 1

\end{pmatrix}}
$  &

$\color{ao(english)}{
 \begin{pmatrix}
1 & \rho & 0 \\
\mu & 1 & \lambda \\
\delta  & 0 & 1

\end{pmatrix}}$
&
$\color{ao(english)}{
\begin{pmatrix}
1 & \delta & 0 \\
0 & 1 & \rho \\
\lambda  & \mu & 1

\end{pmatrix} }$
 &

$\color{ao(english)}{
\begin{pmatrix}
1 & \lambda & \mu \\
0 & 1 & \delta \\
\rho  & 0 & 1
\end{pmatrix}}$
&
$\color{ao(english)}{
\begin{pmatrix}
1 & 0 & \delta \\
\lambda & 1 & \mu \\
0  & \rho & 1
\end{pmatrix}}
$
&
$\color{ao(english)}{
\begin{pmatrix}
1 & 0 & \rho \\
\delta & 1 & 0 \\
\mu  & \lambda & 1
\end{pmatrix}}
$ \\
&&&&&\\[-0.2cm]
\hline
\hline
&&&&&\\[-0.2cm]

$\color{ao(english)}{
\begin{pmatrix}
1 & 0 & \mu \\
\lambda & 1 & \rho \\
\delta  & 0 & 1

\end{pmatrix}}
$  &

$\color{ao(english)}{
 \begin{pmatrix}
1 & \lambda & \rho \\
0 & 1 & \mu \\
0  & \delta & 1

\end{pmatrix}}$
&
$\begin{pmatrix}
1 & 0 & \delta \\
\rho & 1 & \lambda \\
\mu  & 0 & 1

\end{pmatrix} $
 &

 $\color{ao(english)}{
 \begin{pmatrix}
1 & \mu & 0 \\
\delta & 1 & 0 \\
\lambda  & \rho & 1
\end{pmatrix}}$
&
$
\begin{pmatrix}
1 & \delta & 0 \\
\mu & 1 & 0 \\
\rho  & \lambda & 1
\end{pmatrix}
$
&
$
\begin{pmatrix}
1 & \rho & \lambda \\
0 & 1 & \delta \\
0  & \mu & 1
\end{pmatrix}
$ \\

&&&&&\\[-0.2cm]
\hline
\hline
&&&&&\\[-0.2cm]

$\color{ao(english)}{
\begin{pmatrix}
 \mu & 1 & \lambda \\
1 & 0 & 0 \\
\rho  & \delta & 1

\end{pmatrix}}
$  &

$\color{ao(english)}{
 \begin{pmatrix}
0 & 1 & 0 \\
1 & \mu & \lambda \\
\delta  & \rho & 1

\end{pmatrix}}$
&
$\color{ao(english)}{
\begin{pmatrix}
1 & \delta & \rho \\
0 & 0 & 1 \\
\lambda  & 1 & \mu

\end{pmatrix} }$
 &

$\color{ao(english)}{
\begin{pmatrix}
\mu & \lambda & 1 \\
 \rho & 1 & \delta \\
1  & 0  & 0
\end{pmatrix}}$
&
$\color{ao(english)}{
\begin{pmatrix}
1 & \rho & \delta \\
\lambda & \mu & 1 \\
0  & 1 & 0
\end{pmatrix}}
$
&
$\color{ao(english)}{
\begin{pmatrix}
0 & 0 & 1 \\
\delta & 1 & \rho \\
1  & \lambda & \mu
\end{pmatrix}}
$ \\
&&&&&\\[-0.2cm]
\hline
\hline
&&&&&\\[-0.2cm]

$\color{ao(english)}{
\begin{pmatrix}
\mu & 1 &  \lambda \\
1 & 0 & \rho \\
\delta  & 0 & 1

\end{pmatrix}}
$  &

$\color{ao(english)}{
 \begin{pmatrix}
0 & 1 & \rho \\
1 & \mu & \lambda \\
0  & \delta & 1

\end{pmatrix}}$
&
$\color{ao(english)}{
\begin{pmatrix}
1 & 0 & \delta \\
\rho & 0 & 1 \\
\lambda  & 1 & \mu

\end{pmatrix} }$
 &

$\color{ao(english)}{
\begin{pmatrix}
\mu & \lambda & 1 \\
\delta & 1 & 0 \\
1  & \rho & 0
\end{pmatrix}}$
&
$\color{ao(english)}{
\begin{pmatrix}
1 & \delta & 0 \\
\lambda & \mu & 1 \\
\rho  & 1 & 0
\end{pmatrix}}
$
&
$\color{ao(english)}{
\begin{pmatrix}
0 & \rho & 1 \\
0 & 1 & \delta \\
1  & \lambda & \mu
\end{pmatrix}}
$ \\
&&&&&\\[-0.2cm]
\hline
\hline
&&&&&\\[-0.2cm]

$\color{ao(english)}{
\begin{pmatrix}
0 & 1 & \mu \\
1 & \lambda & \rho \\
\delta  & 0 & 1

\end{pmatrix}}
$  &

$\color{ao(english)}{
 \begin{pmatrix}
\lambda & 1 & \rho \\
1 & 0 & \mu \\
0  & \delta & 1

\end{pmatrix}}$
&
$\color{ao(english)}{
\begin{pmatrix}
1 & 0 & \delta \\
\rho & \lambda & 1 \\
\mu  & 1 & 0

\end{pmatrix} }$
 &

$\color{ao(english)}{
\begin{pmatrix}
0 & \mu & 1 \\
\delta & 1 & 0 \\
1  & \rho & \lambda
\end{pmatrix}}$
&
$\color{ao(english)}{
\begin{pmatrix}
1 & \delta & 0 \\
\mu & 0 & 1\\
\rho  & 1 & \lambda
\end{pmatrix}}
$
&
$\color{ao(english)}{
\begin{pmatrix}
\lambda & \rho & 1 \\
0 & 1 & \delta \\
1 & \mu  & 0
\end{pmatrix}}
$ \\
&&&&&\\[-0.2cm]
\hline
\hline
&&&&&\\[-0.2cm]

$\color{ao(english)}{
\begin{pmatrix}
0 & 1 & \mu \\
1 & 0 & \lambda \\
\rho  & \delta & 1

\end{pmatrix}}
$  &

$ \begin{pmatrix}
0 & 1 & \lambda \\
1 & 0 & \mu \\
\delta  & \rho & 1

\end{pmatrix}$
&
$\color{ao(english)}{
\begin{pmatrix}
1 & \delta & \rho \\
\lambda & 0 & 1 \\
\mu  & 1 & 0

\end{pmatrix} }$
 &

$\color{ao(english)}{
\begin{pmatrix}
0 & \mu & 1 \\
\rho & 1 & \delta \\
1  & \lambda & 0
\end{pmatrix}}$
&
$
\begin{pmatrix}
1 & \rho & \delta \\
\mu & 0 & 1\\
\lambda  & 1 & 0
\end{pmatrix}
$
&
$
\begin{pmatrix}
0 & \lambda & 1 \\
\delta & 1 & \rho \\
1 & \mu  & 0
\end{pmatrix}
$ \\

&&&&&\\
\hline
\end{tabular}}

{\footnotesize {TABLE 22. ${\rm dim}(A^2)=3$; seven non-zero entries.}} \label{tabla21}

\end{center}

\begin{center}

\scalebox{0.7}{

\begin{tabular}{|c||c||c|}

\hline
&&\\
$M_{B}$& $P_{B'B}$ & $M_{B'}$\\
&&\\[-0.2cm]
\hline
\hline
&&\\[-0.2cm]

${
\begin{pmatrix}
\mu & 1 & \lambda   \\
1 & 0 & 0 \\
\rho & \delta & 1
\end{pmatrix}}
$  &
${
 \begin{pmatrix}
 -\dfrac{1}{2}(1+\sqrt{-3}) &  0  & 0 \\
 0 & \dfrac{1}{2}(-1+\sqrt{-3}) & 0 \\
0 & 0 & 1
\end{pmatrix}}$
&
${
\begin{pmatrix}
-\dfrac{1}{2}(1+\sqrt{-3}) \mu & 1 & \dfrac{1}{2}(-1+\sqrt{-3}) \lambda   \\
1 & 0 & 0  \\
\dfrac{1}{2}(-1+\sqrt{-3}) \rho & -\dfrac{1}{2}(1+\sqrt{-3}) \delta & 1  \\
\end{pmatrix}} $
\\

&&\\[-0.2cm]
\hline
\hline
&&\\[-0.2cm]
${
\begin{pmatrix}
\mu & 1 & \lambda   \\
1 & 0 & 0 \\
\rho & \delta & 1
\end{pmatrix}}
$  &
${
 \begin{pmatrix}
 \dfrac{1}{2}(-1+\sqrt{-3})  & 0 & 0 \\
0 & -\dfrac{1}{2}(1+\sqrt{-3})  & 0 \\
0 & 0 & 1
\end{pmatrix}}$
&
${
\begin{pmatrix}
\dfrac{1}{2}(-1+\sqrt{-3}) \mu &  1 &  -\dfrac{1}{2}(1+\sqrt{-3}) \lambda   \\
1 & 0 & 0  \\
 -\dfrac{1}{2}(1+\sqrt{-3}) \rho &   \dfrac{1}{2}(-1+\sqrt{-3}) \delta  & 1 \\
\end{pmatrix}} $
\\
&&\\[-0.2cm]
\hline
\hline
&&\\[-0.2cm]

${
\begin{pmatrix}
\mu & 1 & \lambda   \\
1 & 0 & \rho \\
\delta & 0 & 1
\end{pmatrix}}
$  &
${
 \begin{pmatrix}
-\dfrac{1}{2}(1+\sqrt{-3}) & 0 & 0 \\
0 & \dfrac{1}{2}(-1+\sqrt{-3}) & 0 \\
0 & 0 & 1
\end{pmatrix}}$
&
${
\begin{pmatrix}
-\dfrac{1}{2}(1+\sqrt{-3}) \mu  & 1 &  \dfrac{1}{2}(-1+\sqrt{-3}) \lambda \\
1 & 0 &   -\dfrac{1}{2}(1+\sqrt{-3}) \rho  \\
\dfrac{1}{2}(-1+\sqrt{-3}) \delta & 0 & 1
\end{pmatrix}} $
\\

&&\\[-0.2cm]
\hline
\hline
&&\\[-0.2cm]
${
\begin{pmatrix}
\mu & 1 & \lambda   \\
1 & 0 & \rho \\
\delta & 0 & 1
\end{pmatrix}}
$  &
${
 \begin{pmatrix}
\dfrac{1}{2}(-1+\sqrt{-3}) & 0 & 0 \\
0 & -\dfrac{1}{2}(1+\sqrt{-3}) & 0 \\
0 & 0 & 1
\end{pmatrix}}$
&
${
\begin{pmatrix}
\dfrac{1}{2}(-1+\sqrt{-3}) \mu & 1 &  -\dfrac{1}{2}(1+\sqrt{-3}) \lambda  \\
1 & 0 &  \dfrac{1}{2}(-1+\sqrt{-3}) \rho \\
-\dfrac{1}{2}(1+\sqrt{-3}) \delta & 0 & 1
\end{pmatrix}} $
\\
&&\\[-0.2cm]
\hline
\hline
&&\\[-0.2cm]
${
\begin{pmatrix}
0 & 1 & \mu \\
1 & \lambda & \rho \\
\delta & 0 & 1
\end{pmatrix}}
$  &
${
 \begin{pmatrix}
0 & -\dfrac{1}{2}(1+\sqrt{-3})  & 0 \\
\dfrac{1}{2}(-1+\sqrt{-3})  & 0 & 0 \\
0 & 0 & 1
\end{pmatrix}}$
&
${
\begin{pmatrix}
0 & 1 & \dfrac{1}{2}(-1+\sqrt{-3}) \mu   \\
1 & \dfrac{1}{2}(-1+\sqrt{-3}) \lambda & -\dfrac{1}{2}(1+\sqrt{-3}) \rho \\
\dfrac{1}{2}(-1+\sqrt{-3}) \delta & 0 & 1
\end{pmatrix}} $
\\
&&\\[-0.2cm]
\hline
\hline
&&\\[-0.2cm]
${
\begin{pmatrix}
0 & 1 & \mu \\
1 & \lambda & \rho \\
\delta & 0 & 1
\end{pmatrix}}
$  &
${
 \begin{pmatrix}
0 & \dfrac{1}{2}(-1+\sqrt{-3})  & 0 \\
-\dfrac{1}{2}(1+\sqrt{-3})  & 0 & 0 \\
0 & 0 & 1
\end{pmatrix}}$
&
${
\begin{pmatrix}
0 & 1 & -\dfrac{1}{2}(1+\sqrt{-3}) \mu    \\
1 & -\dfrac{1}{2}(1+\sqrt{-3}) \lambda & \dfrac{1}{2}(-1+\sqrt{-3}) \rho \\
-\dfrac{1}{2}(1+\sqrt{-3}) \delta & 0 & 1
\end{pmatrix}} $
\\
&&\\
\hline
\end{tabular}
}

\end{center}

\begin{center}

\scalebox{0.7}{

\begin{tabular}{|c||c||c|}

\hline
&&\\
$M_{B}$& $P_{B'B}$ & $M_{B'}$\\
&&\\[-0.2cm]
\hline
\hline
&&\\[-0.2cm]

${
\begin{pmatrix}
0 & 1 & \mu   \\
1 & 0 & \lambda \\
\rho & \delta & 1
\end{pmatrix}}
$  &
${
 \begin{pmatrix}
 -\dfrac{1}{2}(1+\sqrt{-3}) &  0  & 0 \\
 0 & \dfrac{1}{2}(-1+\sqrt{-3}) & 0 \\
0 & 0 & 1
\end{pmatrix}}$
&
${
\begin{pmatrix}
0 & 1 & \dfrac{1}{2}(-1+\sqrt{-3}) \mu \\
1 & 0 & - \dfrac{1}{2}(1+\sqrt{-3}) \lambda   \\
\dfrac{1}{2}(-1+\sqrt{-3}) \rho & -\dfrac{1}{2}(1+\sqrt{-3}) \delta & 1  \\
\end{pmatrix}} $
\\

&&\\[-0.2cm]
\hline
\hline
&&\\[-0.2cm]
${
\begin{pmatrix}
0 & 1 & \mu   \\
1 & 0 & \lambda \\
\rho & \delta & 1
\end{pmatrix}}
$  &
${
 \begin{pmatrix}
 \dfrac{1}{2}(-1+\sqrt{-3})  & 0 & 0 \\
0 & -\dfrac{1}{2}(1+\sqrt{-3})  & 0 \\
0 & 0 & 1
\end{pmatrix}}$
&
${
\begin{pmatrix}
0 & 1 & -\dfrac{1}{2}(1+\sqrt{-3}) \mu \\
1 & 0 &   \dfrac{1}{2}(-1+\sqrt{-3}) \lambda   \\
 -\dfrac{1}{2}(1+\sqrt{-3}) \rho &   \dfrac{1}{2}(-1+\sqrt{-3}) \delta  & 1 \\
\end{pmatrix}} $
\\
&&\\[-0.2cm]
\hline
\hline
&&\\[-0.2cm]

${
\begin{pmatrix}
0 & 1 & \mu   \\
1 & 0 & \lambda \\
\rho & \delta & 1
\end{pmatrix}}
$  &
${
 \begin{pmatrix}
0 & -\dfrac{1}{2}(1+\sqrt{-3}) & 0 \\
\dfrac{1}{2}(-1+\sqrt{-3}) & 0 & 0 \\
0 & 0 & 1
\end{pmatrix}}$
&
${
\begin{pmatrix}
0 & 1 & -\dfrac{1}{2}(1+\sqrt{-3}) \mu  \\
1 & 0 &  \dfrac{1}{2}(-1+\sqrt{-3}) \lambda \\
 -\dfrac{1}{2}(1+\sqrt{-3}) \delta & \dfrac{1}{2}(-1+\sqrt{-3}) \rho &  1
\end{pmatrix}} $
\\

&&\\[-0.2cm]
\hline
\hline
&&\\[-0.2cm]
${
\begin{pmatrix}
0 & 1 & \mu   \\
1 & 0 & \lambda \\
\rho & \delta & 1
\end{pmatrix}}
$  &
${
 \begin{pmatrix}
0 & \dfrac{1}{2}(-1+\sqrt{-3}) & 0 \\
-\dfrac{1}{2}(1+\sqrt{-3}) & 0 & 0 \\
0 & 0 & 1
\end{pmatrix}}$
&
${
\begin{pmatrix}
0 & 1 & \dfrac{1}{2}(-1+\sqrt{-3}) \mu \\
1 & 0 &  -\dfrac{1}{2}(1+\sqrt{-3}) \lambda  \\
\dfrac{1}{2}(-1+\sqrt{-3}) \delta & -\dfrac{1}{2}(1+\sqrt{-3}) \rho & 1
\end{pmatrix}} $
\\
&&\\
\hline
\end{tabular}
}

{\footnotesize {TABLE $22^\prime$.}}
\end{center}

\medskip

\noindent
{\bf Case 6.} There are eight non-zero elements in the matrix.

\noindent
In this case there are only nine possibilities which appear in the table that follows. The condition that the entries of the matrix must satisfy is one of the following: $\eta\lambda+\mu\rho-\delta\eta\mu \neq 1$ or $\delta\mu+\eta\rho-\delta\eta\lambda \neq 1$, just to be sure that the determinant of the corresponding matrix is different from zero.
\medskip

\begin{center}

\scalebox{0.7}{

\begin{tabular}{|c||c||c||c||c||c|}
\hline
&&&&&\\
&(1,2)&(1,3)&(2,3)&(1,2,3)&(1,3,2)\\
&&&&&\\[-0.2cm]
\hline
\hline
&&&&&\\[-0.2cm]

$\color{ao(english)}{
\begin{pmatrix}
1 & \mu & \lambda \\
\rho & 1 & \delta \\
\eta  & 0 & 1

\end{pmatrix}}
$  &

$\color{ao(english)}{
 \begin{pmatrix}
1 & \rho & \delta \\
\mu & 1 & \lambda \\
0  & \eta & 1

\end{pmatrix}}$
&
$\color{ao(english)}{
\begin{pmatrix}
1 & 0 & \eta \\
\delta & 1 & \rho \\
\lambda  & \mu & 1

\end{pmatrix} }$
 &

$\color{ao(english)}{
\begin{pmatrix}
1 & \lambda & \mu \\
\eta & 1 & 0 \\
\rho  & \delta & 1
\end{pmatrix}}$
&
$\color{ao(english)}{
\begin{pmatrix}
1 & \eta & 0 \\
\lambda & 1 & \mu  \\
\delta  & \rho & 1
\end{pmatrix}}$
&
$\color{ao(english)}{
\begin{pmatrix}
1 & \delta & \rho \\
0 & 1 & \eta \\
\mu  & \lambda & 1
\end{pmatrix}}
$ \\
&&&&&\\[-0.2cm]
\hline
\hline
&&&&&\\[-0.2cm]

$\color{ao(english)}{
\begin{pmatrix}
\mu & \lambda & 1 \\
\rho & 1 & \delta \\
1 & \eta   & 0

\end{pmatrix}}
$  &

$ \begin{pmatrix}
1 & \rho  & \delta \\
\lambda & \mu & 1 \\
\eta & 1 & 0

\end{pmatrix}$
&
$\color{ao(english)}{
\begin{pmatrix}
0 &  \eta & 1 \\
\delta & 1 &  \rho \\
1  & \lambda & \mu

\end{pmatrix} }$
 &

$\color{ao(english)}{
\begin{pmatrix}
\mu & 1 & \lambda \\
1 & 0 & \eta  \\
\rho &  \delta & 1
\end{pmatrix}}$
&
$
\begin{pmatrix}
0 & 1 & \eta  \\
1 & \mu & \lambda \\
\delta  &  \rho & 1
 \end{pmatrix}$
&
$
\begin{pmatrix}
1 & \delta & \rho  \\
 \eta & 0 & 1 \\
\lambda  & 1 & \mu
\end{pmatrix}
$ \\
&&&&&\\
\hline
\end{tabular}}

{\footnotesize {TABLE 23. ${\rm dim}(A^2)=3$; eight non-zero entries.}} \label{tabla22}

\end{center}

\newpage

\begin{center}

\scalebox{0.7}{

\begin{tabular}{|c||c||c|}

\hline
&&\\
$M_{B}$& $P_{B'B}$ & $M_{B'}$\\
&&\\[-0.2cm]
\hline
\hline
&&\\[-0.2cm]

${
\begin{pmatrix}
\mu & \lambda & 1   \\
\rho & 1 & \delta \\
1 & \eta & 0
\end{pmatrix}}
$  &
${
 \begin{pmatrix}
 -\dfrac{1}{2}(1+\sqrt{-3}) &  0  & 0 \\
 0 & 1 & 0 \\
 0 & 0 & \dfrac{1}{2}(-1+\sqrt{-3}) & 0 \\
\end{pmatrix}}$
&
${
\begin{pmatrix}
- \dfrac{1}{2}(1+\sqrt{-3}) \mu  &  \dfrac{1}{2}(-1+\sqrt{-3}) \lambda & 1   \\
\dfrac{1}{2}(-1+\sqrt{-3}) \rho & 1 & -\dfrac{1}{2}(1+\sqrt{-3}) \delta  \\
1 & - \dfrac{1}{2}(1+\sqrt{-3}) \eta & 0
\end{pmatrix}} $
\\

&&\\[-0.2cm]
\hline
\hline
&&\\[-0.2cm]
${
\begin{pmatrix}
\mu & \lambda & 1   \\
\rho & 1 & \delta \\
1 & \eta & 0
\end{pmatrix}}
$  &
${
 \begin{pmatrix}
 \dfrac{1}{2}(-1+\sqrt{-3})  & 0 & 0 \\
0 & 0 & 1 \\
0 & 0 & -\dfrac{1}{2}(1+\sqrt{-3})  \\
\end{pmatrix}}$
&
${
\begin{pmatrix}
\dfrac{1}{2}(-1+\sqrt{-3}) \mu &  -\dfrac{1}{2}(1+\sqrt{-3}) \lambda & 1   \\
 -\dfrac{1}{2}(1+\sqrt{-3}) \rho &  1 &  \dfrac{1}{2}(-1+\sqrt{-3}) \delta   \\
 1 & \dfrac{1}{2}(-1+\sqrt{-3}) \eta & 0
\end{pmatrix}} $
\\
&&\\[-0.2cm]
\hline
\hline
&&\\[-0.2cm]

${
\begin{pmatrix}
\mu & \lambda & 1   \\
\rho & 1 & \delta \\
1 & \eta & 0
\end{pmatrix}}
$  &
${
 \begin{pmatrix}
0 & \dfrac{1}{\mu} & 0 \\
\dfrac{1}{\sqrt[3]{\delta \eta^2} } & 0 & 0 \\
0 & 0 & \dfrac{1}{\sqrt[3]{\delta^2 \eta}}
\end{pmatrix}}$
&
${
\begin{pmatrix}
\dfrac{1}{\sqrt[3]{\delta \eta ^2}} & \dfrac{\rho \sqrt[3]{\delta  \eta^2}}{\mu^2} & 1  \\
\dfrac{\mu \lambda}{\eta\sqrt[3]{\delta^2\eta}} & 1 & \dfrac{\mu}{\delta \sqrt[3]{\delta \eta ^2}} \\
 1 & \dfrac{\sqrt[3]{\delta^2 \eta}}{\mu^2} & 0
\end{pmatrix}} $
\\

&&\\[-0.2cm]
\hline
\hline
&&\\[-0.2cm]
${
\begin{pmatrix}
\mu & \lambda & 1   \\
\rho & 1 & \delta \\
1 & \eta & 0
\end{pmatrix}}
$  &
${
 \begin{pmatrix}
0 & \dfrac{1}{\mu} & 0 \\
-\dfrac{{\phi}}{\sqrt[3]{\delta \eta^2} } & 0 & 0 \\
0 & 0 & \dfrac{{\phi}^2}{\sqrt[3]{\delta^2 \eta}}
\end{pmatrix}}$
&
${
\begin{pmatrix}
-\dfrac{{\phi}}{\sqrt[3]{\delta \eta ^2}} & \dfrac{{\phi}^2\rho \sqrt[3]{\delta  \eta^2}}{\mu^2} & 1  \\
\dfrac{{\phi}^2 \mu \lambda}{\eta\sqrt[3]{\delta^2\eta}} & 1 & -\dfrac{{\phi}\mu}{\delta \sqrt[3]{\delta \eta ^2}} \\
 1 & -\dfrac{{\phi}\sqrt[3]{\delta^2 \eta}}{\mu^2} & 0
\end{pmatrix}} $
\\

&&\\[-0.2cm]
\hline
\hline
&&\\[-0.2cm]
${
\begin{pmatrix}
\mu & \lambda & 1   \\
\rho & 1 & \delta \\
1 & \eta & 0
\end{pmatrix}}
$  &
${
 \begin{pmatrix}
0 & \dfrac{1}{\mu} & 0 \\
\dfrac{{\phi}^2}{\sqrt[3]{\delta \eta^2} } & 0 & 0 \\
0 & 0 & -\dfrac{{\phi}}{\sqrt[3]{\delta^2 \eta}}
\end{pmatrix}}$
&
${
\begin{pmatrix}
\dfrac{{\phi}^2}{\sqrt[3]{\delta \eta ^2}} & -\dfrac{{\phi}\rho \sqrt[3]{\delta  \eta^2}}{\mu^2} & 1  \\
-\dfrac{{\phi} \mu \lambda}{\eta\sqrt[3]{\delta^2\eta}} & 1 & \dfrac{{\phi}^2\mu}{\delta \sqrt[3]{\delta \eta ^2}} \\
 1 & \dfrac{{\phi}^2\sqrt[3]{\delta^2 \eta}}{\mu^2} & 0
\end{pmatrix}} $
\\
&&\\
\hline
\end{tabular}
}

{\footnotesize {TABLE $23^\prime$.}}
\end{center}

\medskip

\noindent
{\bf Case 7} All the entries in the matrix are non-zero. In this case only one matrix appears.

\begin{center}

\scalebox{0.7}{

\begin{tabular}{|c||c||c||c||c||c|}

\hline
&&&&&\\
&(1,2)&(1,3)&(2,3)&(1,2,3)&(1,3,2)\\
&&&&&\\[-0.2cm]
\hline
\hline
&&&&&\\[-0.2cm]

$\color{ao(english)}{
\begin{pmatrix}
1 & \mu & \lambda \\
\rho & 1 & \delta \\
\eta  & \tau & 1

\end{pmatrix}}$  &

$
 \begin{pmatrix}
1 & \rho & \delta \\
\mu & 1 & \lambda \\
\tau  & \eta & 1

\end{pmatrix}$
&
$\begin{pmatrix}
1 &\tau & \eta \\
\delta & 1 &  \rho \\
\lambda  & \mu & 1

\end{pmatrix} $
 &

 $\begin{pmatrix}
1 & \lambda & \mu \\
\eta & 1 & \tau \\
\rho  & \delta & 1
\end{pmatrix}$
&
$
\begin{pmatrix}
1 & \eta & \tau \\
\lambda & 1 & \mu \\
\delta & \rho & 1
\end{pmatrix}$
&
$
\begin{pmatrix}
1 & \delta  & \rho \\
\tau & 1 & \eta \\
\mu & \lambda & 1
\end{pmatrix}
$ \\
&&&&&\\
\hline
\end{tabular}}

{\footnotesize {TABLE 24. ${\rm dim}(A^2)=3$; nine non-zero entries.}}\label{tabla23}
\end{center}
\medskip

\noindent
and the condition that the parameters must satisfy is $\eta\rho+\delta\lambda+\mu\tau-\eta\lambda\tau-\delta\mu\rho\neq 1$.
\medskip

\begin{appendix}
\section{}\label{apendice}

In this appendix we include the study of the different matrices that can appear as change of basis matrices for an evolution algebra $A$ such that ${\rm dim}(A)=3$ and ${\rm dim}(A^2)=1$.

We have separated this piece from the proof of Theorem \ref{thm:clasificacion} in order to not enlarge it. We think that it can be of interest as we did a similar study when ${\rm dim}(A^2)=2$  and when ${\rm dim}(A^2)=3$. The notation we use is as in Case ${\rm dim}(A^2)=1$.
\medskip

Let $B'$ be an arbitrary natural basis of $A$ and let $
P_{B'B}=
\begin{pmatrix}
p_{11} &  p_{12} & p_{13} \\
 p_{21} & p_{22} & p_{23} \\
 p_{31} & p_{32} & p_{33}
\end{pmatrix}
$
be the change of basis matrix.
By \eqref{ecuac3}:

\begin{eqnarray}\label{ecuacionesproducto1}
\left\lbrace
\begin{array}{lll}
p_{11}p_{12}a_{11}+p_{21}p_{22}c_1a_{11}+ p_{31}p_{32}c_2a_{11} &=0 \\
p_{11}p_{12}a_{21}+p_{21}p_{22}c_1a_{21}+ p_{31}p_{32}c_2a_{21} &=0 \\
p_{11}p_{12}a_{31}+p_{21}p_{22}c_1a_{31}+ p_{31}p_{32}c_2a_{31} &=0
\end{array}
\right.
\end{eqnarray}

\begin{eqnarray}\label{ecuacionesproducto2}
\left\lbrace
\begin{array}{lll}
p_{11}p_{13}a_{11}+p_{21}p_{23}c_1a_{11}+ p_{31}p_{33}c_2a_{11} &=0 \\
p_{11}p_{13}a_{21}+p_{21}p_{23}c_1a_{21}+ p_{31}p_{33}c_2a_{21} &=0 \\
p_{11}p_{13}a_{31}+p_{21}p_{23}c_1a_{31}+ p_{31}p_{33}c_2a_{31} &=0
\end{array}
\right.
\end{eqnarray}

\begin{eqnarray}\label{ecuacionesproducto3}
\left\lbrace
\begin{array}{lll}
p_{12}p_{13}a_{11}+p_{22}p_{23}c_1a_{11}+ p_{32}p_{33}c_2a_{11} &=0 \\
p_{12}p_{13}a_{21}+p_{22}p_{23}c_1a_{21}+ p_{32}p_{33}c_2a_{21} &=0 \\
p_{12}p_{13}a_{31}+p_{22}p_{23}c_1a_{31}+ p_{32}p_{33}c_2a_{31} &=0
\end{array}
\right.
\end{eqnarray}

\noindent
Since $e_1^2 \neq 0$, there exists $j \in \{1,2,3\}$ such that $a_{j1} \neq 0$. In any case, from \eqref{ecuacionesproducto1}, \eqref{ecuacionesproducto2} and \eqref{ecuacionesproducto3} we have:

\begin{eqnarray}
\label{ep4} p_{11}p_{12}& =-(p_{21}p_{22}c_1+ p_{31}p_{32}c_2);  \\
\label{ep5} p_{11}p_{13}& =-(p_{21}p_{23}c_1+ p_{31}p_{33}c_2);  \\
\label{ep6} p_{12}p_{13}& =-(p_{22}p_{23}c_1+ p_{32}p_{33}c_2).
\end{eqnarray}
\medskip

\noindent
{\bf Case 1} Assume $p_{11}p_{12}p_{13}\neq 0$.
\medskip

\noindent
This implies that $p_{21}p_{22}c_1+ p_{31}p_{32}c_2\neq 0$, $p_{21}p_{23}c_1+ p_{31}p_{33}c_2\neq 0$ and $p_{22}p_{23}c_1+ p_{32}p_{33}c_2 \neq 0$. So, $ p_{11}=\dfrac{-(p_{21}p_{22}c_1+ p_{31}p_{32}c_2)}{p_{12}}$. Replacing this value in \eqref{ep5}, we get $p_{13}=\dfrac{(p_{21}p_{23}c_1+p_{31}p_{33}c_2)p_{12}}{p_{21}p_{22}c_1+p_{31}p_{32}c_2}$. Finally, we replace $p_{13}$ in \eqref{ep6} and we have $p_{12}=\pm \sqrt {\dfrac{-(p_{21}p_{22}c_1+ p_{31}p_{32}c_2)(p_{22}p_{23}c_1+ p_{32}p_{33}c_2)}{p_{21}p_{23}c_1+p_{31}p_{33}c_2}}$.

\noindent
Therefore:

\begin{eqnarray*}
p_{11} &= & -\sqrt {-\dfrac{(p_{21}p_{22}c_1+ p_{31}p_{32}c_2)(p_{21}p_{23}c_1+p_{31}p_{33}c_2)}{p_{22}p_{23}c_1+ p_{32}p_{33}c_2}} \\
p_{12} & = &\sqrt{ {-\dfrac{(p_{21}p_{22}c_1+ p_{31}p_{32}c_2)(p_{22}p_{23}c_1+ p_{32}p_{33}c_2)}{p_{21}p_{23}c_1+p_{31}p_{33}c_2}}} \\
p_{13} & = &\sqrt{-\dfrac{(p_{21}p_{23}c_1+ p_{31}p_{33}c_2)(p_{22}p_{23}c_1+ p_{32}p_{33}c_2)}{p_{21}p_{22}c_1+p_{31}p_{32}c_2}}
\end{eqnarray*}

\noindent
or
\begin{eqnarray*}
p_{11} & = & \sqrt {-\dfrac{(p_{21}p_{22}c_1+ p_{31}p_{32}c_2)(p_{21}p_{23}c_1+p_{31}p_{33}c_2)}{p_{22}p_{23}c_1+ p_{32}p_{33}c_2}} \\
p_{12} & = &-\sqrt{ {-\dfrac{(p_{21}p_{22}c_1+ p_{31}p_{32}c_2)(p_{22}p_{23}c_1+ p_{32}p_{33}c_2)}{p_{21}p_{23}c_1+p_{31}p_{33}c_2}}} \\
p_{13} & = &-\sqrt{-\dfrac{(p_{21}p_{23}c_1+ p_{31}p_{33}c_2)(p_{22}p_{23}c_1+ p_{32}p_{33}c_2)}{p_{21}p_{22}c_1+p_{31}p_{32}c_2}}
\end{eqnarray*}

\noindent
 {\bf Case 2} Suppose $p_{13}=0$ and $p_{11}p_{12} \neq 0$.
\medskip

\noindent
We have the following equations:

\begin{eqnarray}
p_{11}p_{12} & = &- (p_{21}p_{22}c_1+p_{31}p_{32}c_2); \label{apendice1} \\
p_{21}p_{23}c_1 & = &- p_{31}p_{33}c_2;  \label{apendice2} \\
p_{22}p_{23}c_1& =&-p_{32}p_{33}c_2; \label{apendice3}
\end{eqnarray}

\noindent
 {\bf Case 2.1} $p_{31}c_2\neq 0$.
\medskip

\noindent
Necessarily $p_{23}\neq 0$, since otherwise $p_{33}=0$ contradicting the fact to $\vert P_{B'B} \vert \neq 0$. Moreover, $p_{32} \neq 0$. Indeed, if $p_{32}=0$ then or $p_{22}=0$ or $c_1=0$. But, on the other hand we have that $p_{21}p_{22}c_1 \neq 0$. Contradiction. Now, we distinguish between $c_1 =0$ or not.
\medskip

\noindent
 {\bf Case 2.1.1} $c_1=0$.
\medskip

\noindent
As $p_{31}p_{33}c_{2}=0$ necessarily $p_{33}=0$. Then, the change of basis matrix is

$$P_{B'B}=
\begin{pmatrix}
p_{11} &  -\dfrac{p_{31}p_{32}c_2}{p_{11}} & 0 \\
 p_{21} & p_{22} & p_{23} \\
 p_{31} & p_{32} & 0
\end{pmatrix}
$$

\noindent
with $p_{11}p_{12}p_{23}p_{31}p_{32}c_2 \neq 0$ and $p_{11}^2+c_2p_{31}^2 \neq 0$.
\medskip

\noindent
 {\bf Case 2.1.2} $c_1 \neq 0$.
\medskip

\noindent
 {\bf Case 2.1.2.1} $p_{21}=0$.
\medskip

\noindent
As $p_{31}p_{33}c_2 = 0$ necessarily $p_{33} = 0$. So $p_{22}p_{23}c_1 = 0$. Or equivalently $p_{22} =0$. Then
$$P_{B'B}=
\begin{pmatrix}
p_{11} &  -\dfrac{p_{31}p_{32}c_2}{p_{11}} & 0 \\
 0 & 0 & p_{23} \\
 p_{31} & p_{32} & 0
\end{pmatrix}
$$

\noindent
with $p_{11}p_{23}p_{31}p_{32}c_2 \neq 0$ and $p_{11}^2+c_2p_{31}^2 \neq 0$.
\medskip

\noindent
 {\bf Case 2.1.2.2} $p_{21} \neq 0$.
\medskip

\noindent
By \eqref{apendice2} $p_{33}=\dfrac{-p_{21}p_{23}c_1}{p_{31}c_2}$. If we remove $p_{33}$ in \eqref{apendice3} we get that $p_{22}=\dfrac{p_{32}p_{21}}{p_{31}}$. And finally if we replace $p_{22}$ and $p_{33}$ in \eqref{apendice1} we have   $p_{12}=-\dfrac{p_{32}(c_1p_{21}^2+c_2p_{31}^2)}{p_{11}p_{31}}$. Then
$$P_{B'B}=
\begin{pmatrix}
p_{11} &  -\dfrac{p_{32}(p_{21}^2c_1+p_{31}^2c_2)}{p_{11}p_{31}} & 0 \\
& & \\
 p_{21} & \dfrac{p_{32}p_{21}}{p_{31}} & p_{23} \\
 & & \\
p_{31} & p_{32} & \dfrac{-p_{21}p_{23}c_1}{p_{31}c_2}
\end{pmatrix},
$$

\noindent
where $p_{ij}\neq 0 \forall i,j \in {1,2,3} $ except $p_{13}$. Furthermore, $p_{21}^2c_1+p_{31}^2c_2 \neq 0$ and $p_{21}^2c_1+p_{31}^2c_2+p_{11}^2 \neq 0$.
\medskip

\noindent
 {\bf Case 2.2} $c_2 \neq 0$ and $p_{31}=0$.
\medskip

\noindent
The equations \eqref{apendice1}, \eqref{apendice2} and \eqref{apendice3} are as follows:

\begin{eqnarray*}
p_{11}p_{12} & = &- p_{21}p_{22}c_1;  \\
p_{21}p_{23}c_1 & = &0;   \\
p_{22}p_{23}c_1& =&-p_{32}p_{33}c_2.
\end{eqnarray*}

\noindent
As $p_{21}p_{22}c_1 \neq 0$ then necessarily $p_{23}=0$ and so $p_{32}=0$ ($p_{33} \neq 0$ because otherwise $\vert P \vert =0$). Therefore

$$P_{B'B}=
\begin{pmatrix}
p_{11} &  -\dfrac{p_{21}p_{22}c_1}{p_{12}} & 0 \\
 p_{21} & p_{22} & 0 \\
0 & 0 & p_{33}
\end{pmatrix},
$$

\noindent
with $p_{11}p_{12}p_{21}p_{22}p_{33}c_1c_2 \neq 0$ and $p_{11}^2+ p_{21}^2c_1 \neq 0$.
\medskip

\noindent
 {\bf Case 2.3} $c_2=0 $.
\medskip

\noindent
The equations \eqref{apendice1}, \eqref{apendice2} and \eqref{apendice3} turn out

\begin{eqnarray*}
p_{11}p_{12} & = &- p_{21}p_{22}c_1  \\
p_{21}p_{23}c_1 & = &0   \\
p_{22}p_{23}c_1& =&0
\end{eqnarray*}

\noindent
As $p_{21}p_{22}c_1 \neq 0 $ then $p_{23} =0$. Therefore,

$$P_{B'B}=
\begin{pmatrix}
p_{11} &  -\dfrac{p_{21}p_{22}c_1}{p_{11}} & 0 \\
 p_{21} & p_{22} & 0 \\
p_{31} & p_{32} & p_{33}
\end{pmatrix}
$$

\noindent
with $p_{11}p_{21}p_{22}p_{33}c_1 \neq 0$ and $p_{11}^2+ p_{21}^2c_1 \neq 0$.
\medskip

\noindent
  {\bf Case 3} $p_{12}=0$ and $p_{11}p_{13}=0$.
\medskip

\noindent
Reasoning in the same way as Case 1.2, we obtain the following results:
\medskip

\noindent
 {\bf Case 3.1} $p_{31}c_2 \neq 0$.
\medskip

\noindent
Necessarily   $p_{22}p_{33} \neq 0$.
\medskip

\noindent
{\bf Case 3.1.1} $ c_1 \neq 0$.
\medskip

\noindent
 {\bf Case 3.1.1.1} $ p_{21} \neq 0$.
\medskip

\noindent
The change of basis matrix is as follows:

$$P_{B'B}=
\begin{pmatrix}
p_{11} & 0 &   -\dfrac{p_{23}(p_{21}^2c_1+p_{31}^2c_2)}{p_{11}p_{21}}  \\
 p_{21} & \dfrac{-p_{31}p_{32}c_2}{p_{21}c_1} & p_{23} \\
p_{31} & p_{32} &\dfrac{p_{23}p_{31}}{p_{21}}
\end{pmatrix},
$$

\noindent
where $p_{ij}\neq 0 \forall i,j \in {1,2,3} $ except $p_{12}$. Furthermore, $p_{21}^2c_1+p_{31}^2c_2 \neq 0$ and $p_{21}^2c_1+p_{31}^2c_2+p_{11}^2 \neq 0$.
\medskip

\noindent
 {\bf Case 3.1.1.2} $ p_{21}= 0$.
\medskip

\noindent
So $p_{32}=0$ and $p_{23}=0$. And

$$P_{B'B}=
\begin{pmatrix}
p_{11} & 0 &   -\dfrac{p_{31}p_{33}c_2}{p_{11}}  \\
0 & p_{22} & 0 \\
p_{31} & 0 & p_{33}
\end{pmatrix}
$$

\noindent
with $p_{11}p_{22}p_{31}p_{33} c_2 \neq 0$ and $p_{11}^2+p_{31}^2c_2 \neq 0$.
\medskip

\noindent
 {\bf Case 3.1.2} $ c_1=0$.

$$P_{B'B}=
\begin{pmatrix}
p_{11} & 0 &   -\dfrac{p_{31}p_{33}c_2}{p_{11}}  \\
p_{21} & p_{22} & p_{23} \\
p_{31} & 0 & p_{33}
\end{pmatrix},
$$

\noindent
where $p_{11}p_{31}p_{33}c_2 \neq 0$ and $p_{11}^2+p_{31}^2c_2 \neq 0$.
\medskip

\noindent
{\bf Case 3.2} $ p_{31}=0$ and $c_2 \neq 0$.
\medskip

\noindent
Therefore $p_{22}p_{33}=0$ and

$$P_{B'B}=
\begin{pmatrix}
p_{11} & 0 &   -\dfrac{p_{21}p_{23}c_1}{p_{11}}  \\
p_{21} & 0 & p_{23} \\
0 & p_{32} &  0
\end{pmatrix}
$$

\noindent
with $p_{11}p_{13}p_{21}p_{23}p_{32}c_2 \neq 0$ and $p_{11}^2+p_{21}^2c_1 \neq 0$.
\medskip

\noindent
{\bf Case 3.3} $c_2 =0$.
\medskip

Necessarily $p_{22}=0$ and

$$P_{B'B}=
\begin{pmatrix}
p_{11} & 0 &   -\dfrac{p_{21}p_{23}c_1}{p_{11}}  \\
p_{21} & 0 & p_{23} \\
p_{31} & p_{32} &  p_{32}
\end{pmatrix},
$$

\noindent
where $p_{11}p_{21}p_{23}p_{32}c_1 \neq 0$ and $p_{11}^2+c_1p_{21}^2 \neq 0$.
\medskip

\noindent
 {\bf Case 4} $p_{11}=0$ and $p_{12}p_{13}=0$.
\medskip

\noindent
In the same way as in Case 1.2 and Case 1.3 we obtain the following change of basis matrices:
\medskip

\noindent
{\bf Case 4.1} $p_{32}c_2 \neq 0$.
\medskip

\noindent
Then  $p_{21}p_{33} \neq 0$.
\medskip

\noindent
 {\bf Case 4.1.1} $ c_1 \neq 0$.
\medskip

\noindent
  {\bf Case 4.1.1.1} $ p_{22} \neq 0$.
\medskip

\noindent
The change of basis matrix is as follows:

$$P_{B'B}=
\begin{pmatrix}
0 & p_{12} &   -\dfrac{p_{33}(p_{22}^2c_1+p_{32}^2c_2)}{p_{32}p_{12}}  \\
 p_{21} & p_{22}  & \dfrac{p_{22}p_{33}}{p_{32}}  \\
 -\dfrac{p_{21}p_{22}c_1}{p_{32}c_2} & p_{32} & p_{33}
\end{pmatrix},
$$

\noindent
where $p_{ij}\neq 0 \forall i,j \in {1,2,3} $ except $p_{11}$. Furthermore, $p_{22}^2c_1+p_{32}^2c_2 \neq 0$ and $p_{22}^2c_1+p_{32}^2c_2+p_{12}^2 \neq 0$.
\medskip

\noindent
 {\bf Case 4.1.1.2} $ p_{22}= 0$.
\medskip

\noindent
Then $p_{31}=p_{23}=0$. And

$$P_{B'B}=
\begin{pmatrix}
0 & p_{12} &   -\dfrac{p_{32}p_{33}c_2}{p_{12}}  \\
p_{21} & 0 & 0 \\
0 & p_{32} & p_{33}
\end{pmatrix},
$$

\noindent
where $p_{12}p_{21}p_{32}p_{33} c_2 \neq 0$ and $p_{12}^2+p_{32}^2c_2 \neq 0$.
\medskip

\noindent
 {\bf Case 4.1.2} $ c_1=0$.
\medskip

$$P_{B'B}=
\begin{pmatrix}
0 & p_{12} &   -\dfrac{p_{32}p_{33}c_2}{p_{12}}  \\
p_{21} & p_{22} & p_{23} \\
0 & p_{32} & p_{33}
\end{pmatrix}
$$

\noindent
with $(p_{12}p_{32}p_{33}c_2)(p_{12}^2+p_{32}^2c_2) \neq 0$.
\medskip

\noindent
 {\bf Case 4.2} $ p_{32}=0$ and $c_2 \neq 0$.
\medskip

\noindent
So $p_{21}p_{33}=0$ and

$$P_{B'B}=
\begin{pmatrix}
0 & p_{12} &   -\dfrac{p_{22}p_{23}c_1}{p_{12}}  \\
0 & p_{22} & p_{23} \\
p_{31} & 0 &  0
\end{pmatrix},
$$

\noindent
where $p_{12}p_{13}p_{22}p_{23}p_{31}c_2(p_{12}^2+p_{22}^2c_1) \neq 0$.
\medskip

\noindent
 {\bf Case 4.3} $c_2 =0$.
\medskip

\noindent
Then $p_{21}=0$ and

$$P_{B'B}=
\begin{pmatrix}
0 & p_{12} &   -\dfrac{p_{22}p_{23}c_1}{p_{12}}  \\
0 & p_{22} & p_{23} \\
p_{31} & p_{32} &  p_{33}
\end{pmatrix}
$$

\noindent
for $p_{12}p_{22}p_{23}p_{31}c_1 \neq 0$ and $p_{12}^2+c_1p_{22}^2 \neq 0$.
\medskip

\noindent
{\bf Case 5} $p_{1i}=p_{1j}=0$ for some $i,j \in {1,2,3} \, i\neq j$.
\medskip

\noindent
The equations \eqref{apendice1}, \eqref{apendice2} and \eqref{apendice3} are as follows:

\begin{eqnarray*}
p_{21}p_{22}c_1 & = &- p_{31}p_{32}c_2; \\
p_{21}p_{23}c_1 & = &- p_{31}p_{33}c_2;   \\
p_{22}p_{23}c_1& =&-p_{32}p_{33}c_2.
\end{eqnarray*}

\noindent
{\bf Case 5.1} $c_1=c_2=0$.
\medskip

\noindent
If, for example, $p_{12}=p_{13}=0$, the change of basis matrix is

$$P_{B'B}=
\begin{pmatrix}
p_{11} & 0 &  0  \\
p_{21} & p_{22} & p_{23} \\
p_{31} & p_{32} &  p_{33}
\end{pmatrix}
$$

\noindent
with $p_{11}(p_{22}p_{33}-p_{23}p_{32})\neq 0$.

\noindent
In case of $p_{11}=p_{12}=0$, the change of basis matrix is

$$P_{B'B}=
\begin{pmatrix}
0 & 0 &  p_{13} \\
p_{21} & p_{22} & p_{23} \\
p_{31} & p_{32} &  p_{33}
\end{pmatrix}
$$

\noindent
for $p_{13}(p_{21}p_{32}-p_{22}p_{31})\neq 0$.

\noindent
For $p_{11}=p_{13}=0$

$$P_{B'B}=
\begin{pmatrix}
0 & p_{12} &  0 \\
p_{21} & p_{22} & p_{23} \\
p_{31} & p_{32} &  p_{33}
\end{pmatrix},
$$

\noindent
where $p_{12}(p_{23}p_{31}-p_{21}p_{33})\neq 0$.
\medskip

\noindent
{\bf Case 5.2} $c_i=0$ and $c_j \neq 0$ for $i,j \in \{1,2\} \, i \neq j $.
\medskip

\noindent
Necessarily $p_{sk}=p_{sm}=0$ for some $k,m \in \{1,2,3\} \, k \neq m$ and  $s \in \{2,3\}$ depending on $c_2=0$ or $c_1=0$ respectively. We have to take in account that $\vert P \vert \neq 0$ and so there are possibilities that it can not be (those in which the structure matrix has a zero minor of order two). For example $p_{11}=p_{12}=p_{31}=p_{32}=0$ if $c_1=0$ is not available. Therefore there are nine possible change of basis matrices are the following:

\noindent
For $c_1=0$

$$P_{B'B}=
\begin{pmatrix}
0 & p_{12} &  0 \\
p_{21} & p_{22} & p_{23} \\
0 & 0 &  p_{33}
\end{pmatrix}
$$

\noindent
with $p_{12}p_{21}p_{33} \neq 0 $.

$$P_{B'B}=
\begin{pmatrix}
0 & p_{12} &  0 \\
p_{21} & p_{22} & p_{23} \\
p_{31} & 0 & 0
\end{pmatrix}
$$

\noindent
with $p_{12}p_{23}p_{31} \neq 0 $.

$$P_{B'B}=
\begin{pmatrix}
0 & 0 &  p_{13} \\
p_{21} & p_{22} & p_{23} \\
0 & p_{32} & 0
\end{pmatrix}
$$

\noindent
with $p_{13}p_{21}p_{32} \neq 0 $.

$$P_{B'B}=
\begin{pmatrix}
0 & 0 &  p_{13} \\
p_{21} & p_{22} & p_{23} \\
p_{31} & 0 & 0
\end{pmatrix}
$$

\noindent
with $p_{13}p_{22}p_{31} \neq 0 $.

$$P_{B'B}=
\begin{pmatrix}
p_{11} & 0 & 0 \\
p_{21} & p_{22} & p_{23} \\
0 & p_{32} & 0
\end{pmatrix}
$$

\noindent
with $p_{11}p_{23}p_{32} \neq 0 $.

$$P_{B'B}=
\begin{pmatrix}
p_{11} & 0 & 0 \\
p_{21} & p_{22} & p_{23} \\
0 & 0  & p_{33}
\end{pmatrix}
$$

\noindent
with $p_{11}p_{22}p_{33} \neq 0 $.

\noindent
If $c_2=0$

$$P_{B'B}=
\begin{pmatrix}
0 & 0 &  p_{13} \\
0 & p_{22} & 0 \\
p_{31} & p_{32} &  p_{33}
\end{pmatrix},
$$

\noindent
where $p_{13}p_{22}p_{31} \neq 0$.

$$P_{B'B}=
\begin{pmatrix}
0 & 0 &  p_{13} \\
p_{21} & 0 & 0 \\
p_{31} & p_{32} &  p_{33}
\end{pmatrix}
$$

\noindent
with $p_{13}p_{21}p_{32} \neq 0$.

$$P_{B'B}=
\begin{pmatrix}
0 &   p_{12} & 0 \\
0 & 0 & p_{23} \\
p_{31} & p_{32} &  p_{33}
\end{pmatrix}
$$

\noindent
with $p_{12}p_{23}p_{31} \neq 0$.

$$P_{B'B}=
\begin{pmatrix}
0 &   p_{12} & 0 \\
p_{21} & 0 & 0 \\
p_{31} & p_{32} &  p_{33}
\end{pmatrix}
$$

\noindent
for $p_{12}p_{21}p_{33} \neq 0$.

$$P_{B'B}=
\begin{pmatrix}
p_{11} &   0 & 0 \\
0 & p_{22} & 0 \\
p_{31} & p_{32} &  p_{33}
\end{pmatrix}
$$

\noindent
with $p_{11}p_{22}p_{33} \neq 0$.

$$P_{B'B}=
\begin{pmatrix}
p_{11} &   0 & 0 \\
0 & 0 & p_{23} \\
p_{31} & p_{32} &  p_{33}
\end{pmatrix}
$$

\noindent
with $p_{11}p_{23}p_{32} \neq 0$.
\medskip

\noindent
{\bf Case 5.3} $c_1c_2 \neq0$.
\medskip

\noindent
Fix  $i,j \in \{1,2,3\}$ with $i\neq j$ and such that $p_{1i}=p_{1j}=0$.
\medskip

\noindent
{\bf Case 5.3.1} $p_{2i}=0$ ($p_{2j}=0)$.
\medskip

\noindent
Then as $p_{3i}$ ($p_{3j})$ can not be zero, necessarily  $p_{3j}=p_{3k}=0$ ($p_{3i}=p_{3k}=0$) with $k \in \{1,2,3\}$ and $k \neq i,j$. Therefore we have that $p_{2k}=0$ because $p_{2j}$ ($p_{2i}$) is not possible to be zero.
Therefore $p_{1i}p_{2j}p_{3k}\neq 0$ with $i,j,k \in \{1,2,3\}$ and $i \neq j \neq k$. So, in this case the elements of $S_3 \rtimes (\K^\times)^3$ are the change of basis matrices.
\medskip

\noindent
{\bf Case 5.3.2} $p_{2i}p_{2j} \neq 0$.
\medskip

\noindent
We claim that $p_{2k}$ must be zero with $k \in \{1,2,3\}$ and $k \neq i \neq j$. Indeed, if $p_{2k}\neq 0$ then $p_{3s} \neq 0$ for every $s \in \{1,2,3\}$. Then as $p_{2i}=-\dfrac{p_{3i}p_{3j}c_2}{p_{2j}c_1}$ we have that $p_{3k}=\dfrac{p_{2k}p_{3j}}{p_{2j}}$. And finally we obtain that $p_{2j}^2c_1+ p_{3j}^2c_2 =0$, contradicting $\vert P \vert \neq 0$.

So $p_{2k}=0$. As   $p_{2i}p_{2j} \neq 0$ necessarily $p_{3k}=0$. There are three possible change of basis matrices.

$$P_{B'B}=
\begin{pmatrix}
0 &  0 & p_{13} \\
p_{21} & -\dfrac{p_{31}p_{32}c_2}{p_{21}c_1} & 0  \\
p_{31} & p_{32} &  0
\end{pmatrix}
$$

\noindent
with $p_{13}p_{21}p_{31}p_{32} \neq 0$ and $p_{21}^2 c_1 + p_{31}^2c_2 \neq 0$.

$$P_{B'B}=
\begin{pmatrix}
0 &  p_{12} & 0 \\
p_{21} & 0 & -\dfrac{p_{31}p_{33}c_2}{p_{21}c_1}   \\
p_{31} & 0 & p_{33}
\end{pmatrix}
$$

\noindent
for $p_{12}p_{21}p_{31}p_{33} \neq 0$ and $p_{21}^2 c_1 + p_{31}^2c_2 \neq 0$.

$$P_{B'B}=
\begin{pmatrix}
p_{11} &  0 & 0  \\
0 & p_{22} & -\dfrac{p_{32}p_{33}c_2}{p_{22}c_1}   \\
0 & p_{32} & p_{33}
\end{pmatrix},
$$

\noindent
where $p_{11}p_{22}p_{32}p_{33} \neq 0$ and $p_{22}^2 c_1 + p_{32}^2c_2 \neq 0$.

\section[B]{}\label{apendiceb}
\includegraphics[page=1, width=\textwidth, height=\textheight]{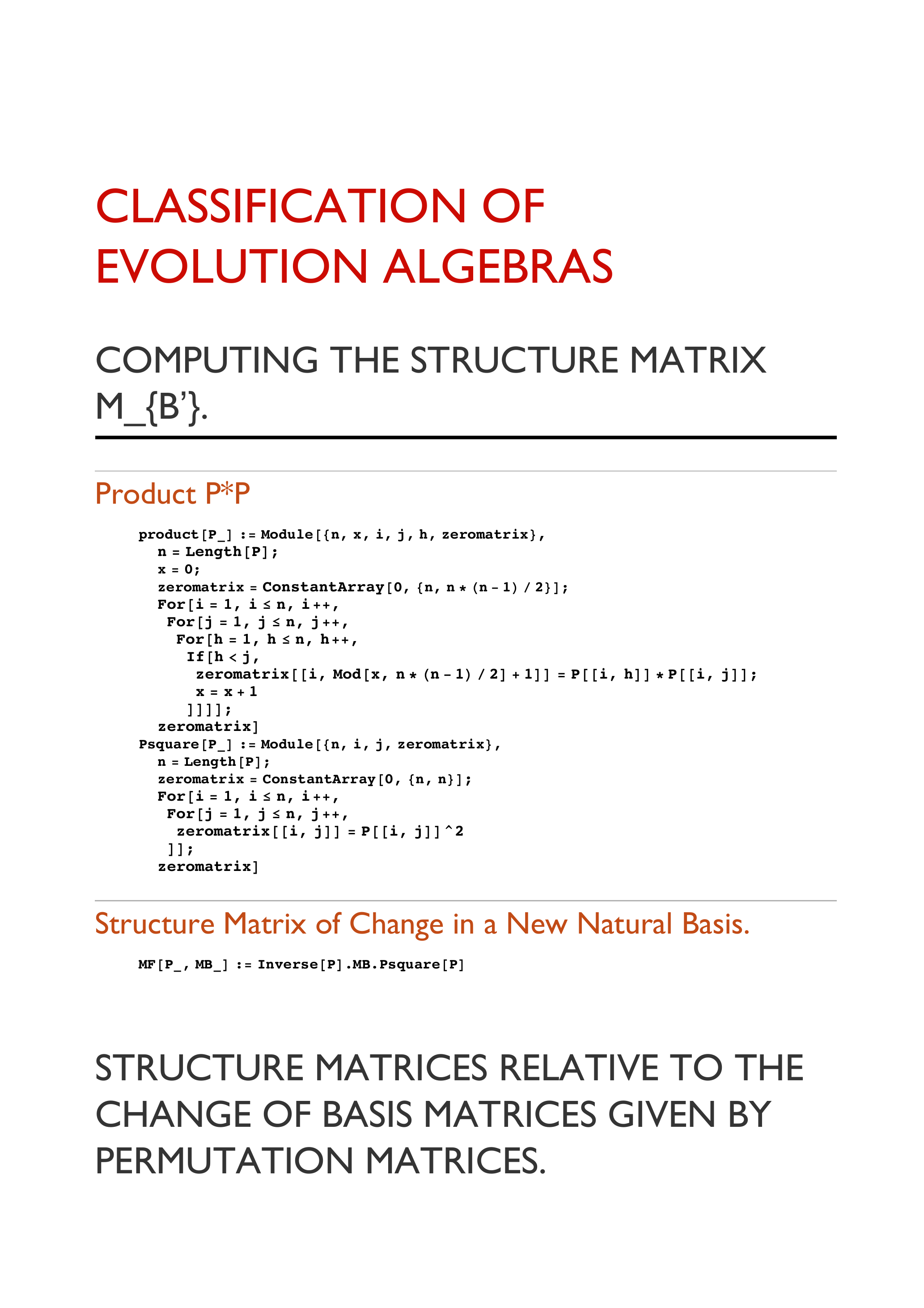}
\newpage

\includegraphics[page=2, width=\textwidth, height=\textheight]{isomorphismprogram.pdf}
\newpage

\includegraphics[page=3, width=\textwidth, height=\textheight]{isomorphismprogram.pdf}
\end{appendix}

\section*{Acknowledgments}
The three authors are supported by the Junta de Andaluc\'{\i}a and Fondos FEDER, jointly, through projects FQM-199 (the third author) and FQM-336 and FQM-7156 (the first and second authors). The two first authors are also supported by the Spanish Ministerio de Econom\'ia y Competitividad and Fondos FEDER, jointly, through project  MTM2013-41208-P.

The authors would like to thank all the participants in the M\'alaga permanent seminar ``Estructuras no asociativas y \'algebras de caminos de Leavitt" for useful discussions during the preparation of this paper, in particular to Prof. C\'andido Mart\'\i n Gonz\'alez for his helpful comments.

\end{document}